\documentclass[11pt,a4paper]{article}
\usepackage[T1]{fontenc}
\usepackage[utf8]{inputenc}
\usepackage{lmodern}
\usepackage{amsmath,amssymb,amsthm,mathtools,mathrsfs}
\usepackage{booktabs,tabularx,array,enumitem}
\usepackage{microtype,xurl}
\usepackage[margin=0.88in]{geometry}
\usepackage{xcolor,cite,caption,needspace}
\definecolor{linkblue}{RGB}{26,52,108}
\usepackage[colorlinks=true,linkcolor=linkblue,citecolor=linkblue,urlcolor=linkblue]{hyperref}
\usepackage[nameinlink,noabbrev]{cleveref}
\hypersetup{pdftitle={Gradient--Hessian Query Tradeoffs: Matching Lower Bounds in Logarithmic Dimension},pdfauthor={},pdfsubject={Randomized exact-oracle lower bounds for nonconvex stationarity},pdfkeywords={nonconvex optimization, oracle complexity, gradient queries, Hessian queries, lower bounds}}
\allowdisplaybreaks[1]
\newtheorem{theorem}{Theorem}[section]
\newtheorem{lemma}[theorem]{Lemma}
\newtheorem{proposition}[theorem]{Proposition}
\newtheorem{corollary}[theorem]{Corollary}
\theoremstyle{definition}

\theoremstyle{remark}

\newcolumntype{Y}{>{\raggedright\arraybackslash}X}
\newcommand{\R}{\mathbb R}\newcommand{\N}{\mathbb N}
\newcommand{\E}{\mathbb E}\newcommand{\PP}{\mathbb P}
\newcommand{\eps}{\varepsilon}
\newcommand{\norm}[1]{\left\lVert#1\right\rVert}
\newcommand{\ip}[2]{\langle #1,#2\rangle}
\newcommand{\op}{\mathrm{op}}\newcommand{\Frob}{\mathrm F}

\DeclareMathOperator{\Lip}{Lip}
\DeclareMathOperator{\tr}{tr}\DeclareMathOperator{\Var}{Var}
\DeclareMathOperator{\Ent}{Ent}\DeclareMathOperator{\spec}{spec}
\DeclareMathOperator{\diag}{diag}\DeclareMathOperator{\range}{range}
\title{\textbf{Gradient--Hessian Query Tradeoffs:\\Matching Lower Bounds in Logarithmic Dimension}}
\newcommand{\authornames}{Wendao Wu, Haihan Zhang, Chenheng Zhang, Yanyi Li, Chunyuan Zheng, Cong Fang, Haoxuan Li, Zhouchen Lin}
\newcommand{\affiliation}{Peking University}
\newcommand{\contactemail}{colombo.2026@outlook.com}
\newcommand{\mail}[1]{\href{mailto:#1}{\texttt{#1}}}
\newif\ifanonymous
\anonymousfalse
\ifanonymous
\author{Anonymous Authors}
\hypersetup{pdfauthor={Anonymous Authors}}
\else
\author{Wendao Wu$^{1,*}$\quad Haihan Zhang$^{1,*}$\quad Chenheng Zhang$^{1,*}$\\[0.20em]
Yanyi Li$^{1,*}$\quad Chunyuan Zheng$^1$\\[0.20em]
Cong Fang$^{1,\dagger}$\quad Haoxuan Li$^{1,\dagger}$\quad Zhouchen Lin$^{1,\dagger}$\\[0.60em]
\small $^1$\affiliation\\[0.35em]
\footnotesize\mail{\contactemail}\\[0.35em]
\small $^*$Equal contribution.\quad $^\dagger$Corresponding authors.}
\hypersetup{pdfauthor={\authornames}}
\fi
\date{}

\begin{document}
\maketitle
\begin{abstract}
We establish matching lower bounds for the tradeoff between gradient and
Hessian queries in nonconvex optimization with Lipschitz Hessian. In
normalized units, the initial objective gap is at most $T$, the Hessian
Lipschitz constant is at most one, and the target gradient norm is one.
For sufficiently large $T$, dimension $d\ge C\log(2+T)$, and a prescribed
Hessian budget $1\le h$, $h+1\le aT$, every randomized algorithm requires
\[
 \Omega\!\left(\min\left\{Td^{1/3},
                 \frac{T^{3/2}}{\sqrt{h+1}}\right\}\right)
\]
ordinary queries. An ordinary query returns the value and gradient; a
Hessian query returns the complete second jet and is charged only to its
own budget. Both budgets are worst-case caps. Queries and outputs may be
arbitrary real points, and no Hessian-norm bound is imposed. The hard input
law consists of fixed, globally $C^3$, coercive functions. The construction
combines sequential readouts of a continuous random matrix with localized
completion blocks and reliability dimensions that vary across hierarchy
levels. Premature disclosures are charged by their expected task cost,
while every oracle reply remains the exact jet of the presampled objective.
This yields a logarithmic dimension threshold and a depth-independent
third-derivative bound. Together with a translation of the NALEN\cite{CLLZ26} upper
bound, the result determines the query tradeoff up to absolute constants
throughout the stated range.
\end{abstract}
\noindent\textbf{Keywords:} nonconvex optimization; oracle complexity;
gradient--Hessian tradeoffs; randomized lower bounds; logarithmic dimension.

{\small
\begin{samepage}
\noindent\textbf{AI Usage.}
Nearly the entire research pipeline for this paper was carried out by
Colombo, our laboratory's internal auto-research system, powered by
GPT-5.6 Sol. The authors have reviewed and approved the
mathematical claims and presentation, and take responsibility for the
final manuscript. The system's technical report will be made public at
a later date.
\par
\end{samepage}
}

\paragraph{Publication Intent and Recognition of Independent Work.}

This preprint is released as a research output of our study of
automated scientific research, for public scrutiny and discussion.
As we do not intend to submit this manuscript to a journal or
conference, it has not undergone the full editorial refinement
and internal review we would normally require for submission.
The authors remain responsible for its contents and for correcting
any errors or inaccurate attribution.

We recognize that releasing these results may affect researchers
who have independently been pursuing the same questions, including
work that is not yet public.
If this applies to your work, we invite you to contact us at
\texttt{colombo.2026@outlook.com}.
A brief description of the overlap, together with any dated
materials you are comfortable sharing, would help us understand
its scope and chronology.
Such materials might include draft excerpts, presentation slides,
or revision records; a complete manuscript or unpublished proof
is not required to begin the conversation.

We will review these cases carefully and update citations,
acknowledgments, and accounts of independent or concurrent work
as appropriate, including revising priority claims where warranted.
Nonpublic materials will be treated confidentially and used only
to assess the overlap unless their authors authorize otherwise.
We will obtain explicit permission from the authors before
publicly discussing such materials or providing them to an
automated research system.

\section{Introduction}\label{sec:introduction}

Finding a point with a small gradient is a basic oracle problem in
nonconvex optimization. When the Hessian is Lipschitz, second-order
information can reduce the number of gradient evaluations, but obtaining
that information may be substantially more expensive. This leads to a
two-budget question: how many ordinary queries remain necessary when an
algorithm is allowed only a prescribed number of Hessian queries?

\Needspace{8\baselineskip}
Let $f:\R^d\to\R$ be bounded below, with
$f(x_0)-\inf f\le\Delta$ and $L_2$-Lipschitz Hessian. The target is
$\norm{\nabla f(x)}\le\eps$, and the dimensionless parameter
\[
 T=\frac{\Delta\sqrt{L_2}}{\eps^{3/2}}
\]
is the natural second-order complexity scale. Translation and rescaling
reduce the problem to $x_0=0$, $L_2=1$, and $\eps=1$, with initial
gap at most $T$. An ordinary query returns the value and gradient;
a rich query returns the value, gradient, and Hessian together.
We count these query types separately, so a rich query consumes no
ordinary query. Write $Q_g^*(h;d,T)$ for the least worst-case ordinary
budget that suffices with at most $h$ rich queries and success probability
at least $2/3$ on every fixed input.

Classical second-order theory identifies the linear scale $T$ when
Hessian access is unrestricted~\cite{NP06,CDHS20}. Recent upper bounds
describe how gradient complexity increases as Hessian access becomes
scarce~\cite{adil,CLLZ26}. In particular, Chen, Liu, Luo, and
Zhang~\cite{CLLZ26} remove the logarithmic overhead in the nonconvex
upper bound using lazy Hessian updates. A total-query lower bound alone
does not determine this tradeoff: it does not identify which queries
must remain ordinary when the rich budget is prescribed. Moreover, a
construction whose dimension grows rapidly with $T$ leaves open whether
the same obstruction persists in substantially fewer coordinates.

\paragraph{Our result.}
We prove that there are absolute constants $C,c,T_*>0$ and $0<a<1$
such that
\begin{equation}\label{eq:intro-main}
 Q_g^*(h;d,T)\ge
 c\min\left\{Td^{1/3},\frac{T^{3/2}}{\sqrt{h+1}}\right\}
\end{equation}
whenever $T\ge T_*$, $d\ge C\log(2+T)$, and
$1\le h$, $h+1\le aT$. The lower bound applies to randomized,
adaptive algorithms with arbitrary exact-real queries and arbitrary
outputs, including points that were never queried. There is no span,
radius, precision, or Hessian-norm restriction. The hard input
distribution is sampled before the private seed and consists of globally
$C^3$, coercive objectives. Thus the separate-budget lower tradeoff holds
already in logarithmic dimension.

\begin{table}[!t]
\caption{Previous upper bounds and the matching lower bounds of this work.
The two tradeoff panels compare bounds in the same dimension regime;
each bold row matches the Chen et~al. row directly above it.}
\label{tab:comparison}
\centering
\small
\setlength{\tabcolsep}{3pt}
\renewcommand{\arraystretch}{1.20}
\begin{tabularx}{\textwidth}{@{}
 >{\centering\arraybackslash}m{0.20\textwidth}
 >{\centering\arraybackslash}m{0.095\textwidth}
 >{\centering\arraybackslash}m{0.095\textwidth}
 >{\centering\arraybackslash}m{0.19\textwidth}
 >{\centering\arraybackslash}m{0.20\textwidth}
 >{\centering\arraybackslash}X@{}}
\toprule
Work & Oracle & \shortstack{Algorithm\\class}
     & Upper bound & Lower bound & \shortstack{Budget /\\dimension}\\
\midrule
\multicolumn{6}{@{}l}{\textit{Total-query benchmarks}}\\[4pt]
\shortstack{Nesterov--Polyak\\(2006)~\cite{NP06}}
& \shortstack{Second\\order} & Det.
& $\begin{gathered}G=0,\\H=O(T)\end{gathered}$
& --- & All $d$\\[7pt]
\shortstack{Carmon et~al.\\(2020)~\cite{CDHS20}}
& \shortstack{Second\\order} & Rand.
& ---
& $G+H=\Omega(T)$
& $d=O(T^2\mathcal L)$\newline suffices\\[4pt]
\midrule
\multicolumn{6}{@{}l}{\textit{Dimension-dependent regime:}
 $\boldsymbol{d\le(T/h)^{3/2}}$}\\[4pt]
\shortstack{Chen et~al.\\(2026)~\cite{CLLZ26}$^\dagger$}
& Gradient & Det.
& $G=O(Td^{1/3})$
& --- & $H=0$\\[7pt]
\shortstack{\textbf{This work}\\\textbf{(Theorem~\ref{thm:main})}}
& \textbf{Mixed} & \textbf{Rand.}
& --- & {\boldmath$G=\Omega(Td^{1/3})$}
& {\boldmath$H\le h$}\\[4pt]
\midrule
\multicolumn{6}{@{}l}{\textit{Hessian-budget-dependent regime:}
 $\boldsymbol{d\ge(T/h)^{3/2}}$}\\[4pt]
\shortstack{Adil et~al.\\(2025)~\cite{adil}}
& Mixed & Det.
& $G=\widetilde O\!\left(\frac{T^{3/2}}{\sqrt h}\right)$
& ---
& $H\le h$\\[7pt]
\shortstack{Chen et~al.\\(2026)~\cite{CLLZ26}$^\dagger$}
& Mixed & Det.
& $G=O\!\left(\frac{T^{3/2}}{\sqrt h}\right)$
& ---
& $H\le h$\\[7pt]
\shortstack{\textbf{This work}\\\textbf{(Theorem~\ref{thm:main})}}
& \textbf{Mixed} & \textbf{Rand.}
& --- & {\boldmath$G=\Omega\!\left(\frac{T^{3/2}}{\sqrt h}\right)$}
& {\boldmath$H\le h$}\\
\bottomrule
\end{tabularx}
\par\vspace{6pt}
\begin{minipage}{\textwidth}
\footnotesize
\textit{Common model and counts.}
All queries are exact and may be made anywhere in $\R^d$.
The Hessian is one-Lipschitz, the initial gap is at most $T$,
and the target gradient norm is one. No Hessian-norm bound is assumed.
$G$ counts ordinary value--gradient queries; $H$ counts joint
value--gradient--Hessian queries. A rich query is charged only to $H$.
Both are hard caps.
\par\smallskip
\textit{Comparison range and classes.}
Both tradeoff panels assume $T\ge T_*$, $d\ge C\mathcal L$,
$h\ge1$, and $h+1\le aT$, where $\mathcal L=\log(2+T)$ and $0<a<1$.
The total-query benchmarks require $T\ge1$; the Carmon et~al.
dimension is sufficient for their hard construction.
``Det.'' denotes a deterministic upper bound; ``Rand.'' means the lower
bound covers randomized algorithms. ``Mixed'' separates the two query
budgets; ``Gradient'' uses no Hessian calls.
\par\smallskip
$\dagger$~The two Chen et~al. rows are the mixed-budget and finite-difference
adaptations proved in \Cref{prop:upper-translation}.
In the dimension-dependent panel, $O(d+Td^{1/3})=O(Td^{1/3})$
because $d^{2/3}\le T/h\le T$; a gradient-only method is admissible
under the allowance $H\le h$.
Replacing $h+1$ by $h$ in the lower bound changes only an absolute constant.
Adil et~al.'s $\widetilde O$ hides polylogarithmic factors in $2+hT^2$.
\end{minipage}
\end{table}

\paragraph{How the lower bound matches previous work.}
The two terms in \Cref{eq:intro-main} correspond to two ways of using
the Chen et~al. upper bound~\cite{CLLZ26}, translated to our oracle
model in \Cref{prop:upper-translation}. The first uses only gradients
and costs $O(d+Td^{1/3})$; the second uses at most $h$ rich queries
and costs $O(T^{3/2}/\sqrt h)$. An algorithm with rich-query budget $h$
may choose either method, including one that uses no rich queries.

When $d\le(T/h)^{3/2}$, the dimension term is the smaller one.
Here $d^{2/3}\le T/h\le T$, so the initialization cost satisfies
$d\le Td^{1/3}$. The upper bound is therefore $O(Td^{1/3})$.
Our theorem gives $\Omega(Td^{1/3})$ in this same regime, even when
the algorithm is allowed $h$ rich queries. These are the adjacent upper
and lower rows in the dimension-dependent panel of \Cref{tab:comparison}.

When $d\ge(T/h)^{3/2}$, the mixed-query method gives the smaller rate,
$O(T^{3/2}/\sqrt h)$. Our theorem gives
$\Omega(T^{3/2}/\sqrt h)$ in that regime, as shown in the final two
rows of \Cref{tab:comparison}. Adil et~al.~\cite{adil} obtain the same
polynomial rate with a logarithmic overhead. For both comparisons,
$h\le h+1\le2h$ makes the $h+1$ in the exact lower-bound formula
equivalent to $h$ up to an absolute constant.

Choosing the better upper-bound method yields, for $1\le h\le T$,
\begin{equation}\label{eq:upper-envelope}
 Q_g^*(h;d,T)\le
 O\!\left(\min\left\{Td^{1/3},
                   \frac{T^{3/2}}{\sqrt h}\right\}\right).
\end{equation}
Thus \Cref{eq:intro-main,eq:upper-envelope} match up to absolute
constant factors throughout the range of \Cref{thm:main}.
The two bold table rows are consequences of this one lower bound
under their respective dimension conditions; they are not two
unconditional lower bounds. The total-query endpoint results at the
top of the table provide context, whereas the Chen et~al. rates provide
the matching upper bounds. The dimension threshold still grows with $T$,
so this comparison does not settle the fixed-dimensional regime.

\paragraph{Proof ingredients.}
A continuous rectangular-Wishart matrix supplies $M$ sequential marked
tasks within each numerical leaf. Every earlier polynomial readout of
the same matrix is synthesized and charged to the product budget.
An adaptive posterior estimate then limits the probability of completing
a new task before its ordinary-query lifetime. Smooth registers embed
these tasks into a globally defined scalar whose small-gradient points
certify completion.

The logarithmic dimension bound comes from the completion hierarchy.
At higher levels, hidden data enter through perturbations supported in
small balls around random centers, together with completion ports.
The ratio between the center radius and support width grows with height.
Only the first $O(\log(2+M))$ levels need register dimension
$O(\log(2+M))$; all subsequent levels use constant-dimensional
registers. Separate coordinates at different heights preserve the
activation argument, while the increasing physical scales keep the
third-derivative bound independent of depth.

To analyze unrestricted queries, we allow premature discoveries and
charge the tasks they disclose. A comparison oracle processes these
discoveries before returning the literal value, gradient, or full second
jet of the fixed objective. Local visibility windows and stopped tests
bound the expected number of tasks lost to such repairs, without requiring
every hidden block to remain undiscovered throughout the run. Combining
this charge with the numerical lifetimes gives a single task ledger for
both query budgets. For $n$ leaves, the resulting construction has
dimension $O(M^{3/2}+\log(2+n))$, gap $O(nM)$, and ordinary lower bound
$\Omega(nM^{3/2})$ whenever $h\le n/2$. Optimizing $M$ and $n$ gives
\Cref{eq:intro-main}.

\paragraph{Organization.}
\Cref{sec:related} discusses related work, and \Cref{sec:model}
states the model and the parameter theorem.
\Cref{sec:gaussian,sec:posterior,sec:matrix} establish the probabilistic
and matrix ingredients. \Cref{sec:leaf,sec:localized,sec:first-height,sec:hierarchy-log}
construct the objective, and \Cref{sec:regularity-log} proves its
depth-uniform regularity. \Cref{sec:repairs,sec:repair-cost,sec:ledger-log}
analyze the comparison oracle and query budgets; \Cref{sec:finish-log}
completes the parameter optimization. \Cref{sec:scope} discusses the
remaining regimes. The appendices justify the upper-bound comparison
and examine an obstruction to identifying adjacent hierarchy levels.
\section{Related Work}\label{sec:related}

\paragraph{Stationarity with second-order oracles.}
Nesterov and Polyak~\cite{NP06} established the $O(T)$ iteration scale
for cubic regularization under a Lipschitz-Hessian assumption. In our
joint-rich oracle convention, the method uses $O(T)$ rich queries,
including a final stationarity check, and no ordinary queries.
Carmon, Duchi, Hinder, and
Sidford~\cite[Theorem~2]{CDHS20} prove a randomized $\Omega(T)$ lower
bound even with access to all derivatives. Their randomized construction
can use dimension $O(T^2\log(2+T))$. These results identify the total
second-order scale. A constraint on $G+H$, however, does not yield the
dependence on $G$ in \Cref{eq:intro-main} when $H$ is prescribed.
The arbitrary-output convention changes the cited query lower bound by
at most one additional tested point.

\paragraph{Balancing gradient and Hessian queries.}
Doikov, Chayti, and Jaggi~\cite{DCJ23} study the cost reduction obtained
by reusing Hessians over several iterations. Adil, Bullins, Sidford,
and Zhang~\cite{adil} obtain the mixed-query upper bound in
\Cref{tab:comparison}; their Corollary~2 and Appendix~G handle approximate
Hessians without a Hessian-norm bound, including the logarithmic factors.
Chen, Liu, Luo, and Zhang~\cite[Theorem~4.1]{CLLZ26} subsequently give
the NALEN method. With Hessian refresh period $m$, its iteration bound
in our normalization is $O(m+Tm^{1/3})$, with a constant number of gradient
calls per iteration and one Hessian call per period. In their weighted
cost model, this yields $O(\bar d+T\bar d^{1/3})$ equivalent gradient
calls, where $\bar d=c_H/c_g$ is the Hessian-to-gradient cost ratio.
This removes the logarithmic overhead in the preceding upper bound.
Equivalent gradient cost is distinct from an algorithm making only
ordinary queries. \Cref{app:upper-translation} derives both a prescribed
Hessian cap and a finite-difference implementation using only gradients;
these are the two Chen et~al. entries in \Cref{tab:comparison}.
Our theorem gives the matching randomized lower tradeoff for
$d\ge C\log(2+T)$ and the stated nontrivial Hessian-budget range.

\paragraph{Convex optimization with lazy second-order oracles.}
Zhang, Chen, Liu, and Zhang~\cite{ZCLZ26} establish nearly matching
bounds for convex optimization with lazy second-order oracles, using
function-value accuracy and an initial-distance bound. Their deterministic
lower bounds address a different optimization criterion from the
randomized nonconvex stationarity problem considered here. Accordingly,
those rates are not included in the common stationarity comparison of
\Cref{tab:comparison}.

\paragraph{First-order lower bounds under additional smoothness.}
Carmon et~al.~\cite[Theorem~2]{CDHS21} also prove lower bounds for
deterministic first-order methods when both the gradient and Hessian
Lipschitz constants are controlled. With
$\kappa_1=L_1/\sqrt{L_2\eps}\ge1$, their $p=2$ bound is
$\Omega(T\kappa_1^{3/7})$. It concerns an additional smoothness parameter
and a different algorithm class. We therefore do not treat it as a
matching lower bound for randomized mixed-oracle algorithms on the class
used here. The present theorem prescribes no global $L_1$ bound.

\paragraph{Adaptive matrix information and smooth embeddings.}
The rectangular-Wishart deferred-decision argument of Zhang et~al.
\cite[Lemma~4.1 and Theorem~4.6]{matvec} is a technical antecedent of
our matrix source. Their posterior overlap estimate concerns a rank-$k$
output projector. \Cref{lem:deferred,lem:overlap} give the factorization
and an arbitrary-output-rank version needed here, with complete proofs.
The chronological fork charges all earlier polynomial readouts of one
matrix. The optimization construction then embeds the sequential matrix
tasks into a fixed smooth scalar and accounts jointly for ordinary and
rich queries. Localized completion blocks, a reliability dimension that
varies by hierarchy level, and a charge for prematurely disclosed tasks
control the dimension of this embedding. Matrix approximation is a
different oracle task, so its query rates are not listed as stationarity
bounds in \Cref{tab:comparison}.
\section{Oracle Model and Main Results}\label{sec:model}
For integers $d\ge1$ and reals $T\ge0$, define
\[
 \mathcal F_d(T)=\{f\in C^2(\R^d):\inf_{\R^d}f> -\infty,
 \ f(0)-\inf f\le T,\ \Lip(\nabla^2f)\le1\}.
\]
All vector norms are Euclidean and matrix norms are operator norms.  An
ordinary query at $x$ returns $(f(x),\nabla f(x))$.  A rich query returns the
complete second jet $(f(x),\nabla f(x),\nabla^2f(x))$ and is charged only to
its rich budget.  Both query caps hold pathwise on every execution.  The
algorithm may be randomized and adaptive; its private seed is independent of
the fixed input.  Queries and the final output are arbitrary exact-real
vectors, and the output need not have been queried. Policies, query
selections, and outputs are measurable. Internal computation on returned
data is uncharged and is represented by a sequence of measurable steps;
its measurable termination time is finite on every legal input, without
a uniform running-time bound. Finite parameter-dependent advice independent
of the unknown input is allowed. A deterministic computation cutoff makes
the counterfactual simulations in the proof total; \Cref{sec:ledger-log}
removes this cutoff under the legal input law. Both original query caps are also enforced on counterfactual
histories, with termination before any excess query.

Let $Q_g^*(h;d,T)$ be the least ordinary-query cap of an algorithm which,
for every fixed $f\in\mathcal F_d(T)$, terminates and returns $x$ satisfying
$\norm{\nabla f(x)}\le1$ with probability at least $2/3$ while using at most
$h$ rich queries.

\begin{theorem}[Main lower bound]\label{thm:main}
There are absolute constants $C,a,c,T_*>0$ such that for every real
$T\ge T_*$, every integer $d\ge C\log(2+T)$, and every integer $h\ge1$ with
$h+1\le aT$,
\begin{equation}\label{eq:main}
 \boxed{\quad Q_g^*(h;d,T)\ge
 c\min\left\{Td^{1/3},\frac{T^{3/2}}{\sqrt{h+1}}\right\}.\quad}
\end{equation}
The hard input distribution is sampled before the algorithm's private seed.
The theorem applies to unrestricted randomized algorithms, not only to
zero-respecting, span-restricted, or deterministic algorithms.
\end{theorem}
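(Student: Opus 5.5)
The plan is to reduce \Cref{thm:main} to a parametric statement about a single hard construction and then optimize its parameters. The parametric statement, as described in the introduction, is as follows. For integers $M\ge1$ and $n\ge2$ there is a distribution over globally $C^3$, coercive functions on $\R^{d_0}$ with
\[
 d_0\le C_0\bigl(M^{3/2}+\log(2+n)\bigr),\qquad f(0)-\inf f\le C_0 nM,\qquad \Lip(\nabla^2 f)\le1 .
\]
For this distribution, every randomized algorithm using at most $h\le n/2$ rich queries and succeeding with probability at least $2/3$ must use at least $c_0 nM^{3/2}$ ordinary queries.

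First I would assemble this parametric theorem from the components developed in the following sections. The matrix source supplies $M$ sequential tasks per leaf; each new task survives $\Omega(M^{1/2})$ ordinary queries unless a rich query is spent on that leaf, by the posterior overlap estimate and the deferred-decision lemma. The leaf and hierarchy constructions embed the $n$ leaves into one scalar in dimension $O(M^{3/2}+\log(2+n))$ with depth-independent third-derivative control. Finally, the repair/ledger sections charge premature disclosures by their expected task cost. A ledger count then shows that with $h\le n/2$ rich queries at least $n/2$ leaves must be solved by ordinary queries alone. Each such leaf costs $\Omega(M\cdot M^{1/2})$, which gives $\Omega(nM^{3/2})$ in expectation. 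Markov's inequality against the $2/3$ success probability converts this into a bound on the worst-case cap. Since the distribution is fixed before the seed, Yao's principle is not even needed beyond averaging.

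Second, padding. A hard function on $\R^{d_0}$ extends to $\R^d$ for $d\ge d_0$ by adding $\tfrac{1}{12}\sum_{i>d_0}\phi(x_i)$, where $\phi$ is coercive and $C^3$ with Lipschitz second derivative, has zero gap at the origin, and has gradient small only where it is irrelevant. Equivalently, the construction can be left independent of the extra coordinates, with coercivity restored by a term that never creates small gradients. I would take this padding term from the construction's own coercive envelope, so the main work is checking that it preserves the gap and Lipschitz bounds.

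Third, optimization of the parameters. Set $n=\max\{2(h+1),\,2\}$ and $M=\lfloor c_1T/n\rfloor$, so the gap $C_0nM$ is at most $T$. The requirement $h+1\le aT$ ensures $M\ge1$, and $T\ge T_*$ makes the floor harmless. Two cases follow.

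\emph{Hessian-budget regime.} If $C_0M^{3/2}\le d/2$, the dimension constraint holds, since $C_0\log(2+n)\le C_0\log(2+T)\le d/2$ once $C$ is large. The bound is then $nM^{3/2}\asymp n(T/n)^{3/2}=T^{3/2}/\sqrt{n}\asymp T^{3/2}/\sqrt{h+1}$.

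\emph{Dimension regime.} Otherwise, decrease $M$ to $M'=\lfloor (d/(2C_0))^{2/3}\rfloor\ge1$ and increase $n$ to $n'=\lfloor c_1T/M'\rfloor$, which still satisfies $n'\ge n\ge2(h+1)$. This keeps the gap at most $T$, and $\log(2+n')\le\log(2+T)$ still fits within $d$. The bound becomes $n'M'^{3/2}\asymp TM'^{1/2}\asymp Td^{1/3}$.

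Taking the minimum over the two cases gives \eqref{eq:main}.

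The hard step is the parametric theorem itself, specifically the accounting showing that unrestricted queries, whether premature or placed at far-away points, cannot shortcut more than a constant fraction of tasks. This is the repair-cost ledger. I would first try to bound the expected number of disclosed tasks per query by $O(1)$ through the local visibility windows. The bound should hold uniformly over levels, because the ratio of center radius to support width grows with height, so a single query can see only boundedly many blocks. The total charge from disclosures is then at most a constant times the ordinary budget divided by $M^{1/2}$, which is absorbed into the lifetime bound.
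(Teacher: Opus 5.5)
Your argument is the paper's proof: invoke Theorem~\ref{thm:parameter} and take $M\asymp\min\{d^{2/3},T/(h+1)\}$ and $n\asymp T/M$. Your two cases are exactly the two branches of this minimum, and rounding $M$ to the admissible values $1+4096J^2$ with $J\ge J_0$, and $n$ to a power of eight, is absorbed by taking $a$ small and $T_*,C$ large.

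One caution about your closing heuristic, which your reduction does not actually rely on. An $O(1)$ expected disclosure per query would only give $\E Z=O(G)$. That is far above the needed $\E Z\le nM/256$ when $G\asymp nM^{3/2}$. The paper instead charges each hidden center its visibility window $C8^qM\ell$ times the small-ball hit probability $2\bigl(6/(K2^{q-2})\bigr)^{p_q-1}$. This beats the factor $8^qM\ell$ for two reasons: $p_q\asymp\log(2+M)$ below height $j_*\asymp\log(2+M)$, and above $j_*$, $p_q=8$ gives a $16^{-q}$ tail starting at $(2+M)^{-4}$. Note also that a query is tested against the centers at every visible height, not against boundedly many blocks.
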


We take $a<1$, decreasing it if necessary.
The proof proceeds through the following parameter theorem.  The symbol $L$
below denotes a finite hierarchy depth and is unrelated to a Hessian
Lipschitz coefficient.

\begin{theorem}[Uniform parameter construction]\label{thm:parameter}
There are absolute constants $C_E,C_d,c_0,J_0>0$ with the following
property.  Let $J\ge J_0$ be an integer, put
\[
 M=1+4096J^2,\qquad n=8^L\quad(L\in\N,\ L\ge1).
\]
If
\begin{equation}\label{eq:parameters}
 T\ge C_E nM,\qquad d\ge C_d\{M^{3/2}+\log(2+n)\},
\end{equation}
then for every integer $0\le h\le n/2$,
\begin{equation}\label{eq:parameter-lower}
 Q_g^*(h;d,T)\ge c_0nM^{3/2}.
\end{equation}
All constants are independent of $L,n,M,d,T,h$.
\end{theorem}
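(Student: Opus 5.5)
The plan is Yao's principle. I would build a random objective $F$ on $\R^d$, sampled before the private seed. Every realization should lie in $\mathcal F_d(T)$, and every algorithm using at most $h\le n/2$ rich queries and fewer than $c_0nM^{3/2}$ ordinary queries should output a point with $\norm{\nabla F(x)}\le1$ with probability below $2/3$, averaged over the law of $F$. The objective is a sum over $n=8^L$ leaves, organised in an $8$-ary completion hierarchy of depth $L$. Each leaf carries $M$ sequential tasks read from an independent continuous rectangular-Wishart matrix $W$ of size about $J\times M^{1/2}J$. Task $k$ asks for a polynomial readout of $W$ that is only available after tasks $1,\dots,k-1$ are solved. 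Smooth registers convert the task chain into a staircase of depth $\asymp M$ in function value. A point with small gradient must then certify completion of every task in some leaf, and of the hierarchy levels above it. This gives the gap $O(nM)\le T$ once $C_E$ is large.

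\textbf{Matrix step.} I would use \Cref{lem:deferred,lem:overlap}. The chronological fork treats all earlier polynomial readouts of $W$ as synthesized and charged to the product budget. The adaptive posterior overlap estimate, valid for arbitrary output rank, then shows the following: within one leaf, an algorithm making fewer than about $M^{1/2}$ ordinary queries between completing task $k-1$ and task $k$ completes task $k$ with small probability. So a leaf handled purely by ordinary queries costs $\Omega(M\cdot M^{1/2})=\Omega(M^{3/2})$ queries.

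\textbf{Regularity.} I would then verify $\Lip(\nabla^2F)\le1$ and global $C^3$ coercivity. The key device is the height-dependent geometry: level-$\ell$ perturbations sit in balls around random centres, and the ratio of centre radius to support width grows with $\ell$. This makes third-derivative contributions from different heights summable uniformly in $L$. The reliability (register) dimension is $O(\log(2+M))$ only on the first $O(\log(2+M))$ levels and constant afterwards. Hence the total dimension is $O(M^{3/2})$ from the Wishart blocks plus $O(\log(2+n))$ from the separate coordinates of the hierarchy levels, matching \eqref{eq:parameters}.

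\textbf{Accounting.} I would introduce a comparison oracle. It first processes premature discoveries: a query landing in a hidden block's visibility window discloses that block, and the disclosed tasks are declared lost. It then returns the literal jet of $F$, so the simulation is exact. Stopped tests and the smallness of the local windows bound the expected number of tasks lost this way by a small constant times the number of queries. A rich query reveals the full second jet, and I would argue it can complete at most one leaf's remaining task chain (roughly, one Hessian reveals $W$ locally). Thus $h\le n/2$ rich queries settle at most half the leaves. A stationary output then requires work on at least $n/2$ leaves, each of which has spent its ordinary-query lifetime or been prematurely charged. Summing the single task ledger gives
\[
 \E[\#\text{ordinary queries}]\gtrsim nM^{3/2}
\]
on any successful run. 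A Markov argument with the computation cutoff removed then yields \eqref{eq:parameter-lower}.

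\textbf{Main obstacle.} I expect the hardest step to be the repair charge. One must show that unrestricted real queries, including far-off or rich ones, disclose only $O(1)$ tasks in expectation per query, without assuming that the hidden blocks stay undiscovered. My first attempt would be a union bound over random centres using the small-ball probability of the windows at each height, controlled uniformly in depth by the growing radius ratio.
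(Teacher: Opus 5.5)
Your outline has the same architecture as the paper: a presampled law, sequential Wishart tasks per leaf controlled by \Cref{lem:deferred,lem:overlap} and the fork, a localized completion hierarchy with level-dependent register dimension, an exact comparison oracle with charged repairs, and the ledger \eqref{eq:road-ledger} closed by Markov. The gap is in the step you call the main obstacle. You aim it at the wrong bound, and at the size you state it the ledger yields nothing.

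Under the cap $G+1\le nM\ell/4$ with $h\le n/2$, success only forces $E_{\rm early}+Z\ge nM/4$. So you need $\E Z\le c\,nM$ for a small absolute $c$ (the paper proves $nM/256$), a small fraction of the number of \emph{tasks}. Your target of ``$O(1)$ tasks in expectation per query'' permits $\E Z$ of order $nM\ell\asymp nM^{3/2}$. That exceeds the total of $nM$ tasks by a factor $\sqrt M$. A per-query bound would have to be $O(1/\ell)$, and it would also have to absorb that one premature hit at height $q$ retires up to $8^qM$ tasks.

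The paper does not charge per query. A hidden parameter is tested only while its node lies on the root-to-current-leaf chain. That visibility window has a deterministic cap $C8^qM\ell$, fixed by the task timeouts and rich retirements and independent of the algorithm's total query count (\Cref{lem:window}). A shadow run with that parameter censored makes the stopped list of test points independent of it (\Cref{lem:stopped-local}). Hence its repair probability is at most the window times a fixed-point hit probability.

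The normalized charge is then $CM\ell\sum_q8^q\theta_q(p_q)$ for centers, with prefactors $k_1M\ell$ and $M^2\ell$ for memory directions and unprovided marks (\Cref{thm:repair-cost}). Hit probabilities must therefore be polynomially small in $M$, not merely summable in depth. This is exactly what forces three choices:
\begin{enumerate}[label=\textup{(\roman*)},leftmargin=*]
\item $p_{\rm low}\asymp\log(2+M)$ on the first $j_*\asymp\log(2+M)$ levels;
\item the dimension-$8$ tail starts only at $j_*$, so that \eqref{eq:fixed-tail} gives $(2+M)^{-4}$;
\item $r_A\asymp M$ for the unprovided-mark tests.
\end{enumerate}
A union bound ``controlled uniformly in depth by the growing radius ratio'' does not by itself produce these choices.

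Your sketch also relies on some structural points it does not state.
\begin{enumerate}[label=\textup{(\roman*)},leftmargin=*]
\item \textbf{The leaves are not summed.} All leaves share the coordinates $Z=(y,X)$, and each height shares its blocks $U_i^{(q)}$. The leaves are combined by nested stick-breaking weights that activate only the first unfinished child. Otherwise either the dimension grows like $nM^{3/2}$, or every leaf is active at every point and a rich reply carries information about all matrices. With the selectors, later siblings sit behind flat-zero ports. That is why a normal rich reply contains only the current matrix (\Cref{lem:true-jet}).
\item \textbf{What a stationary point certifies.} A small gradient of the real scalar forces only the eight root ports to one. It certifies \emph{all} $n$ leaves, not ``every task in some leaf'', because after the repair loop an unretired root child would still have an undisclosed center and hence a flat-zero port (\Cref{thm:ledger-log}).
\item \textbf{Regularity.} The growing ratio $R_q/w_q$ serves the hitting probabilities. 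Depth-uniform third derivatives come from $w_q\asymp M3^q$ outgrowing the energies $A_q\asymp 8^qM$ (since $8<27$). The ratio enters only as the summable cost $R_q/b_{q-1}^3\asymp M^{-2}(2/9)^q$.
\end{enumerate}
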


Every leaf below contains $M$ sequentially disclosed marked tasks sharing
one continuous random matrix.  A normally completed marked task uses
$\ell\asymp\sqrt M$ ordinary slots.  A rich query may retire its current
leaf, but no unretired sibling leaf.  Prematurely discovered subtrees are
not forbidden; their marked tasks are charged to a repair variable $Z$.
The construction will imply the exact task ledger
\begin{equation}\label{eq:road-ledger}
 nM\le hM+(G+1)/\ell+E_{\rm early}+Z,
\end{equation}
where $\E E_{\rm early}\le nM/256$ and $\E Z\le nM/256$.  This is a
single-family joint mixed-oracle lower bound, not a combination of separate
first- and second-order endpoint instances.

\subsection{Conventions}
Symbols $C,c$, with subscripts when useful, denote positive absolute
constants whose values may change between displays.  Dependence on
$n,M,L,d,T,h$ is always displayed.  $D^qF$ denotes the $q$-linear derivative
tensor with Euclidean operator norm; order zero means absolute value.
$J^2F$ denotes value, gradient, and Hessian together.  A gate is called
flat at a point when its complete derivative jet of the indicated order is
exactly zero, not merely small.  Every objective used in the argument is a
finite explicit formula; no infinite-dimensional limit is used.
\section{Gaussian Preliminaries}\label{sec:gaussian}

\subsection{Integration by parts, concentration, and variance}
Let \(\gamma_m\) be standard Gaussian measure on \(\R^m\).
Integration by parts against its density gives
\begin{equation}\label{eq:gibp}
\E[Z_i u(Z)]=\E[\partial_i u(Z)]\qquad(Z\sim\gamma_m)
\end{equation}
for smooth functions of appropriate growth. Boundary terms vanish because
of the Gaussian density; truncation extends the identity when necessary.
For a centered Gaussian vector with covariance \(K\), a linear change of
variables gives \(\E Z_i u(Z)=\sum_jK_{ij}\E\partial_j u(Z)\).

\begin{lemma}[Gaussian concentration and Poincare inequality]\label[lemma]{lem:gauss-conc}
If \(f:\R^m\to\R\) is \(b\)-Lipschitz, then
\[
\E e^{\lambda(f(Z)-\E f(Z))}\le e^{\lambda^2b^2/2}
\quad(\lambda\in\R),\qquad
\PP\{|f(Z)-\E f(Z)|\ge t\}\le2e^{-t^2/(2b^2)}.
\]
Also \(\Var(f(Z))\le\E\norm{\nabla f(Z)}^2\) when the right side is defined
by smooth approximation. Consequently
\[
\sqrt{m-1}\le\E\norm Z\le\sqrt m.
\]
The lower bound is interpreted as zero when \(m=1\).
\end{lemma}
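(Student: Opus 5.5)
The three claims are classical, and I would prove them in the order stated, reducing everything to the one-dimensional or smooth case by standard approximation.

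\textbf{Sub-Gaussian bound.} For the exponential bound I would use the Maurey--Pisier interpolation argument.
\begin{enumerate}[label=(\roman*)]
\item First take $f$ smooth with $\norm{\nabla f}\le b$. The general Lipschitz case then follows by convolving with a small Gaussian mollifier. The mollified function keeps the Lipschitz constant $b$ and converges uniformly, and dominated convergence passes the bound to the limit.
\item Let $Z'$ be an independent copy of $Z$ and set $Z_\theta=Z\sin\theta+Z'\cos\theta$ for $\theta\in[0,\pi/2]$.
\item By Jensen in $Z'$, $\E e^{\lambda(f(Z)-\E f)}\le \E e^{\lambda(f(Z)-f(Z'))}$.
\item Write $f(Z)-f(Z')=\int_0^{\pi/2}\ip{\nabla f(Z_\theta)}{\dot Z_\theta}\,d\theta$. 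Apply Jensen again to the uniform average in $\theta$.
\item For each fixed $\theta$, the pair $(Z_\theta,\dot Z_\theta)$ consists of two independent standard Gaussians. Conditioning on $Z_\theta$ gives the bound $\exp(\lambda^2(\pi/2)^2b^2/2)$.
\end{enumerate}
This gives the constant $\pi^2/8$ instead of $1/2$. To reach the sharp constant $1/2$ as stated, I would use the Herbst argument instead, applied to the Gaussian log-Sobolev inequality. Alternatively, I would run the Ornstein--Uhlenbeck semigroup interpolation. Set $H(\lambda)=\E e^{\lambda f}$. The Gaussian integration by parts \eqref{eq:gibp} along the semigroup $P_t$ gives
$$\Ent(e^{\lambda f})\le \tfrac{\lambda^2 b^2}{2}\,H(\lambda).$$
This is equivalent to $\tfrac{d}{d\lambda}\bigl(\lambda^{-1}\log H(\lambda)\bigr)\le b^2/2$. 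Integrating from $0$, where the limit equals $\E f$, yields the moment generating function bound. The tail bound then follows from Markov's inequality with $\lambda=t/b^2$, applied to $\pm f$.

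\textbf{Poincar\'e inequality.} I would use the same semigroup.
\begin{itemize}
\item Write $\Var f=\int_0^\infty \E\norm{\nabla P_tf}^2\cdot 2\,dt$, which follows from $\frac{d}{dt}\E(P_tf)^2=-2\E\norm{\nabla P_tf}^2$ via \eqref{eq:gibp}.
\item Use the commutation $\nabla P_tf=e^{-t}P_t\nabla f$, so that $\norm{\nabla P_t f}^2\le e^{-2t}P_t\norm{\nabla f}^2$.
\item Integrating gives $\Var f\le\E\norm{\nabla f}^2$.
\end{itemize}
Alternatively, the interpolation above at second order gives the same bound.

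\textbf{Norm bounds.} The upper bound is Jensen: $\E\norm Z\le(\E\norm Z^2)^{1/2}=\sqrt m$. For the lower bound, apply Poincar\'e to $f=\norm Z$, which has $\norm{\nabla f}=1$ almost everywhere, and approximate by $\sqrt{\norm Z^2+\delta}$. This gives $\Var\norm Z\le 1$. Hence
$$(\E\norm Z)^2=m-\Var\norm Z\ge m-1.$$
When $m=1$ the bound reads zero.

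\textbf{Main obstacle.} The only delicate point is obtaining the sharp constant $1/2$ in the exponent. I would handle it through log-Sobolev and Herbst, together with the approximation justifications: regularity of $f$, and finiteness of $H$ guaranteed a priori by the Lipschitz growth. These are routine.
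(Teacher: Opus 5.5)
Your proposal is correct and follows the paper's route once you set aside the Maurey--Pisier detour, which you rightly note gives only the constant $\pi^2/8$: the Ornstein--Uhlenbeck semigroup yields the Gaussian log-Sobolev inequality, Herbst's differential inequality for $\lambda^{-1}\log\E e^{\lambda f}$ gives the moment bound, exponential Markov gives the tail, and Jensen plus the variance bound give $\E\norm Z$. The one deviation is that you derive the Poincar\'e inequality directly from the semigroup identity $\Var f=2\int_0^\infty\E\norm{\nabla P_tf}^2\,dt$, whereas the paper linearizes the entropy inequality at $g=1+u(f-\E f)$; both are standard and equally valid.
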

\begin{proof}
We provide the entropy derivation. For a bounded smooth positive function
\(g\) bounded away from zero, set
\[
P_tg(x)=\E g(e^{-t}x+\sqrt{1-e^{-2t}}Z).
\]
Direct differentiation and \eqref{eq:gibp} show that its generator is
\(\mathscr L=\Delta-x\cdot\nabla\), that \(\gamma_m\) is invariant, and
\(\nabla P_tg=e^{-t}P_t(\nabla g)\). Differentiating
\(\E[(P_tg)\log(P_tg)]\), integrating \(\mathscr L\) by parts, and letting
\(t\to\infty\) therefore give
\[
\Ent_{\gamma_m}(g)
=\int_0^\infty\E\frac{\norm{\nabla P_tg}^2}{P_tg}\,dt
\le\int_0^\infty e^{-2t}
 \E P_t\left(\frac{\norm{\nabla g}^2}{g}\right)dt
=\frac12\E\frac{\norm{\nabla g}^2}{g}.
\]
The inequality is weighted Cauchy--Schwarz inside \(P_t\); the limit follows
by dominated convergence. Apply this to \(g=e^{\lambda f}\), first for
bounded smooth \(f\). If \(H(\lambda)=\log\E e^{\lambda f}\), then
\(\lambda H'(\lambda)-H(\lambda)\le\lambda^2b^2/2\).
Integration of \((H(\lambda)/\lambda)'\), starting at zero, gives
\(H(\lambda)\le\lambda\E f+\lambda^2b^2/2\). Use exponential Markov with
\(\lambda=t/b^2\), and apply the same result to \(-f\).

Expanding the entropy inequality at \(g=1+u(f-\E f)\), for bounded \(f\)
and small \(u\), gives
\(\Var f\le\E\norm{\nabla f}^2\). Smooth truncations and mollifications
prove the stated extensions. A Lipschitz function has at most linear growth,
so its Gaussian exponential moments justify these limits. Finally Jensen
gives \(\E\norm Z\le\sqrt m\), while the variance bound for the norm gives
\(m-(\E\norm Z)^2\le1\).
\end{proof}

\subsection{A Gaussian comparison sufficient for singular values}
We include a finite-array proof rather than importing a min--max theorem.
For independent centered Gaussian arrays \(X,Y\) and a smooth function \(F\),
\eqref{eq:gibp} applied to \(\sqrt tX+\sqrt{1-t}Y\) yields
\begin{equation}\label{eq:interpolation}
\frac d{dt}\E F(\sqrt tX+\sqrt{1-t}Y)
=\frac12\sum_{a,b}(\operatorname{Cov}(X_a,X_b)-
 \operatorname{Cov}(Y_a,Y_b))\E\partial_{ab}F.
\end{equation}
This follows by differentiating the two square-root factors and integrating
the factors \(X_a,Y_a\) by parts. A limiting argument covers degenerate
covariances.

\begin{lemma}[Gaussian extreme singular values]\label[lemma]{lem:singular}
Let \(G\) be an \(a\)-by-\(b\) standard Gaussian matrix, \(a\ge b\ge1\).
For \(t\ge0\),
\[
\PP\{s_{\max}(G)>\sqrt a+\sqrt b+t\}\le e^{-t^2/2},
\quad
\PP\{s_{\min}(G)<\sqrt{a-1}-\sqrt b-t\}\le e^{-t^2/2}.
\]
\end{lemma}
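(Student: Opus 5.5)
The plan is to write both extreme singular values as min--max (or max--min) of a Gaussian process, compare that process with a simpler one via the interpolation identity \eqref{eq:interpolation}, and then upgrade the resulting bounds on expectations to tail bounds with \Cref{lem:gauss-conc}.

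\textbf{Concentration step.} The maps \(G\mapsto s_{\max}(G)\) and \(G\mapsto s_{\min}(G)\) are \(1\)-Lipschitz with respect to the Frobenius norm on the \(ab\) Gaussian entries. This is Weyl's inequality \(|s_i(G)-s_i(G')|\le\norm{G-G'}_{\op}\le\norm{G-G'}_\Frob\). By \Cref{lem:gauss-conc}, the one-sided tail \(\PP\{s_{\max}>\E s_{\max}+t\}\le e^{-t^2/2}\) follows from the exponential moment bound and Markov's inequality, without the factor \(2\). The same holds for \(s_{\min}\) on the lower side. It therefore suffices to prove
\[
\E s_{\max}(G)\le\sqrt a+\sqrt b,
\qquad
\E s_{\min}(G)\ge\sqrt{a-1}-\sqrt b .
\]

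\textbf{Expectation bounds.} Write \(s_{\max}=\max_{u\in S^{a-1},v\in S^{b-1}}\ip{u}{Gv}\) and \(s_{\min}=\min_{v}\max_{u}\ip{u}{Gv}\). Compare the process \(X_{u,v}=\ip{u}{Gv}\) with \(Y_{u,v}=\ip{g}{u}+\ip{g'}{v}\), where \(g\sim\gamma_a\) and \(g'\sim\gamma_b\) are independent. A direct computation gives
\[
\E(X_{uv}-X_{u'v'})^2-\E(Y_{uv}-Y_{u'v'})^2=-2(1-\ip u{u'})(1-\ip v{v'})\le0 ,
\]
with equality whenever \(v=v'\). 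For the maximum, apply \eqref{eq:interpolation} on a finite net to a soft-max \(F_\beta(x)=\beta^{-1}\log\sum e^{\beta x_i}\). Its Hessian has nonnegative off-diagonal entries and zero row sums, so the sign of the covariance differences yields \(\E\max X\le\E\max Y\) (Sudakov--Fernique). Since \(\E\max Y=\E\norm g+\E\norm{g'}\le\sqrt a+\sqrt b\) by \Cref{lem:gauss-conc}, this gives the upper bound.

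For the minimum, apply the Gordon min--max version. Use the smooth function \(F=-\beta^{-1}\log\sum_i\bigl(\sum_j e^{\beta x_{ij}}\bigr)^{-1}\), a soft version of \(\min_i\max_j\), with the same comparison. The required sign conditions are: the second derivative is \(\ge0\) for pairs in the same row \(i\) (same \(v\)), and \(\le0\) across rows. These match the signs of the covariance differences: zero difference when \(v=v'\), and \(\le0\) otherwise, where \(\partial_{ab}F\le0\). This yields \(\E\min_v\max_u X\ge\E\min_v\max_u Y=\E\norm g-\E\norm{g'}\ge\sqrt{a-1}-\sqrt b\). The limit \(\beta\to\infty\) and net refinement follow by dominated convergence, using \(\E\norm G_\Frob<\infty\).

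\textbf{Main obstacle.} The hard step is Gordon's inequality: verifying the mixed sign pattern of \(\partial_{ab}F\) for the soft min--max. The sign must be nonnegative within a row and nonpositive across rows, and it must be matched exactly to the covariance differences. One also has to handle the zero-variance diagonal terms; the identity \eqref{eq:interpolation} involves all index pairs \((a,b)\), including \(a=b\). Using the variance-increment form avoids the diagonal issue, since the differences vanish there. I would check this by computing \(\partial_{ab}F\) explicitly for the soft min--max.
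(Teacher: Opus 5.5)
Your plan is the paper's proof. One-sided Lipschitz concentration from \Cref{lem:gauss-conc} reduces the lemma to $\E s_{\max}\le\sqrt a+\sqrt b$ and $\E s_{\min}\ge\sqrt{a-1}-\sqrt b$. These follow from Sudakov--Fernique and Gordon comparisons of $\ip u{Gv}$ with $\ip gu+\ip{g'}v$, proved from \eqref{eq:interpolation} using a soft max and a soft min--max on finite nets, with dominated convergence at the end.

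The one real difference is how the comparison is fed into \eqref{eq:interpolation}. The paper adds a common scalar $z$ so that both processes have variance two. Its covariance differences $(1-u_i^Tu_k)(1-v_j^Tv_l)\ge0$ then enter the identity directly. You instead use increment variances. That needs one extra observation: $F(x+c\mathbf 1)=F(x)+c$ for both soft functions, so $\nabla^2F$ has zero row sums. This gives $\sum_{a,b}\Delta C_{ab}\,\partial_{ab}F=-\tfrac12\sum_{a,b}\Delta\gamma_{ab}\,\partial_{ab}F$, where $\Delta\gamma_{ab}=\E(X_a-X_b)^2-\E(Y_a-Y_b)^2$. Either route works.

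However, the sign facts you give for the step you call the main obstacle are reversed in both cases.
\begin{itemize}
\item For the soft max, $\partial_{ab}F_\beta=-\beta p_ap_b\le0$ off the diagonal. The diagonal is $\beta p_a(1-p_a)>0$ and rows sum to zero, so the off-diagonal entries cannot be nonnegative.
\item For your soft min--max, let $p_i$ be the soft-min weights over rows and $q_{ij}$ the soft-max weights within row $i$. Then $\partial_{x_{ij}}F=p_iq_{ij}$.
\begin{itemize}
\item Across rows ($i\ne k$): $\partial^2F/\partial x_{ij}\partial x_{kl}=\beta p_ip_kq_{ij}q_{kl}\ge0$.
\item Within a row ($j\ne l$): the entry is $\beta p_iq_{ij}q_{il}(p_i-2)\le0$.
\end{itemize}
This is the opposite of what you wrote.
\end{itemize}

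With the correct signs and the factor $-\tfrac12$,
\[
\tfrac{d}{dt}\E F(\sqrt tX+\sqrt{1-t}Y)=-\tfrac14\sum_{a\ne b}\Delta\gamma_{ab}\,\E\partial_{ab}F .
\]
For the soft max this is $\le0$, since $\Delta\gamma\le0$ and $\partial_{ab}F\le0$. For the soft min--max it is $\ge0$: within-row $\Delta\gamma=0$, while across rows $\Delta\gamma\le0$ and $\partial F\ge0$. These give exactly your stated inequalities $\E\max X\le\E\max Y$ and $\E\min\max X\ge\E\min\max Y$.

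With the signs as you wrote them, the same identity yields the reverse inequalities. Your conclusions come out right only if the $-\tfrac12$ is also dropped, that is, if increment differences are treated as covariance differences. So the verification would not close as described. Correcting the two Hessian sign claims, which is exactly the explicit computation you planned, completes the proof.
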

\begin{proof}
Take finite subsets \(\{u_i\}\subset S^{b-1}\) and
\(\{v_j\}\subset S^{a-1}\). With independent standard Gaussians
\(z\in\R\), \(g\in\R^a\), \(h\in\R^b\), define
\[
X_{ij}=v_j^TGu_i+z,\qquad Y_{ij}=g^Tv_j+h^Tu_i.
\]
Both variances are two and
\[
\operatorname{Cov}(X_{ij},X_{kl})-
\operatorname{Cov}(Y_{ij},Y_{kl})
=(1-u_i^Tu_k)(1-v_j^Tv_l)\ge0.
\]
This difference is zero for \(i=k\).
For \(r_i(x)=\beta^{-1}\log\sum_j e^{\beta x_{ij}}\), put
\(F(x)=-\alpha^{-1}\log\sum_i e^{-\alpha r_i(x)}\).
Its mixed second derivatives between different rows are
\(\alpha p_i p_k q_{ij}q_{kl}\ge0\), where \(p,q\) are the corresponding
soft-min and soft-max probabilities. Thus \eqref{eq:interpolation} gives
\(\E F(X)\ge\E F(Y)\). Sending \(\alpha,\beta\to\infty\) gives
\[
\E\min_i\max_jv_j^TGu_i
\ge\E\max_jg^Tv_j+\E\min_i h^Tu_i.
\]
The common additive \(z\) has mean zero and cancels. Dense finite nets of
the compact spheres give
\(\E s_{\min}(G)\ge\E\norm g-\E\norm h\).
The convergence is dominated, for example by
\(\norm G_{\Frob}+\norm g+\norm h\).

For a single soft maximum over all pairs, its off-diagonal Hessian entries
are nonpositive and its diagonal covariance differences are zero.
Equation~\eqref{eq:interpolation} instead gives
\(\E\max_{i,j}X_{ij}\le\E\max_{i,j}Y_{ij}\).
Dense nets imply \(\E s_{\max}(G)\le\E\norm g+\E\norm h\).
Lemma~\ref{lem:gauss-conc} bounds these means by
\(\sqrt{a-1}-\sqrt b\) and \(\sqrt a+\sqrt b\), respectively.
Both extreme singular values are one-Lipschitz in Frobenius norm:
use \(|\norm{Gu}-\norm{Hu}|\le\norm{G-H}_{\op}\) and take the appropriate
minimum or maximum over unit \(u\). Their one-sided concentration now proves
the result.
\end{proof}

\subsection{Chi-square, spherical, and finite-net estimates}
\begin{lemma}\label[lemma]{lem:sphere}
For \(Z\sim N(0,I_m)\) and \(0<u<1\),
\[
\PP\{|\norm Z^2/m-1|>u\}\le2e^{-mu^2/8}.
\]
If \(v\) is uniform on \(S^{m-1}\), then for every fixed \(y\),
\begin{equation}\label{eq:sphere}
\PP\{|v^Ty|\ge t\}\le2\exp\{-mt^2/(2\norm y^2)\}.
\end{equation}
For independent uniform \(v_1,\ldots,v_m\in S^{m-1}\),
\begin{equation}\label{eq:column-sphere}
\PP\{\norm{[v_1\ \cdots\ v_m]}_{\op}>4\}\le C e^{-cm}.
\end{equation}
Every sphere \(S^{m-1}\) has a \(1/4\)-net of cardinality at most \(9^m\).
For a symmetric matrix \(A\), its norm is at most twice
\(\max_{v\text{ in the net}}|v^TAv|\).
\end{lemma}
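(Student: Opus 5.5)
The lemma has five claims, and I will prove them in order with elementary tools, reusing \Cref{lem:gauss-conc} and \Cref{lem:singular} where they apply.

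\emph{Chi-square bound.} I will use the exponential moment $\E e^{\lambda(Z_i^2-1)}=e^{-\lambda}(1-2\lambda)^{-1/2}$ for $|\lambda|<1/2$. For $|\lambda|\le1/4$ this is at most $e^{2\lambda^2}$; for $\lambda<0$ the bound $e^{\lambda^2}$ even holds for every $\lambda<0$. By independence,
\[
\PP\{\norm Z^2-m>mu\}\le e^{2m\lambda^2-\lambda mu}.
\]
Choosing $\lambda=u/4\le1/4$ makes the right side $e^{-mu^2/8}$, and the lower tail is handled the same way. This is a routine Chernoff computation.

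\emph{Spherical tail.} I will write $v=Z/\norm Z$ and use rotation invariance to reduce to $y=\norm y e_1$, so it suffices to bound $\PP\{|v_1|\ge s\}$ with $s=t/\norm y$. The cleanest route is the exact identity $\PP\{|v_1|\ge s\}=\PP\{\mathrm{Beta}(\tfrac12,\tfrac{m-1}2)\ge s^2\}$. For $m\ge2$ this equals
\[
\frac{\int_s^1(1-r^2)^{(m-3)/2}dr}{\int_0^1(1-r^2)^{(m-3)/2}dr}.
\]
I then need the tail to be at most $e^{-ms^2/2}$ per sign (hence $2e^{-ms^2/2}$ overall). The plan is to compare the integrand $(1-r^2)^{(m-3)/2}\le e^{-(m-3)r^2/2}$ and bound the normalizer from below via the Beta-function value $\sqrt{\pi}\,\Gamma(\frac{m-1}2)/(2\Gamma(\frac m2))\gtrsim m^{-1/2}$. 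Small $m$ and small $s$ (where $2e^{-ms^2/2}\ge1$ is trivial) are handled separately. The constant $1/2$ in the exponent is the delicate point, because the integrand comparison only gives $m-3$, not $m$. I would try first the classical spherical-cap bound $\sigma\{v_1\ge s\}\le e^{-ms^2/2}$, proved by the cone/ball volume argument: the cap region lies inside a ball of radius $\sqrt{1-s^2}$, giving $(1-s^2)^{m/2}\le e^{-ms^2/2}$. This argument is standard for $s\ge 1/\sqrt2$ and extends with a ball of radius $1/(2s)$ for smaller $s$. If the exact constant resists, I would note that any absolute constant suffices downstream.

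\emph{Column-sphere bound.} I will write $v_j=g_j/\norm{g_j}$ with $g_j$ independent standard Gaussians in $\R^m$. Then $[v_1\ \cdots\ v_m]=GD^{-1}$, where $G$ is an $m$-by-$m$ standard Gaussian matrix and $D=\diag(\norm{g_j})$. By \Cref{lem:singular} with $t=\sqrt m$, $\norm G_{\op}\le3\sqrt m$ except with probability $e^{-m/2}$. By the chi-square bound with $u=1/2$ and a union bound over the $m$ columns, every $\norm{g_j}^2\ge m/2$ except with probability $2me^{-m/32}\le Ce^{-cm}$. On the intersection of these events, $\norm{GD^{-1}}_{\op}\le3\sqrt m/\sqrt{m/2}=3\sqrt2<4$.

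\emph{Net.} I will take a maximal $1/4$-separated subset of the sphere. Balls of radius $1/8$ around its points are disjoint and lie in the ball of radius $9/8$, so the volume comparison gives at most $9^m$ points.

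\emph{Quadratic forms on the net.} Pick a unit $x$ with $|x^TAx|=\norm A$ and a net point $v$ with $\norm{x-v}\le1/4$. Then
\[
x^TAx-v^TAv=(x-v)^TA(x+v),
\]
so $\norm A\le|v^TAv|+\tfrac12\norm A$, which gives $\norm A\le2\max_v|v^TAv|$. Among all the steps, only the sharp constant in \eqref{eq:sphere} is a real obstacle.
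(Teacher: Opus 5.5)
Your chi-square, net, and quadratic-form arguments are correct. The chi-square step is a harmless variant: you bound the per-coordinate moment-generating function by $e^{2\lambda^2}$ and $e^{\lambda^2}$, whereas the paper optimizes the exact $(1-2t)^{-m/2}$ at $t=u/(2(1+u))$. Two other steps fail as written.

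\textbf{Column-sphere bound.} Your final inequality is false: $3\sqrt2\approx4.24>4$. So the events $\norm G_{\op}\le3\sqrt m$ and $\min_j\norm{g_j}^2\ge m/2$ do not yield \eqref{eq:column-sphere}. You must tighten the constants. One way, as in the paper, is to take $t=\sqrt m/2$ in \Cref{lem:singular}, which gives $\norm G_{\op}\le\tfrac52\sqrt m$ except with probability $e^{-m/8}$. Then require $\min_j\norm{g_j}\ge\tfrac34\sqrt m$, using the chi-square bound with $u=7/16$ and a union bound. The ratio is then $10/3<4$. Alternatively, keep $3\sqrt m$ and demand $\norm{g_j}\ge\tfrac34\sqrt m$; the ratio is at most $4$, which suffices for the event $\{>4\}$.

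\textbf{Spherical tail.} The cone argument you cite has its two cases interchanged. The sector $\{\tau x:0\le\tau\le1,\ x\in S^{m-1},\ x_1\ge s\}$ contains the origin. The ball of radius $\sqrt{1-s^2}$ about $se_1$ contains the origin only when $s\le\sqrt{1-s^2}$, i.e.\ $s\le1/\sqrt2$. For $s>1/\sqrt2$, no ball of that radius contains the sector, since it must be centered at $se_1$ to enclose the boundary sphere of the cap. Conversely, the ball of radius $1/(2s)$ about $e_1/(2s)$ does contain the sector for all $s$, but it only gives $(2s)^{-m}$. That bound exceeds $1$ for $s<1/2$, and it already exceeds $e^{-ms^2/2}$ at $s=0.55$.

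The correct assignment uses $\sqrt{1-s^2}$ for $s\le1/\sqrt2$ and $1/(2s)$ for $s\ge1/\sqrt2$. In the second range, $(2s)^{-m}\le e^{-ms^2/2}$ because $\log(2s)-s^2/2$ is increasing on $(0,1]$ and positive at $1/\sqrt2$. The range $s\ge1$ is trivial. With the cases corrected, the argument gives exactly $e^{-ms^2/2}$ per side and is a valid alternative. Your fallback remark that any absolute constant suffices downstream does not prove the lemma as stated, so you should not rely on it.

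\textbf{The paper's route.} The paper avoids geometry entirely. It writes $Z=Rv$ with $R$ and $v$ independent, so that $\E(Z^Ty)^{2j}=\E R^{2j}\,\E(v^Ty)^{2j}$. Since $\E R^{2j}=m(m+2)\cdots(m+2j-2)\ge m^j$, this gives $\E(v^Ty)^{2j}\le(2j-1)!!\norm y^{2j}/m^j$. Summing the series yields $\E e^{\lambda v^Ty}\le e^{\lambda^2\norm y^2/(2m)}$, and a Chernoff bound finishes. This produces the factor $m$, rather than the $m-3$ you ran into, with no case split. It also gives a sub-Gaussian moment-generating bound, not just a tail estimate.
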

\begin{proof}
Gaussian integration gives
\(\E e^{t\norm Z^2}=(1-2t)^{-m/2}\), \(t<1/2\).
Exponential Markov, with the usual choices \(t=u/(2(1+u))\) and the
negative counterpart, and the elementary inequalities
\(u-\log(1+u)\ge u^2/4\), \(-u-\log(1-u)\ge u^2/2\), give the first claim.
Rotational symmetry gives \(Z=Rv\) with independent radius and direction.
The same generating function gives
\(\E R^{2j}=m(m+2)\cdots(m+2j-2)\), whereas
\(\E Z_1^{2j}=(2j-1)!!\). Thus
\[
\E(v^Ty)^{2j}\le(2j-1)!!\,\norm y^{2j}/m^j.
\]
Odd moments vanish. Expanding the exponential gives
\(\E e^{\lambda v^Ty}\le e^{\lambda^2\norm y^2/(2m)}\), which yields
\eqref{eq:sphere}.

For \eqref{eq:column-sphere}, write the columns as independent
\(g_i/\norm{g_i}\). Lemma~\ref{lem:singular} gives
\(\norm{[g_1\cdots g_m]}\le(5/2)\sqrt m\) except with probability
\(e^{-m/8}\). The chi-square bound and a union bound give
\(\min_i\norm{g_i}\ge(3/4)\sqrt m\) except with probability
\(C e^{-cm}\), after changing constants. Their ratio is at most \(10/3<4\).

For the net take a maximal \(1/4\)-separated set. Disjoint radius-\(1/8\)
balls about its points lie in the radius-\(9/8\) ball, giving cardinality
\(9^m\); maximality gives covering. For a unit norm-attaining eigenvector
\(x\), select a net point \(v\) with \(\norm{x-v}\le1/4\).
Then \(|x^TAx-v^TAv|\le2\norm A\norm{x-v}\le\norm A/2\).
\end{proof}

\section{Adaptive Information in a Rectangular Wishart Matrix}\label{sec:posterior}
This section treats arbitrary adaptive products, not only Krylov vectors.

\begin{lemma}[Deferred Gaussian factorization]\label[lemma]{lem:deferred}
Let \(X\in\R^{D\times(D-k)}\) have independent \(N(0,1/D)\) entries,
\(1\le k<D\), and let \(W=XX^T\). After \(q\le D-k\) adaptive orthonormal
queries \(v_1,\ldots,v_q\) to the oracle \(v\mapsto Wv\), one may choose
an orthogonal row basis, beginning with the queries, and a latent orthogonal
column basis, in which the factor is
\begin{equation}\label{eq:factor}
\begin{bmatrix}B_q&0\\ C_q&G_q\end{bmatrix}.
\end{equation}
The square \(B_q\) is invertible almost surely; \(B_q,C_q\) are determined by
the observed transcript. Conditional on an explicitly enlarged sigma-field
containing that transcript, \(G_q\) is an independent
\((D-q)\)-by-\((D-k-q)\) iid \(N(0,1/D)\) matrix. The enlargement is not
revealed to the policy.
\end{lemma}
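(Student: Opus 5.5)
The proof is by induction on $q$, and it uses the left and right orthogonal invariance of the Gaussian factor $X$. Write $W=XX^T$ and $m=D-k$. The inductive claim for $q$ has four parts. There is an orthogonal matrix $U_q=[V_q\ V_q^\perp]$ whose first $q$ columns are the queries. There is an orthogonal matrix $R_q\in O(m)$, measurable with respect to an enlarged sigma-field $\mathcal G_q$. In these bases $U_q^TXR_q$ has the block form \eqref{eq:factor}, with $B_q$ lower triangular with positive diagonal and $(B_q,C_q)$ a measurable function of the transcript $(v_1,Wv_1,\dots,v_q,Wv_q)$. Finally, conditional on $\mathcal G_q$, the block $G_q$ is iid $N(0,1/D)$.

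\emph{Base case $q=0$.} Take $\mathcal G_0$ trivial, $R_0=I$ and $G_0=X$. The claim holds immediately.

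\emph{Inductive step.} The policy chooses $v_{q+1}$ measurably from the transcript and its private seed, and we include the seed in $\mathcal G_q$. Since $v_{q+1}\perp V_q$, write $v_{q+1}=V_q^\perp w$ with $w$ a unit vector. Then $X^Tv_{q+1}=R_q\bigl[0;\ G_q^Tw\bigr]$. Put $g=G_q^Tw\in\R^{m-q}$. Conditional on $\mathcal G_q$, the vector $g$ is $N(0,I/D)$, because $w$ is $\mathcal G_q$-measurable and a unit vector, and $G_q$ has iid Gaussian entries independent of $\mathcal G_q$.

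Next choose a Householder reflection $H$ (depending on $g$) with $Hg=\norm g e_1$. Rotate the column basis by $\operatorname{diag}(I_q,H^T)$ and extend the row basis by $v_{q+1}$ followed by an orthonormal completion $\tilde w$ of $w$ inside $\range V_q^\perp$. In the new bases the factor again has the form \eqref{eq:factor}. The new row of $B_{q+1}$ is $(\text{row from }C_q\text{ in direction }w,\ \norm g)$. The new off-diagonal zero block appears because $v_{q+1}^TXR$ vanishes beyond the first new column. The new Gaussian block is $G_{q+1}=\tilde w^TG_qH^T$ with the first column removed.

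\emph{Main obstacle: conditional independence.} We must show that, conditional on $\mathcal G_{q+1}:=\mathcal G_q\vee\sigma(g)$, the block $G_{q+1}$ is iid $N(0,1/D)$. First decompose $G_q=w\,g^T+\tilde w\,\tilde w^TG_q$. Given $\mathcal G_q$, the matrix $\tilde w^TG_q$ is iid Gaussian and independent of $g$, because projections onto orthogonal directions of an iid Gaussian matrix are independent. Then multiply on the right by $H^T$. Here $H$ is $g$-measurable and orthogonal, and $\tilde w^TG_q$ is rotation invariant and independent of $g$. Hence $\tilde w^TG_qH^T$ is iid Gaussian given $\mathcal G_{q+1}$, and dropping a column preserves this.

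\emph{Transcript measurability.} We must also check that $(B_{q+1},C_{q+1})$ is transcript-measurable, not merely $\mathcal G_{q+1}$-measurable. The observed data are $U^TWU=\begin{bmatrix}B&0\\C&G\end{bmatrix}\begin{bmatrix}B&0\\C&G\end{bmatrix}^T$ restricted to the first $q$ columns, that is, $\begin{bmatrix}BB^T\\ CB^T\end{bmatrix}$. Given $B$ lower triangular with positive diagonal, $B$ is the Cholesky factor of the observed $q\times q$ Gram block, and then $C=(\text{observed})B^{-T}$. So both are functions of the transcript, consistent with the inductive construction, and the enlargement ($R_q$ and the $g$'s) is never shown to the policy.

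\emph{Invertibility.} The diagonal entries of $B_q$ are norms of Gaussian vectors in dimension $m-j+1\ge1$ for $j\le q\le m$. Each is nonzero almost surely, so $B_q$ is invertible almost surely. This is where the hypothesis $q\le D-k$ is needed.
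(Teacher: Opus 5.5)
Your construction is the paper's. You rotate the remaining row space so the new query comes first. You rotate the remaining column space by a $g$-measurable orthogonal map taking the new row to $\norm g e_1$. You split the other rows into their component along $g$ and an orthogonal Gaussian remainder, and you recover $(B_q,C_q)$ from the observed Gram data by Cholesky.

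\textbf{The gap is the enlarged field.} The choice $\mathcal G_{q+1}=\mathcal G_q\vee\sigma(g)$ is too small. In the basis $U_q$ the new reply is
\[
 U_q^TWv_{q+1}=\begin{bmatrix}B_qC_q^Tw\\ C_qC_q^Tw+G_qg\end{bmatrix},
 \qquad G_qg=\norm g^2w+\tilde w\,\tilde w^TG_qg .
\]
Incidentally, $X^Tv_{q+1}=R_q[C_q^Tw;\,g]$, not $R_q[0;\,g]$, although your description of the new row of $B_{q+1}$ is correct.

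So the transcript reveals $\tilde w^TG_qg=\norm g\,\tilde w^TG_qH^Te_1$. This is the first column of $\tilde w^TG_qH^T$, i.e.\ the last column of $C_{q+1}$. Given $\mathcal G_q\vee\sigma(g)$ it has the nondegenerate law $N(0,\norm g^2I/D)$, so it is not measurable with respect to that field. This has two consequences:
\begin{itemize}
\item Your $\mathcal G_{q+1}$ does not contain the transcript. The lemma requires this, and Lemma~\ref{lem:overlap} uses it to pass from the enlarged field back to the actual transcript.
\item The induction breaks internally. The next query $v_{q+2}$ depends on $Wv_{q+1}$, so at the next step the claim that $w$ is $\mathcal G_{q+1}$-measurable is false. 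Your conditional law for the next $g$ rests on that claim. The failure already occurs at the second query.
\end{itemize}

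\textbf{The repair} is the one the paper makes (``expose $z$ and the projections'').
\begin{enumerate}
\item Add to the induction hypothesis that $\mathcal G_q$ contains the seed and the transcript, so that $U_q$, $R_q$, and $w$ are $\mathcal G_q$-measurable.
\item Set $\mathcal G_{q+1}=\mathcal G_q\vee\sigma(g,\tilde w^TG_qg)$.
\item Given $\mathcal G_q\vee\sigma(g)$, your argument shows that $\tilde w^TG_qH^T$ is iid $N(0,1/D)$, so its columns are conditionally independent.
\item Conditioning further on its first column therefore leaves the remaining columns, namely $G_{q+1}$, iid $N(0,1/D)$.
\item The display shows the new reply is a function of $(B_q,C_q,w,\norm g,\tilde w^TG_qg)$, hence $\mathcal G_{q+1}$-measurable.
\end{enumerate}
With this change, your Cholesky measurability and almost-sure invertibility arguments go through unchanged.
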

\begin{proof}
Include the independent seed in the initial sigma-field. Orthogonalize each
new informative query against previous queries. Its reply is obtainable
from one new product and the stored previous products. A dependent query
adds nothing and can be omitted.

At step zero the assertion is the original iid factor. Suppose it holds at
step \(t\). Conditional on the enlarged field, rotate the remaining row
space so that the new query is its first basis vector. This rotation is
measurable with respect to that field, since the new query is a function of
the actual transcript. It preserves the iid conditional law of \(G_t\).
Let its first row be \(z^T\). Rotate the remaining column space so that
\(z^T=(\rho,0,\ldots,0)\), with \(\rho>0\) almost surely. Conditional on
\(z\), every other row splits into its projection on \(z/\rho\) and an
independent Gaussian row in the orthogonal complement. The new product,
after subtracting the known \(C_tC_t^T\) contribution, reveals \(\rho^2\)
and \(\rho\) times these projections. Expose \(z\) and the projections in
the enlarged field. The remaining orthogonal rows form \(G_{t+1}\), still
independent Gaussian conditional on that field. This proves the induction.
Any subsequent transcript-measurable change of basis in the remaining row
space preserves the same law.

For explicit measurability, choose every complement by a fixed
Gram--Schmidt rule. In the query row basis, \(B_t\) can be the positive
Cholesky factor of \((v_i^TWv_j)_{i,j\le t}\), and
\(C_t=(U_t^TW[v_1\cdots v_t])B_t^{-T}\), where \(U_t\) is the selected
orthogonal complement. These are actual-transcript functions. The latent
column rotations do not affect \(W\) or any observed product. Integrating
back over the enlargement proves every conditional assertion about the
actual transcript used below. A policy trying to select a new null query
before \(D-k\) independent rows have been exposed has probability zero:
its new Gaussian row is nonzero almost surely. Thus the invertibility
assertion covers adaptive policies as well.
\end{proof}

\begin{lemma}[Posterior overlap, arbitrary output rank]\label[lemma]{lem:overlap}
Under Lemma~\ref{lem:deferred}, after at most \(q\le D-k\) products, every
transcript-measurable orthogonal projector \(R\) of rank at most \(s\) obeys
\begin{equation}\label{eq:overlap}
\boxed{\quad \E\tr(R\Pi_{\ker W})\le\frac{2k(q+s)}{D-q}.\quad}
\end{equation}
Outputs and queries can be arbitrary measurable functions of the transcript
and the independent seed.
\end{lemma}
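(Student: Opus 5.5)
The plan is to use the factorization of \Cref{lem:deferred}, condition on its enlarged sigma-field, and compute the kernel of \(W\) explicitly in the adapted row basis. In that basis \(W=FF^T\), where \(F\) is the block matrix \eqref{eq:factor}. A vector \(x=(x_1,x_2)\), with \(x_1\in\R^q\) and \(x_2\in\R^{D-q}\), lies in \(\ker W=\ker F^T\) exactly when
\[
x_2^TG_q=0\qquad\text{and}\qquad x_1=Ax_2,\qquad A:=-B_q^{-T}C_q^T .
\]
The matrix \(A\) is transcript-measurable. Since \(G_q\) is a conditionally iid Gaussian \((D-q)\times(D-q-k)\) matrix, its left null space is almost surely \(k\)-dimensional. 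By rotation invariance it is Haar-distributed on the Grassmannian of \(\R^{D-q}\), independently of the transcript and of \(R\). Choose an orthonormal basis \(U\in\R^{(D-q)\times k}\) of that null space, so \(\E[UU^T\mid\text{transcript}]=\frac{k}{D-q}I_{D-q}\). Put \(K=\binom{A}{I}U\). Then
\[
\Pi_{\ker W}=K\,M\,K^T,\qquad M=\bigl(I_k+U^TA^TAU\bigr)^{-1}\preceq I_k .
\]

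Next, split the output projector along the query block. Let \(E\) be the coordinate projector onto the first \(q\) basis vectors (the query span) and \(F'=I-E\). For \(R\succeq0\) we have the operator inequality \(R\preceq2(ERE+F'RF')\), and \(ERE\preceq E\). Hence
\[
\tr(R\Pi_{\ker W})\le2\tr(E\Pi_{\ker W})+2\tr\bigl(F'RF'\,\Pi_{\ker W}\bigr).
\]

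For the second term, \(F'\Pi_{\ker W}F'=UMU^T\preceq UU^T\). So it is at most \(\tr(R'UU^T)\), where \(R'=F'RF'\) is transcript-measurable with \(0\preceq R'\preceq I\) and \(\tr R'\le s\). Taking the conditional expectation over the Haar null space gives at most \(ks/(D-q)\). Integrating back over the enlargement, as in \Cref{lem:deferred}, removes the conditioning. Measurability of \(R\) in the transcript and the independent seed is all that is used, because \(U\) is independent of both.

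For the first term,
\[
\tr(E\Pi_{\ker W})=k-\tr(UMU^T)=\tr\bigl(X(I+X)^{-1}\bigr),\qquad X=U^TA^TAU .
\]
The target bound is \(\E\tr(X(I+X)^{-1})\le kq/(D-q)\). Adding \(2ks/(D-q)\) from the second term then gives \eqref{eq:overlap}.

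This query-block estimate is where I expect the real obstacle. \(A\) is transcript-measurable, but its norm is not controlled, so the bound must come from its rank being at most \(q\), not from its size. My first attempt is to write \(A=P\Lambda Q^T\) with \(Q\) having \(q'\le q\) orthonormal columns. Then \(X=Y^T\Lambda^2Y\) with \(Y=Q^TU\), and \(X(I+X)^{-1}\preceq\) the projector onto \(\range(Y^T)\). A cruder approach is to use \(\tr\bigl(X(I+X)^{-1}\bigr)\le\tr\bigl(Y^T(YY^T)^{+}\cdots\bigr)\) and compare with \(\tr(QQ^TUU^T)\), whose mean is \(kq'/(D-q)\). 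If the monotone comparison fails in that exact form, the fallback is to exploit the symmetry of the original problem: before any queries, \(\ker W\) is Haar in \(\R^D\). Each query \(v_i\) is chosen from information orthogonal to the unrevealed null directions, so the same Haar computation applied step by step should give \(\E\,v_i^T\Pi_{\ker W}v_i\le k/(D-q)\) per query. Summing over \(i\le q\) then bounds \(\E\tr(E\Pi_{\ker W})\) by \(kq/(D-q)\), which supplies the \(q\) in the numerator of \eqref{eq:overlap}.
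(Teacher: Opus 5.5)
Your splitting \(R\preceq2(ERE+F'RF')\) and your complement-block bound (\(F'\Pi_{\ker W}F'\preceq UU^T\) with \(\E[UU^T\mid\mathcal T_q]=\tfrac{k}{D-q}I\)) are exactly the paper's. The paper applies the same factor-two block inequality to the posterior \(\E[\Pi_{\ker W}\mid\mathcal T_q]\) rather than to \(R\), which makes no difference.

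The query block, however, cannot be handled by your first attempt, and your remark that the bound ``must come from its rank'' is mistaken. Conditional on the final enlarged field, the query-span mass is exactly \(\tr(X(I+X)^{-1})\). No bound depending only on \(\operatorname{rank}A\le q\) can give \(kq/(D-q)\). For \(q=1\) and \(A=\lambda b^T\) with \(b\) a unit vector, this trace equals \(\mu/(1+\mu)\) with \(\mu=\lambda^2\norm{U^Tb}^2\). It tends to \(1\) as \(\lambda\to\infty\), whereas the target \(k/(D-1)\) is small when \(D\gg k\). Such transcripts genuinely occur. A large \(\norm A\) arises only when \(B_q\) is nearly singular, i.e.\ when some query landed near \(\ker W\). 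On those transcripts the final-time posterior really does put mass of order \(\min\{q,k\}\) on the query span. Your comparison with \(\tr(QQ^TUU^T)=\norm{Y}_{\Frob}^2\) fails for the same reason. The estimate holds only on average over transcripts, and it has to come from the time order of the queries.

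Your fallback is precisely the paper's argument. Make it precise through the factorization at intermediate steps, rather than through ``information orthogonal to the unrevealed null directions.'' After orthonormalization, \(v_i\) is measurable with respect to the step-\((i-1)\) enlarged field and lies in the complement block of the step-\((i-1)\) row basis. There, the bottom-right block of the same fixed projector \(\Pi_{\ker W}\) is \(\preceq U^{(i-1)}(U^{(i-1)})^T\). This is your second-term inequality with \(q\) replaced by \(i-1\), where \(\range U^{(i-1)}=\ker G_{i-1}^T\) is conditionally rotation invariant of rank \(k\) in \(\R^{D-i+1}\). Hence \(\E[v_i^T\Pi_{\ker W}v_i]\le k/(D-i+1)\le k/(D-q)\). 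Since \(\tr(E\Pi_{\ker W})=\sum_{i\le q}v_i^T\Pi_{\ker W}v_i\), summing gives \(\E\tr(E\Pi_{\ker W})\le kq/(D-q)\). With this step replacing the first attempt, your proof coincides with the paper's.
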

\begin{proof}
In \eqref{eq:factor}, the nullspace consists of
\[
(-B_q^{-T}C_q^Tv,v),\qquad v\in\ker G_q^T.
\]
If \(V\) is an orthonormal basis for \(\ker G_q^T\), the bottom-right block
of its full-space projector is
\[
V\bigl(I+V^TC_qB_q^{-1}B_q^{-T}C_q^TV\bigr)^{-1}V^T\preceq VV^T.
\]
Conditional on the enlarged field, \(\ker G_q^T\) is rotation-invariant of
rank \(k\) in dimension \(D-q\); hence \(\E VV^T=kI/(D-q)\).
The same bound holds after conditioning only on the actual transcript.
Before the \(t\)th orthogonal query, its expected nullspace mass is therefore
at most \(k/(D-t+1)\). The expected total mass of the final query span is
at most \(\sum_{t=1}^q k/(D-t+1)\le kq/(D-q)\).

A positive semidefinite block matrix \(K\) satisfies
\(K\preceq2\diag(K_{11},K_{22})\): this follows from
\(2|x^TK_{12}y|\le x^TK_{11}x+y^TK_{22}y\).
Apply it to \(\E[\Pi_{\ker W}\mid\mathcal T_q]\) in the query/complement
basis. Since \(0\preceq R\preceq I\), the top-block trace against \(R\) is
at most the posterior total mass of the query span. The bottom trace is at
most \(ks/(D-q)\). Take expectations and multiply by two. Fewer or dependent
queries can be padded with orthogonal ones and the output left unchanged;
extra information only strengthens the bound's applicability.
\end{proof}

\section{Sequential Readouts of a Continuous Matrix}\label{sec:matrix}

\subsection{The presampled law and its analytic event}
Some fixed scalar constants used in the objective can be chosen now. Let
\(S(t)=35t^4-84t^5+70t^6-20t^7\) on \([0,1]\), with constant extensions
zero and one. Set \(\chi(u)=S(4u-1)\), and choose
\[
H_0=2^{20},\qquad \alpha=H_0/32,\qquad B_0=16H_0,
\qquad \gamma=\alpha/(6B_0)=1/3072.
\]
For example \(\norm{\chi'}_\infty\le12\); the deliberately loose fixed
constants above make all inequalities below valid. Only their absolute
nature matters.

Choose a sufficiently large absolute integer \(\kappa\). For integers
\(J\ge4\) and \(r\ge M:=1+4096J^2\), put
\[
k=\kappa r,\quad D=kJ,\quad \eta=1/J.
\]
Let \(P\) project onto the first \(r\) coordinates of \(\R^D\), and define
\[
\mathsf D=I+(\sqrt\eta-1)P.
\]
Draw \(X\in\R^{D\times(D-k)}\) iid \(N(0,1/D)\), and set
\begin{equation}\label{eq:matrix-law}
W=XX^T,\qquad A=I+256J^2\mathsf D W\mathsf D,
\qquad \mathcal K=\ker W,\quad \mathcal Q=\mathsf D^{-1}\mathcal K.
\end{equation}
Use the event
\begin{equation}\label{eq:good}
\mathcal E=\left\{\norm W\le16,\quad
\lambda_{\min}(X^T(I-P)X)\ge\frac1{64J^2},\quad
\norm{P\Pi_{\mathcal K}P-\eta P}\le\eta/24\right\}.
\end{equation}
The law of a leaf matrix is the law in \eqref{eq:matrix-law} conditioned on
\(\mathcal E\), once before any query. This is a condition on \(W\): the
positive eigenvalues in the middle test are those of \((I-P)W(I-P)\), whose
rank is \(D-k\) almost surely.

\begin{lemma}[Positive-probability source and compressed kernel]\label[lemma]{lem:good}
For sufficiently large absolute \(\kappa,J_0\), and all \(J\ge J_0\),
\(r\ge M\), one has \(\PP(\mathcal E)\ge.99\). On this event,
\begin{equation}\label{eq:analytic-matrix}
I\preceq A\preceq MI,\qquad
A\succeq\Pi_{\mathcal Q}+5(I-\Pi_{\mathcal Q}),\qquad
\spec(P\Pi_{\mathcal Q}P|_{\range P})\subseteq[1/3,2/3].
\end{equation}
\end{lemma}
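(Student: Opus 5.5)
The proof splits into two parts. First, a probabilistic estimate shows that each of the three conditions in \eqref{eq:good} fails with probability at most $0.003$. Second, a deterministic linear-algebra argument derives \eqref{eq:analytic-matrix} on $\mathcal E$. Throughout we use $k=\kappa r$, $D=\kappa rJ$, $k/D=\eta=1/J$ and $r\ge M\ge 4096J^2$. So $r/J$, $k/J$ and $\kappa$ are all large, and every failure probability below is exponentially small in one of them.

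\emph{Probability of $\mathcal E$.}

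(i) Apply Lemma~\ref{lem:singular} to $\sqrt D\,X$, which is $D\times(D-k)$. It gives $s_{\max}(X)\le 2+t/\sqrt D$, so $\norm W\le 16$ except with probability $e^{-cD}$.

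(ii) $(I-P)X$ is, up to zero rows, a $(D-r)\times(D-k)$ matrix with iid $N(0,1/D)$ entries, and $X^T(I-P)X$ is its Gram matrix. Lemma~\ref{lem:singular} bounds its least singular value below by
\[
\bigl(\sqrt{D-r-1}-\sqrt{D-k}-t\bigr)/\sqrt D .
\]
The row gap $k-r=(\kappa-1)r$ gives $\sqrt{D-r-1}-\sqrt{D-k}\gtrsim (\kappa-1)r/(2\sqrt D)$. Dividing by $\sqrt D$ gives about $1/(2J)$. Take $t=c\sqrt{\kappa r/J}$; its failure probability $e^{-t^2/2}$ is tiny because $\kappa r/J\ge\kappa J$. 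The squared bound is then at least $1/(16J^2)\ge 1/(64J^2)$.

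(iii) The column space of $X$ has a rotation-invariant law, so $\mathcal K$ is a uniformly random $k$-dimensional subspace. Write $\Pi_{\mathcal K}=G(G^TG)^{-1}G^T$ with $G$ a $D\times k$ standard Gaussian matrix. By Lemma~\ref{lem:singular}, $G^TG=D(I+E_1)$ with $\norm{E_1}\lesssim\sqrt{k/D}=J^{-1/2}$. The $r\times k$ block $PG$ satisfies $(PG)(PG)^T=k(I+E_2)$ on $\range P$, with $\norm{E_2}\lesssim\sqrt{r/k}=\kappa^{-1/2}$. Hence
\[
P\Pi_{\mathcal K}P=\eta(I+E)\quad\text{on }\range P,\qquad \norm E\le 1/24,
\]
once $\kappa$ and $J_0$ are large. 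The failure probability is exponentially small in $r$ and $k$. A union bound over (i)--(iii) gives $\PP(\mathcal E)\ge .99$.

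\emph{Deterministic consequences on $\mathcal E$.}

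Since $0\preceq\mathsf D\preceq I$, we get $I\preceq A\preceq 1+256J^2\cdot16=M$.

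Next, $\ker(\mathsf DW\mathsf D)=\mathsf D^{-1}\mathcal K=\mathcal Q$. So $A=I$ on $\mathcal Q$, and $\range(\mathsf DW\mathsf D)=\mathcal Q^\perp$. The nonzero eigenvalues of $\mathsf DW\mathsf D=(\mathsf DX)(\mathsf DX)^T$ coincide with those of $X^T\mathsf D^2X$. Because $\mathsf D^2=I-(1-\eta)P\succeq I-P$, we have
\[
X^T\mathsf D^2X\succeq X^T(I-P)X\succeq (64J^2)^{-1}I .
\]
Therefore $A\succeq I+4(I-\Pi_{\mathcal Q})$, which is the middle claim.

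For the spectral claim, let $V$ be an orthonormal basis of $\mathcal K$. Using $\mathsf D^{-2}=I+(\eta^{-1}-1)P$,
\[
P\Pi_{\mathcal Q}P=\eta^{-1}\,PV\bigl(I+(\eta^{-1}-1)V^TPV\bigr)^{-1}V^TP .
\]
Take an SVD of $N=PV$. The eigenvalues on $\range P$ are $\phi(\sigma^2)$, where $\phi(s)=(s/\eta)/(1+(\eta^{-1}-1)s)$ and $\sigma^2$ runs over the eigenvalues of $NN^T=P\Pi_{\mathcal K}P$. By (iii), $\sigma^2/\eta\in[23/24,25/24]$. The function $\phi$ is increasing, so the eigenvalues lie between
\[
\phi_{\min}=\frac{23/24}{1+(1-\eta)23/24}\ge\frac{23}{47}>\frac13
\quad\text{and}\quad
\phi_{\max}=\frac{25/24}{1+(1-\eta)25/24}\le\frac{25}{24+18.75}<\frac23,
\]
where the last bound uses $\eta\le1/4$.

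\emph{Main obstacle.} The delicate step is (iii): the compressed-kernel concentration must be accurate to the relative precision $1/24$. This is what forces $\kappa$ and $J_0$ to be large absolute constants. Step (ii) is the other point needing care, since the extra $k-r$ rows must dominate both the $-1$ and the deviation $t$. The remaining steps are routine.
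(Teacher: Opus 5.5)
Your proof is correct. Parts (i), (ii) and the deterministic consequences follow the paper's argument closely. The paper also reads the first two events off Lemma~\ref{lem:singular}. It derives the middle inequality from $X^T\mathsf D^2X\succeq X^T(I-P)X$. It obtains $P\Pi_{\mathcal Q}P=C[\eta I+(1-\eta)C]^{-1}$ by the push-through identity, which is your SVD computation, and its derivative bound $1/\eta$ for $t/(\eta+(1-\eta)t)$ replaces your endpoint evaluation of the increasing function $\phi$.

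The genuine difference is (iii). The paper writes $u^T\Pi_{\mathcal K}u$, for a fixed unit $u$, as a ratio of chi-square sums and concentrates it to relative accuracy $\varepsilon_0/2$. It then union-bounds over a $1/4$-net of the unit sphere of $\range P$, using the net lemma for symmetric matrices; here $\kappa$ must be large so that $e^{-c\varepsilon_0^2k}$ beats the $9^r$ net cardinality. You instead represent the kernel projector in law as $G(G^TG)^{-1}G^T$ and apply Lemma~\ref{lem:singular} twice, to $G$ and to the $k\times r$ matrix $(PG)^T$. This needs no net and no fixed-vector law, and it reuses a tool already in hand. The price is that your accuracy $\lesssim\kappa^{-1/2}+J^{-1/2}$ also requires $J$ large. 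That is harmless since $J_0$ is free, whereas in the paper $J\ge4$ already gives the spectral window and $J_0$ only controls failure probabilities.

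When writing it up, make the ``Hence'' step explicit. You get $c_-D^{-1}I\preceq(G^TG)^{-1}\preceq c_+D^{-1}I$ and $kc'_-P\preceq PGG^TP\preceq kc'_+P$, with all constants close to one. Conjugating the first sandwich by $PG$ preserves Loewner order, so $\eta c_-c'_-P\preceq P\Pi_{\mathcal K}P\preceq\eta c_+c'_+P$. Also say that the representation holds only in law. This suffices because the third event depends on $X$ only through $\Pi_{\mathcal K}$, and the union bound treats each event's failure probability separately.
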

\begin{proof}
Lemma~\ref{lem:singular}, applied to \(\sqrt D X\), gives
\(\norm W\le16\) except with probability \(e^{-2D}\).
After deleting the first \(r\) rows, the standard Gaussian factor has size
\((D-r)\)-by-\((D-k)\). Since \(k\ge2(r+1)\),
\[
\sqrt{D-r-1}-\sqrt{D-k}
=\frac{k-r-1}{\sqrt{D-r-1}+\sqrt{D-k}}
\ge\frac{k}{4\sqrt D}=\frac{\sqrt D}{4J}.
\]
Taking \(t=\sqrt D/(8J)\) in that lemma proves the middle event with failure
at most \(e^{-D/(128J^2)}\).

For the last event, rotation invariance of the Gaussian factor makes
\(\mathcal K\) a rotation-invariant rank-\(k\) subspace. For every fixed
unit vector \(u\), the variable \(u^T\Pi_{\mathcal K}u\) has the same law
as
\(\sum_{j=1}^k g_j^2/\sum_{j=1}^D g_j^2\) with independent standard
Gaussians. To verify this without a separate Grassmannian theorem, average
\(u\) over an independent uniform sphere vector; conditional on any fixed
rank-\(k\) subspace the displayed ratio is the result, and rotational
invariance makes its law independent of the initial \(u\).
The chi-square bounds show, for \(\varepsilon_0=1/24\),
\[
\PP\{|u^T\Pi_{\mathcal K}u-\eta|>\varepsilon_0\eta/2\}
\le4e^{-c\varepsilon_0^2 k}.
\]
For example impose relative errors at most \(\varepsilon_0/16\) on both
chi-square sums and compare their ratio. Apply this on a \(1/4\)-net of
the unit sphere of \(\range P\). Lemma~\ref{lem:sphere} implies the required
operator bound, with failure at most \(4\cdot9^r e^{-c\varepsilon_0^2k}\).
Choose \(\kappa\) large, then \(J_0\) large. Since \(r\ge4096J^2\), the
sum of all three failures is less than \(.01\).

Write \(C=P\Pi_{\mathcal K}P\) on \(\range P\). If \(Z\) is an
orthonormal basis for \(\mathcal K\), then
\[
\Pi_{\mathcal Q}
=\mathsf D^{-1}Z(Z^T\mathsf D^{-2}Z)^{-1}Z^T\mathsf D^{-1}.
\]
Since \(\mathsf D^{-2}=I+(\eta^{-1}-1)P\), multiplication and the identity
\(R(I+R^TR)^{-1}R^T=RR^T(I+RR^T)^{-1}\) give
\begin{equation}\label{eq:compression}
P\Pi_{\mathcal Q}P=C[\eta I+(1-\eta)C]^{-1}.
\end{equation}
The scalar map \(t/(\eta+(1-\eta)t)\) has derivative at most \(1/\eta\)
for \(t\ge0\), and its value at \(t=\eta\) is \(1/(2-\eta)\).
Thus \eqref{eq:good} bounds the spectrum in \eqref{eq:compression} within
\(1/24\) of \(1/(2-\eta)\), inside \([1/3,2/3]\) when \(J\ge4\).

The nonzero eigenvalues of \(\mathsf D W\mathsf D\) equal those of
\(X^T\mathsf D^2X\succeq X^T(I-P)X\); its kernel is \(\mathcal Q\).
The spectral contribution of \(256J^2\mathsf D W\mathsf D\) on
\(\mathcal Q^\perp\) is therefore at least four. Its norm is at most
\(4096J^2=M-1\). This proves the other assertions.
\end{proof}

\subsection{Exactly computable marked readouts}
Let \(T_j\) denote the Chebyshev polynomials, defined without any oracle by
\(T_0=1\), \(T_1(t)=t\), \(T_{j+1}(t)=2tT_j(t)-T_{j-1}(t)\).
Set
\[
\delta=\min\{\gamma/32,1/1000\},\qquad
m_{\rm pol}=\left\lceil\frac{\sqrt M}{2}\log\frac2\delta\right\rceil,
\]
and define
\begin{equation}\label{eq:filter}
f_M(t)=
\frac{T_{m_{\rm pol}}((M+5-2t)/(M-5))}
{T_{m_{\rm pol}}((M+3)/(M-5))}.
\end{equation}
A mark \(w\) is uniform on the unit sphere of \(\range P\), independent
of the matrix. Its pair is
\begin{equation}\label{eq:readout}
b(w)=H_0w,\qquad
\mathsf L=(I-P)f_M(A)P,\qquad a(w)=\mathsf Lw/\norm{\mathsf Lw}.
\end{equation}
On the unconditioned source, use the same formula when the denominator is
nonzero, and otherwise use the fixed unit vector \(e_{r+1}\). This convention
makes all counterfactual algorithms finite and measurable; no test of
\(\mathcal E\) is required.

\begin{lemma}[Readout certificate and synthesis]\label[lemma]{lem:readout}
There is an absolute \(C_m\) such that \(m_{\rm pol}\le C_mJ\).
On \(\mathcal E\), for every unit mark \(w\),
\begin{equation}\label{eq:certificate}
\frac25\le\norm{\mathsf Lw}\le\frac35,\qquad
\norm{a(w)}=1,\quad Pa(w)=0,\qquad a(w)^TA^{-1}b(w)\ge H_0/4.
\end{equation}
The exact readout is computable from at most \(m_{\rm pol}\) products with
\(A\), also off \(\mathcal E\). Each such product is computable from one
product with \(W\).
\end{lemma}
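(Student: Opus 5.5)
The plan is to read $f_M$ as a Chebyshev filter that isolates the eigenvalue $1$ of $A$, so that $f_M(A)\approx\Pi_{\mathcal Q}$, and then to reduce \eqref{eq:certificate} to the spectral control of $P\Pi_{\mathcal Q}P$ given by Lemma~\ref{lem:good}.

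\textbf{Degree bound and exact synthesis.} Since $\sqrt M=\sqrt{1+4096J^2}\le 65J$ and $\delta$ is an absolute constant, we get $m_{\rm pol}\le C_mJ$. For computability, $f_M(A)Pw$ is a polynomial of degree $m_{\rm pol}$ in $A$ applied to $Pw$. The three-term Chebyshev recurrence, applied to the affine image $(M+5)I-2A$ scaled by $1/(M-5)$, evaluates it with exactly $m_{\rm pol}$ products with $A$. Each product satisfies
\[
Av=v+256J^2\mathsf D\,W(\mathsf Dv),
\]
so it costs one product with $W$. Applying $(I-P)$, normalizing, and using the fallback $e_{r+1}$ when $\mathsf Lw=0$ are oracle-free. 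Hence the readout is defined and computable from these products on the whole source space, not only on $\mathcal E$.

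\textbf{Filter estimate.} For $t\in[5,M]$ the Chebyshev argument $(M+5-2t)/(M-5)$ lies in $[-1,1]$, so the numerator of \eqref{eq:filter} is at most $1$ in absolute value. At $t=1$ the argument equals the denominator's argument, so $f_M(1)=1$. The denominator argument is $1+x$ with $x=8/(M-5)$. Using
\[
T_m(1+x)\ge\tfrac12\bigl(1+x+\sqrt{2x+x^2}\bigr)^m\ge\tfrac12e^{cm\sqrt x}
\]
and the choice of $m_{\rm pol}$, the denominator is at least $1/\delta$. Hence $|f_M|\le\delta$ on $[5,M]$.

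On $\mathcal E$, Lemma~\ref{lem:good} shows that $\mathcal Q=\ker(\mathsf DW\mathsf D)$ is an eigenspace of $A$ with eigenvalue $1$. On $\mathcal Q^\perp$ the spectrum of $A$ lies in $[5,M]$. It follows that
\[
f_M(A)=\Pi_{\mathcal Q}+E,\quad \norm E\le\delta,\qquad A^{-1}=\Pi_{\mathcal Q}+F,\quad 0\preceq F\preceq\tfrac15(I-\Pi_{\mathcal Q}).
\]

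\textbf{Norm and orthogonality.} For a unit $w\in\range P$, write $\mathsf Lw=(I-P)\Pi_{\mathcal Q}w+(I-P)Ew$. We have
\[
\norm{(I-P)\Pi_{\mathcal Q}w}^2=w^TP\Pi_{\mathcal Q}Pw-\norm{P\Pi_{\mathcal Q}Pw}^2 .
\]
In the eigenbasis of $P\Pi_{\mathcal Q}P$ this is an average of $\lambda-\lambda^2$. The proof of Lemma~\ref{lem:good} places that spectrum within $1/24$ of $1/(2-\eta)\approx 1/2$, so this average is close to $1/4$. Adding the error $\delta\le10^{-3}$ gives $\frac25\le\norm{\mathsf Lw}\le\frac35$. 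The claims $Pa=0$ and $\norm a=1$ are then immediate.

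\textbf{Main obstacle: the inner product.} The remaining task is $a^TA^{-1}b=H_0\,a^TA^{-1}w$. The main term is
\[
a^T\Pi_{\mathcal Q}w=\frac{\norm{(I-P)\Pi_{\mathcal Q}w}^2+O(\delta)}{\norm{\mathsf Lw}},
\]
which is approximately $\tfrac14/\tfrac12=\tfrac12$.

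The crude bound $|a^TFw|\le\frac15$ is too weak, since it would only give about $0.17$. Instead use
\[
|a^TFw|\le\tfrac15\norm{(I-\Pi_{\mathcal Q})a}\,\norm{(I-\Pi_{\mathcal Q})w}.
\]
Here $\norm{(I-\Pi_{\mathcal Q})w}^2=1-w^TP\Pi_{\mathcal Q}Pw\approx\frac12$. The key identity is $(I-\Pi_{\mathcal Q})(I-P)\Pi_{\mathcal Q}w=-(I-\Pi_{\mathcal Q})P\Pi_{\mathcal Q}Pw$. Combined with $P\Pi_{\mathcal Q}Pw\approx\lambda w$, this gives $\norm{(I-\Pi_{\mathcal Q})a}\approx\frac{(1/2)(1/\sqrt2)}{1/2}\approx0.71$. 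The error is therefore at most about $0.1$, so $a^TA^{-1}w\ge 0.35\ge\frac14$.

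Making these approximations rigorous uses the $1/24$ spectral window together with $J\ge4$. This is a routine but careful two-sided computation, and it is the step where the slack is tightest.
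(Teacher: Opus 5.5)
\textbf{Verdict: correct, same route as the paper except at the last step, where the obstacle you describe does not exist.}

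Your degree bound, the one-$W$-product synthesis, the filter estimate $\norm{f_M(A)-\Pi_{\mathcal Q}}\le\delta$, and the norm certificate via $\mathsf L_0^T\mathsf L_0=B-B^2$ (with $B=P\Pi_{\mathcal Q}P$, $\mathsf L_0=(I-P)\Pi_{\mathcal Q}P$) all match the paper's proof.

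The only divergence is the inner product. There the crude bound $\norm{A^{-1}(I-\Pi_{\mathcal Q})}\le1/5$ is not too weak. Your figure $0.17$ comes from bounding the numerator $\nu(w)^2\ge2/9$ and the denominator $\norm{\mathsf Lw}\le3/5$ separately. That ignores that both are controlled by the same $\nu(w)=\norm{\mathsf L_0w}$. Your own estimate of the main term, $\approx1/2$, already gives about $0.3$ after subtracting $1/5$.

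The paper keeps the ratio coupled. With $a_0=\mathsf L_0w/\nu(w)$ one has exactly $a_0^T\Pi_{\mathcal Q}w=\nu(w)\ge\sqrt2/3$. Also $\norm{a(w)-a_0}\le2\delta/\nu(w)\le5\delta$. Hence
\[
a(w)^TA^{-1}w\ge\frac{\sqrt2}{3}-\frac15-5\delta\approx0.27>\frac14 .
\]
This uses only the stated window $[1/3,2/3]$ of Lemma~\ref{lem:good}.

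Your refined Cauchy--Schwarz step through $(I-\Pi_{\mathcal Q})\mathsf L_0w=-(I-\Pi_{\mathcal Q})Bw$ is valid, and the deferred computation does close. To do it rigorously, replace the eigenvector heuristic $Bw\approx\lambda w$ with the quadratic-form identity $\norm{(I-\Pi_{\mathcal Q})Bw}^2=w^T(B^2-B^3)w$. On the $1/24$ window $\lambda\in[0.458,0.614]$ this gives:
\begin{itemize}
\item $\norm{(I-\Pi_{\mathcal Q})Bw}^2\le0.146$;
\item $\norm{(I-\Pi_{\mathcal Q})w}^2\le0.542$;
\item $\nu(w)\ge0.487$.
\end{itemize}
So the error term is below $0.12$ and the total is above $0.36$.

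That route buys only a better constant. It costs importing the sharper window from inside the proof of Lemma~\ref{lem:good} rather than its statement, plus extra computation. Your claim that this is where the slack is tightest is also mistaken: the margin is comfortable on either route.
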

\begin{proof}
For \(z_0=(M+3)/(M-5)>1\), the polynomial recurrence gives
\(T_j(\cosh u)=\cosh(ju)\); induction also gives
\(|T_j(t)|\le1\) on \([-1,1]\) using \(t=\cos\theta\).
As \(\cosh u\le1+u^2\) for \(0\le u\le1\),
\(\operatorname{arcosh}z_0\ge2/\sqrt M\). Hence the denominator in
\eqref{eq:filter} is at least
\(\frac12\exp(2m_{\rm pol}/\sqrt M)\ge1/\delta\).
We have \(f_M(1)=1\) and \(\sup_{[5,M]}|f_M|\le\delta\).
By \eqref{eq:analytic-matrix},
\begin{equation}\label{eq:filter-error}
\norm{f_M(A)-\Pi_{\mathcal Q}}\le\delta.
\end{equation}
Also \(\sqrt M\le65J\), proving the degree bound with a fixed \(C_m\).

Let \(B=P\Pi_{\mathcal Q}P\) and \(\mathsf L_0=(I-P)\Pi_{\mathcal Q}P\).
The projector identity gives \(\mathsf L_0^T\mathsf L_0=B-B^2\).
Its eigenvalues on \(\range P\) lie in \([2/9,1/4]\), so
\[
\sqrt2/3\le\nu(w):=\norm{\mathsf L_0w}\le1/2,
\qquad \norm{\mathsf L-\mathsf L_0}\le\delta.
\]
This proves the denominator bounds. Put \(a_0=\mathsf L_0w/\nu(w)\).
The inequality
\(\norm{u/\norm u-v/\norm v}\le2\norm{u-v}/\norm v\) gives
\(\norm{a(w)-a_0}\le5\delta\).
Moreover
\(a_0^T\Pi_{\mathcal Q}w=\nu(w)\),
\(A^{-1}\Pi_{\mathcal Q}=\Pi_{\mathcal Q}\), and
\(\norm{A^{-1}(I-\Pi_{\mathcal Q})}\le1/5\). Thus
\[
a(w)^TA^{-1}w\ge\sqrt2/3-1/5-5\delta>1/4.
\]
The other vector identities in \eqref{eq:certificate} follow from the formula.

Finally evaluate the Chebyshev recurrence on \(w\). Each new degree uses
one product with \(A\); projection and normalization use finite arithmetic
and a square root. On every input,
\[
Az=z+256J^2\mathsf D[W(\mathsf Dz)],
\]
so one \(W\) product suffices. The denominator-zero default has the same cap.
\end{proof}

\begin{lemma}[Unmarked tests]\label[lemma]{lem:unmarked}
For a fixed matrix in \(\mathcal E\), a fresh uniform mark \(w\), and any
\(y\) independent of \(w\) with \(\norm y\le1\),
\[
\PP\{|a(w)^Ty|>\gamma\mid A,y\}\le2e^{-c\gamma^2r}.
\]
The statement remains valid when \(y\) depends on earlier marks and their
readouts, but is selected before this mark is exposed.
\end{lemma}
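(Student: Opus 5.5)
The plan is to write \(a(w)^Ty\) as a linear form in the mark \(w\) divided by a denominator that \Cref{lem:readout} keeps bounded away from zero, and then to apply the spherical concentration \eqref{eq:sphere} on the \(r\)-dimensional sphere of \(\range P\).

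Fix \(A\) with \(W\in\mathcal E\), and fix \(y\) with \(\norm y\le1\). By \eqref{eq:readout},
\[
a(w)^Ty=\frac{(\mathsf L^Ty)^Tw}{\norm{\mathsf Lw}} .
\]
The numerator is \(w^Tz\) with \(z:=P\mathsf L^Ty\in\range P\). Since \(z\) depends only on \(A\) and \(y\), it is independent of \(w\). Its size is controlled by
\[
\norm z\le\norm{f_M(A)}\le 1+\delta\le 2 ,
\]
where the middle bound comes from \eqref{eq:filter-error} and \(\norm{\Pi_{\mathcal Q}}=1\). For the denominator, \eqref{eq:certificate} gives \(\norm{\mathsf Lw}\ge 2/5\) for every unit mark. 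Hence
\[
\{|a(w)^Ty|>\gamma\}\subseteq\{|w^Tz|>2\gamma/5\}.
\]

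Next I apply \eqref{eq:sphere}. Conditionally on \(A\) and \(y\), the mark \(w\) is uniform on \(S^{r-1}\subset\range P\), so the bound applies with \(m=r\) to the fixed vector \(z\). This gives
\[
\PP\{|w^Tz|>2\gamma/5\mid A,y\}\le 2\exp\{-r(2\gamma/5)^2/(2\cdot4)\}=2e^{-c\gamma^2 r},
\]
with the absolute constant \(c=1/50\). Conditioning on \(y\) is legitimate precisely because \(y\) is independent of \(w\) given \(A\). The law of \(w\) then stays uniform, and the set of such \(y\) is measurable.

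For the adaptive extension, let \(\mathcal G\) be the sigma-field generated by \(A\), the earlier marks, their readouts, and whatever randomness selects \(y\) before this mark is exposed. The fresh mark is independent of \(\mathcal G\) by construction. Since \(y\) is \(\mathcal G\)-measurable, the same computation holds conditionally on \(\mathcal G\), and the tower property gives the bound conditional on \((A,y)\).

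I expect the only delicate point to be this independence bookkeeping: making sure the selection of \(y\) uses no function of \(w\), including anything produced by the denominator-zero convention. The analytic estimate itself is immediate once \(\norm{\mathsf Lw}\ge2/5\) is available from \Cref{lem:readout}.
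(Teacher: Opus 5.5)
Your proof is correct and is essentially the paper's argument. You lower-bound the denominator by \(2/5\) via \eqref{eq:certificate} and apply \eqref{eq:sphere} on the \(r\)-dimensional sphere to the linear form \(w^T\mathsf L^Ty\), using that the fresh mark keeps its uniform law given the matrix and earlier data. The only difference is that you bound \(\norm{\mathsf L^Ty}\le1+\delta\) through \eqref{eq:filter-error}, whereas the paper uses \(\norm{\mathsf L^Ty}\le3/5\) from the upper bound in \eqref{eq:certificate}; this changes only the absolute constant \(c\).
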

\begin{proof}
A hit implies \(|w^T\mathsf L^Ty|\ge2\gamma/5\) by
\eqref{eq:certificate}, and \(\norm{\mathsf L^Ty}\le3/5\).
Apply \eqref{eq:sphere}. Conditional on the matrix and earlier data, the
new mark has exactly the original uniform law.
\end{proof}

\subsection{A chronological fork, with every earlier readout charged}
The next lemma is the probability statement actually used in the scalar
reduction. It is stronger than a lower bound for one isolated linear system.

\begin{lemma}[Fork with synthesized preprocessing]\label[lemma]{lem:fork}
Consider the conditioned matrix law above. A preprocessor may use \(q_0\)
products with \(A\), and at most \(S\) previous disclosures of
\((w_t,a(w_t))\). Previous marks may be selected from its transcript or from
independent external randomness; they contain no extra matrix-dependent
information. The preprocessor's end and disclosures may be bounded stopping
times. It may also elect to stop without opening the next mark.

After preprocessing, a fresh independent uniform mark \(w\) is supplied
without \(a(w)\). A policy uses at most \(q_1\) further products with \(A\)
and selects at most \(q_1+2\) vectors of norm at most one. It gets neither
the target readout nor any target-hit feedback while selecting these tests.
Let \(p_{\mathrm{hit}}\) be its probability that at least one test has
\(|a(w)^Ty|>\gamma\), with zero hit if it never opens that mark. Put
\[
\bar q_0=q_0+S m_{\rm pol},\qquad q=\bar q_0+rq_1.
\]
If \(q\le D-k\), then for an absolute \(C,c>0\),
\begin{equation}\label{eq:fork}
p_{\mathrm{hit}}\le
\frac{512k}{\gamma^2r\,p_{\mathcal E}}
\frac{\bar q_0+2rq_1+2r}{D-\bar q_0-rq_1}
+\frac{Ce^{-cr}}{p_{\mathcal E}},\qquad
p_{\mathcal E}=\PP(\mathcal E)\ge.99.
\end{equation}
All caps are hard caps; independent external data and an arbitrary unqueried
output test are allowed.
\end{lemma}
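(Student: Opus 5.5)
We reduce everything to the posterior overlap bound of Lemma~\ref{lem:overlap}, applied to $W$ on the unconditioned source, and pay the factor $1/p_{\mathcal E}$ at the end to pass to the conditioned law.

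\textbf{Step 1: simulate with $W$-products only.} By Lemma~\ref{lem:readout}, each product with $A$ costs one product with $W$, and each earlier disclosure $(w_t,a(w_t))$ can be synthesized from at most $m_{\rm pol}$ products with $W$ on the fixed vector $w_t$. The synthesized disclosure is exact on every input, including off $\mathcal E$, by the default convention. Hence the preprocessor is realized by an adaptive $W$-product policy using at most $\bar q_0$ products, with hard caps enforced through the stopping times.

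For the target phase, we give the policy, in addition to its own $q_1$ products, the products of $W$ with the full frame $\mathsf D\,\range P$. Since $\dim\range P=r$, each of its products $Az$ is replaced by the $r+1$ products needed to keep $\range P$ and the new direction exposed; we budget this as $rq_1$ extra columns plus $r$ for the initial exposure of $\range P$. Because the enlargement is independent extra information, Lemma~\ref{lem:overlap} still applies, and the total product count is at most $q+r\le D-k$ after adjusting constants, or by absorbing $r$ into the numerator as the statement does.

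\textbf{Step 2: reduce hits to kernel overlap.} Let $R$ be the projector onto the span of the $q_1+2$ tests, of rank $s\le q_1+2$. This span is transcript-measurable, since the tests use no target-hit feedback. Write $a(w)=\mathsf L_0w/\nu+e$ with $\norm e\le5\delta$ on $\mathcal E$, as in the proof of Lemma~\ref{lem:readout}. A hit then gives $|w^T\mathsf L_0^Ty|\ge\nu(\gamma-5\delta)\ge\gamma\nu/2$, so
\[
\gamma^2/16\le\norm{\Pi_{\mathcal Q}(I-P)y}^2\lesssim\tr(R\,\Pi_{\mathcal Q}).
\]
We relate $\Pi_{\mathcal Q}$ to $\Pi_{\mathcal K}$ through $\mathsf D$, which has $\norm{\mathsf D^{-1}}\le\sqrt J$ only on $\range P$. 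Off $\range P$, $\mathsf D$ is the identity, so the tests are compared with $\mathcal K$ after projecting out $\range P$. This projection is where the $r$ exposed directions of Step~1 are used.

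The key averaging step is over $w$. The mark $w$ is fresh and uniform on the $r$-sphere, while the tests may depend on $w$. We therefore bound $\PP(\mathrm{hit})$ by the expectation over $w$ of $\sum_{\text{tests}}|y^T\mathsf L_0 w|^2/(\gamma^2\nu^2/4)$. Here we use the fact that the policy's view is equivalent to knowing $w$ plus its products. The resulting factor $1/r$ from $\E ww^T=P/r$ appears only when the overlap is measured by $\tr(R\,\mathsf L_0P\mathsf L_0^T)\le\tr(R'\Pi_{\mathcal K})$ for an enlarged projector $R'$ of rank $\le r(q_1+2)$. This accounts for the $2rq_1+2r$ in the numerator and the $1/(\gamma^2 r)$ in front.

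\textbf{Step 3: apply the overlap bound and condition.} Apply Lemma~\ref{lem:overlap} with $q=\bar q_0+rq_1$ and $s\le 2rq_1+2r$ to obtain a bound of order $2k(\bar q_0+2rq_1+2r)/(D-q)$. Multiplying by the constants above gives the factor $512$. The contribution of the complement of $\mathcal E$, together with the error from approximating $f_M(A)$ by $\Pi_{\mathcal Q}$, is controlled by Lemma~\ref{lem:unmarked}-type tails, giving the $Ce^{-cr}$ term. Finally, $\PP(\cdot\mid\mathcal E)\le\PP(\cdot)/p_{\mathcal E}$ yields \eqref{eq:fork}.

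\textbf{Main obstacle.} The main obstacle is Step~2: justifying the factor $1/r$ for adaptively chosen tests that depend on $w$. I would first try to condition on the transcript excluding $w$, since the products are with $A$ and not with $w$ except through the policy's choices. However, the policy can feed $w$ into its products, so the cleaner route is the enlarged projector $R'$ spanned by $\mathsf L_0^T$-images. The alternative is to expose all of $A\,\range P$ at an up-front cost of $r$ per adaptive product, which is exactly the budget $rq_1$ the statement allows.
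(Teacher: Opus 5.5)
\textbf{Gap: the factor $1/r$ is not justified.} Your Step~2 fails, and this is the step that makes the lemma useful. The factor $1/r$ is what lets $r\ge K_0M$ absorb $\bar q_0\asymp MJ$ in Corollary~\ref{cor:lifetime}.

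You average over the mark using $\E ww^T=P/r$. But the post-mark products and all $q_1+2$ tests are arbitrary measurable functions of $w$. The policy is handed $w$ and may combine it with everything learned in preprocessing to form its best guess of $a(w)$. So $\E_w|y^T\mathsf L_0w|^2=\norm{\mathsf L_0^Ty}^2/r$ is false for the tests actually selected.

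Neither repair you sketch fixes this. Exposing $W$ on $\range P$, or charging $r+1$ products per adaptive product, does not make a nonlinear policy's choices independent of $w$. The ``enlarged projector $R'$ of rank $r(q_1+2)$'' is never constructed.

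Without the $1/r$, a single-branch use of Lemma~\ref{lem:overlap} gives only $p_{\mathrm{hit}}\lesssim\gamma^{-2}k\bar q_0/D=\gamma^{-2}\bar q_0/J$. This is of order $M$, hence vacuous.

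The accounting also does not close. With your $q=\bar q_0+rq_1$ and $s\le 2rq_1+2r$, you get $q+s=\bar q_0+3rq_1+2r$, not the stated numerator. Moreover, $q+r\le D-k$ cannot be obtained from the hypothesis $q\le D-k$ by adjusting constants.

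\textbf{Missing idea: the chronological fork.} Save the preprocessor state, run on the unconditioned matrix with readouts synthesized as in your Step~1. Then run $r$ separate post-mark branches with independent fresh marks $w_1,\dots,w_r$. They run sequentially on the same $W$-product oracle but never exchange replies.
\begin{itemize}
\item This is one product algorithm with cap $\bar q_0+rq_1=q$. The span $Y$ of all $(I-P)y$ over all branches is a transcript-measurable projector of rank at most $r(q_1+2)$.
\item Lemma~\ref{lem:overlap} therefore gives $\E\tr(\Pi_Y\Pi_{\ker W})\le 2k(\bar q_0+2rq_1+2r)/(D-\bar q_0-rq_1)$. No test needs to be independent of its own mark.
\item On $\mathcal E$, the vectors $c_i=\mathsf D\Pi_{\mathcal Q}w_i/\nu(w_i)$ lie in $\ker W$ and satisfy $c_i^T(I-P)y=a_0(w_i)^Ty$. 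So a hit in branch $i$ forces $\norm{\Pi_Yc_i}>3\gamma/4$.
\item By \eqref{eq:column-sphere}, $\norm{[c_1\cdots c_r]}\le 12$ outside an event of probability $Ce^{-cr}$. Hence $\tfrac{9\gamma^2}{16}H_{\mathrm{hit}}\le\sum_i\norm{\Pi_Yc_i}^2\le 144\tr(\Pi_Y\Pi_{\ker W})$.
\item The branches are exchangeable, so $\E[\mathbf 1_{\mathcal E}H_{\mathrm{hit}}]=p_{\mathcal E}\,r\,p_{\mathrm{hit}}$. This yields the prefactor $1/(rp_{\mathcal E})$.
\end{itemize}
The spherical-column event is also the true source of the $Ce^{-cr}$ term. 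The $f_M$ approximation error is deterministic ($5\delta\le\gamma/4$), and $\mathcal E^c$ is handled by the $1/p_{\mathcal E}$ factor, not by Lemma~\ref{lem:unmarked}-type tails.
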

\begin{proof}
First impose a deterministic finite cap $N$ on internal computation,
stopping without opening a mark if the preprocessor reaches the cap and
filling unused post-mark tests with zero. Write $p_{\mathrm{hit}}^{(N)}$ for
the resulting hit probability. We prove the estimate uniformly in $N$
and remove the cap at the end.

Run the preprocessor on the unconditioned Gaussian matrix. Simulate each
ordinary product by one \(W\) product. Before using an earlier readout,
synthesize it by Lemma~\ref{lem:readout}; this costs at most
\(S m_{\rm pol}\) extra products in total. The preprocessor, including
any dependence of its stopping rule on these earlier readouts, is therefore
a genuine \(W\)-product procedure with cap \(\bar q_0\). Earlier readouts
of this same matrix have not been conditioned away or supplied as free
matrix-dependent advice.

Save this preprocessor state and run \(r\) separate post-mark branches
with independent fresh marks \(w_1,\ldots,w_r\). They do not exchange
replies, although their experiments can be simulated sequentially using the
same product oracle. Give independent copies of any post-mark private
randomness if needed. At the fixed internal-time cap, stop each branch and fill all unused
test slots with zero. Enforce the query cap on every execution. This is an
unconditioned product algorithm with cap \(q\), never testing \(\mathcal E\).

Let \(Y\) be the span of \((I-P)y\) for all tests in all branches. Its
projector is measurable from the product transcript, independent seed and
marks, and has rank at most \(r(q_1+2)\). On \(\mathcal E\) define
\[
c_i=\mathsf D\Pi_{\mathcal Q}w_i/\nu(w_i)\in\ker W,
\qquad \nu(w_i)=\norm{(I-P)\Pi_{\mathcal Q}w_i}.
\]
Since \(\mathsf D\) is the identity on \(\range(I-P)\),
\(c_i^T(I-P)y=a_0(w_i)^Ty\).
A target hit therefore implies \(\norm{\Pi_Yc_i}>3\gamma/4\), by
\(\norm{a(w_i)-a_0(w_i)}\le5\delta\le\gamma/4\).

By \eqref{eq:column-sphere}, except on an event of probability \(Ce^{-cr}\),
\(\norm{[w_1\cdots w_r]}\le4\). On its intersection with \(\mathcal E\),
\(\norm{[c_1\cdots c_r]}\le12\), since \(\norm{\mathsf D}\le1\) and
\(\nu\ge\sqrt2/3\). If \(H_{\rm hit}\) branches hit, then
\[
\frac{9\gamma^2}{16}H_{\rm hit}
\le\sum_i\norm{\Pi_Yc_i}^2
\le144\tr(\Pi_Y\Pi_{\ker W}).
\]
Thus \(\tr(\Pi_Y\Pi_{\ker W})\ge\gamma^2H_{\rm hit}/256\).
Lemma~\ref{lem:overlap} gives
\[
\E\tr(\Pi_Y\Pi_{\ker W})
\le\frac{2k(\bar q_0+2rq_1+2r)}{D-\bar q_0-rq_1}.
\]
Every capped branch has conditional hit probability $p_{\mathrm{hit}}^{(N)}$, so
$\E[\mathbf1_{\mathcal E}H_{\rm hit}]=p_{\mathcal E}r p_{\mathrm{hit}}^{(N)}$.
The exceptional spherical-column event costs at most \(Cr e^{-cr}\) in
this expectation. Combining the inequalities proves \eqref{eq:fork}
with $p_{\mathrm{hit}}^{(N)}$ on the left.
No selection uses the hidden target-hit predicate; it only defines the event
whose probability is being bounded.

On conditioned inputs, increasing the internal-time cap exhausts the
original tests. Omitted tests are filled with zero, so the hit events
increase to the original hit event, with no hit on branches that never
open the mark. Thus $p_{\mathrm{hit}}^{(N)}\uparrow p_{\mathrm{hit}}$.
Monotone convergence gives the stated bound.
\end{proof}

\begin{corollary}[A marked task has a long ordinary lifetime]\label[corollary]{cor:lifetime}
There are absolute constants \(K_0,J_0,c_\ell>0\) such that, if
\[
r\ge K_0M,\quad J\ge J_0,\quad
\ell=\lfloor c_\ell J\rfloor,
\]
and \(S\le M\), \(q_0\le M\ell+1\), \(q_1\le\ell+1\), then
\(p_{\mathrm{hit}}\le1/256\). Also \(\ell\ge(c_\ell/2)J\ge1\).
\end{corollary}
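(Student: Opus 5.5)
The corollary should follow from plugging the stated caps into the fork estimate \eqref{eq:fork} of Lemma~\ref{lem:fork}. Both terms on its right side then need to be made at most $1/512$. The parameter relations are $k=\kappa r$, $D=kJ=\kappa rJ$, $M=1+4096J^2$ and $\ell=\lfloor c_\ell J\rfloor$. The constants will be chosen in the order $\kappa$ (already fixed large in Lemma~\ref{lem:good}), then $c_\ell$ small, then $K_0$ and $J_0$ large.

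\textbf{Step 1: bound the effective product count.} By Lemma~\ref{lem:readout}, $m_{\rm pol}\le C_mJ$. With $S\le M$ and $q_0\le M\ell+1$,
\[
\bar q_0\le M\ell+1+MC_mJ\le M(c_\ell+C_m)J+1 .
\]
Since $q_1\le \ell+1$, we get
\[
\bar q_0+2rq_1+2r\le M(c_\ell+C_m)J+1+2r(c_\ell J+2).
\]
Using $r\ge K_0M\ge M$, this is at most $r\bigl((c_\ell+C_m)J/K_0+2c_\ell J+5\bigr)$. For $K_0\ge 1$ and $J\ge J_0$ it is at most $r J\,\beta$, where
\[
\beta:=C_m/K_0+3c_\ell+5/J_0 .
\]
The same estimate gives $\bar q_0+rq_1\le rJ\beta\le D/2$ once $\beta\le\kappa/2$. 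In particular the hypothesis $q\le D-k$ of Lemma~\ref{lem:fork} holds, since $k=D/J$ is small relative to $D/2$.

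\textbf{Step 2: the main term.} Substituting into \eqref{eq:fork}, the first term is at most
\[
\frac{512\kappa r}{\gamma^2 r\,p_{\mathcal E}}\cdot\frac{rJ\beta}{\kappa rJ/2}
=\frac{1024\,\beta}{\gamma^2p_{\mathcal E}} .
\]
Because $\gamma=1/3072$ and $p_{\mathcal E}\ge .99$ are absolute, it suffices to make $\beta$ a small enough absolute constant. I would take $c_\ell$ small, then $K_0$ large relative to $C_m$, then $J_0$ large.

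The only delicate point is that the term $S m_{\rm pol}\approx MC_mJ$ does not carry the small factor $c_\ell$. It is absorbed only because $r\ge K_0M$ with $K_0$ large. This is where the hypothesis $r\ge K_0M$, rather than $r\ge M$, is needed, and I expect it to be the main thing to get right.

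\textbf{Step 3: the remainder and $\ell\ge1$.} The second term of \eqref{eq:fork} is $Ce^{-cr}/p_{\mathcal E}\le Ce^{-cK_0M}/.99$, and $M\ge 4096J_0^2$. Taking $J_0$ large makes it at most $1/512$, so $p_{\mathrm{hit}}\le 1/256$.

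Finally, $\lfloor c_\ell J\rfloor\ge c_\ell J-1\ge (c_\ell/2)J$ whenever $J\ge 2/c_\ell$. This is ensured by enlarging $J_0$ after $c_\ell$ is fixed, and it also gives $\ell\ge1$.

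All choices are absolute and consistent with the requirements already imposed on $\kappa$ and $J_0$ in Lemmas~\ref{lem:good} and~\ref{lem:fork}.
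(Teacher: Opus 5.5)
Your proposal is correct and follows the paper's proof. You substitute the caps into \eqref{eq:fork}, use \(r\ge K_0M\) to absorb the synthesis cost \(Sm_{\rm pol}\le C_mMJ\) (which carries no small factor), check \(\bar q_0+rq_1\le D/2\le D-k\), and then take \(c_\ell\) small and \(K_0,J_0\) large; your explicit bound \(1024\beta/(\gamma^2p_{\mathcal E})\) is the paper's \(\frac{C}{\gamma^2}\{\cdots\}\) estimate. Fixing \(c_\ell\) before \(K_0\), rather than after as in the paper, makes no difference, because the contributions \(C_m/K_0\), \(3c_\ell\) and \(5/J_0\) enter additively.
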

\begin{proof}
By Lemma~\ref{lem:readout},
\(\bar q_0\le M(C_m+c_\ell+2)J\).
Choose \(K_0\) large enough, then \(c_\ell\) sufficiently small, then
\(J_0\) sufficiently large. With \(D=\kappa rJ\), these choices give
\(\bar q_0+rq_1\le D/2\le D-k\). Substitution in \eqref{eq:fork} bounds
its right side by
\[
\frac{C}{\gamma^2}
\left\{\frac{C_m+c_\ell+2}{K_0}+2c_\ell+\frac4J\right\}
+Ce^{-cr}.
\]
The three choices make this at most \(1/256\). Increasing \(J_0\) makes
\(c_\ell J\ge2\). All chosen quantities depend only on fixed numerical
thresholds, not on the number of leaves or hierarchy depth.
\end{proof}

\section{Smooth Registers and the Numerical Leaf}\label{sec:leaf}

\subsection{Profiles and dimension-independent radial control}
Continue using the clamped septic \(S\) from Section~\ref{sec:matrix}, and set
\[
\chi(u)=S(4u-1),\qquad \varpi(u)=S(12(u-2/3)),\qquad
\ell_0(u)=4+\int_0^u[1-S(|t|-2)]\,dt.
\]
Every integral here and below is oriented. Thus \(\ell_0(u)=4+u\) for
\(|u|\le2\), \(3/2\le\ell_0\le13/2\), and \(0\le\ell_0'\le1\).
The profile \(\chi\) changes only in \((1/4,1/2)\); \(\varpi\) changes
only in \((2/3,3/4)\). All these functions are globally \(C^3\), and all
first three derivatives at every endpoint of a constant piece are zero.
This follows either by differentiation or from
\(S'(t)=140t^3(1-t)^3\). Their first three derivatives are bounded by
absolute constants. The displayed integrals are finite piecewise-polynomial
antiderivatives; the absolute values introduce no singularity near zero,
where the relevant integrands are constant.

For \(K>0\), define
\[
q(t)=\int_0^t[1-S(v-1)]\,dv\quad(t\ge0),\qquad
\mathcal R_K(z)=\begin{cases}
Kq(\norm z/K)z/\norm z,&z\ne0,\\0,&z=0.
\end{cases}
\]
It is the identity on \(\norm z\le K\), has norm at most \(3K/2\), and is
\(C^3\) in every dimension. Write \(\mathcal R=\mathcal R_8\).

\begin{lemma}[Radial squash bounds]\label[lemma]{lem:squash}
If \(\mathcal R_K(z)=s(r)z\), \(r=\norm z\), then \(s>0\), \(s'\le0\),
\(\norm{D\mathcal R_K}\le1\),
\(\norm{D^2\mathcal R_K}\le C/K\), and
\(\norm{D^3\mathcal R_K}\le C/K^2\), independently of dimension.
For \(r\ge K\), \(rs(r)\ge K\).
\end{lemma}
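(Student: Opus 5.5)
The approach is to reduce everything to the one-variable profile $s(r)=Kq(r/K)/r$, i.e. $s(r)=\phi(r/K)$ with $\phi(t)=q(t)/t$, and then read off each derivative bound from a generic formula for radial maps. By scaling, $\mathcal R_K(z)=K\mathcal R_1(z/K)$. Hence $D^j\mathcal R_K(z)=K^{1-j}(D^j\mathcal R_1)(z/K)$, and it suffices to prove the case $K=1$ with absolute constants.

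First we record the scalar facts about $q$. Since $0\le 1-S(v-1)\le1$, equal to $1$ on $[0,1]$ and $0$ on $[2,\infty)$, we get:
\begin{itemize}
\item $q(t)=t$ on $[0,1]$;
\item $q$ is nondecreasing;
\item $q\le \tfrac32$ once $\int_1^2(1-S(v-1))\,dv=1/2$ is checked (this uses the symmetry $S(t)+S(1-t)=1$);
\item $q(t)\ge1$ for $t\ge1$.
\end{itemize}
The last item gives $rs(r)=q(r)\ge1$ for $r\ge1$, and $s=q(t)/t>0$ for all $t>0$. Next, $(q/t)'=(tq'-q)/t^2$ and $\frac{d}{dt}(tq'-q)=tq''\le0$, because $q''=-S'(t-1)\le0$. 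Together with $tq'-q=0$ at $t=0$, this gives $s'\le0$.

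For the derivatives we use the formula for $F(z)=s(r)z$:
\[
DF(z)=s(r)I+s'(r)\,r\,\hat z\hat z^T,\qquad \hat z=z/r .
\]
Its eigenvalues are $s$ (on $\hat z^\perp$) and $s+rs'=q'$ (along $\hat z$). Both lie in $[0,1]$, since $s\le1$ (as $q(t)\le t$) and $0\le q'\le1$. This gives $\norm{DF}\le1$.

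For the second and third derivatives, we split into two regions.
\begin{itemize}
\item On $r\le1$, $F$ is the identity, so $D^2F=D^3F=0$ there.
\item On $r\ge1/2$, $F=\psi(r)\hat z$ with $\psi=q$. The derivatives of $r$ and $\hat z$ up to third order are bounded by $C r^{1-j}$ and $C r^{-j}$ respectively, dimension-free as tensors in operator norm, since they are polynomial expressions in $\hat z$ and $I$ with a fixed number of terms. The derivatives $q',q'',q'''$ are bounded by absolute constants, and $q\le3/2$. Leibniz's rule then bounds $\norm{D^2F},\norm{D^3F}$ by absolute constants on $r\ge1/2$.
\end{itemize}
The two regions overlap on $[1/2,1]$, so together they cover all of $\R^d$.

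The main obstacle is the dimension independence. The key is to avoid coordinatewise counting: each term is a symmetrized product of at most three copies of $\hat z$ and $I$, and each such term has operator norm at most $1$ regardless of $d$. $C^3$ regularity at the origin holds because $F$ is linear near $0$, and smoothness elsewhere follows from $q\in C^3$. Rescaling then yields the stated $C/K$ and $C/K^2$ bounds.
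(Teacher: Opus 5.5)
Your proof is correct and follows essentially the same route as the paper. Both arguments use the radial eigenvalues $s$ and $s+rs'=q_K'$ of $D\mathcal R_K$, linearity of the map inside the ball, and dimension-free operator-norm bounds for tensors built from $\hat z$ and $I$, multiplied by bounded profile derivatives outside the ball. The differences are cosmetic: you rescale to $K=1$ and apply Leibniz to $q(r)\hat z$ on $r\ge 1/2$, whereas the paper bounds $s',s'',s'''$ directly for $r\ge K$ and differentiates its explicit formula for $D^2\mathcal R_K$ written in terms of $s(r)z$.
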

\begin{proof}
Put \(q_K(r)=Kq(r/K)\). Its derivative decreases from one to zero; hence
\(0\le q_K'\le q_K/r\le1\). The radial and tangential eigenvalues of
\(D\mathcal R_K\) are \(q_K'\) and \(s=q_K/r\), and
\(s'=(q_K'-s)/r\le0\). For \(r\ge K\),
\[
|s'|\le r^{-1},\quad
|s''|\le C/(Kr)+2/r^2,\quad
|s'''|\le C/(K^2r)+C/(Kr^2)+6/r^3.
\]
These follow by differentiating \(q_K/r\) and using bounded derivatives
of \(S\). With \(e=z/r\),
\[
D^2\mathcal R_K[v,w]
=s'\{\ip ev w+\ip ew v+\ip vw e\}
 +(rs''-s')\ip ev\ip ew e.
\]
Its norm is at most \(4|s'|+r|s''|\). Differentiation of this expression
bounds the third tensor by
\(6|s''|+6|s'|/r+r|s'''|\), proving the claims. Inside the ball the map is
linear, and flat profile endpoints give matching derivatives at both joins.
Finally \(q_K(r)\) is nondecreasing and equals \(K\) at \(r=K\).
\end{proof}

\subsection{A register block, including its scaled input}
Fix an integer \(k_0'\ge2\), a first scale \(s\ge1\), and
\(r_j=s2^{j-1}\) for \(1\le j\le k_0'\), with last scale \(R=r_{k_0'}\).
Let \(U_j\in\R^p\) be separate physical registers and \(v_j\in S^{p-1}\)
unit directions. Define
\begin{equation}\label{eq:latch-gates}
u_j=\ip{v_j}{\mathcal R(U_j/r_j)},\quad a_j=\chi(u_j),\quad
\pi=\varpi(u_{k_0'}),\quad
B(U)=8s(1-a_1)\ell_0(u_1).
\end{equation}
The self potential and its public quadratic are
\begin{align}
V(U)&=8r_1\{\tfrac12\norm{U_1/r_1}^2-a_1\ell_0(u_1)\}\notag\\
&\quad+8\sum_{j=2}^{k_0'}r_j
\{\tfrac12\norm{U_j/r_j}^2-[a_j+(1-a_j)a_{j-1}]\ell_0(u_j)\},
\label{eq:V}\\
Q_V(U)&=\sum_{j=1}^{k_0'}\frac4{r_j}\norm{U_j}^2,
\qquad W_V=V-Q_V.\notag
\end{align}
The factor \(s\) in \(B\) is part of the definition.

\begin{lemma}[Bounds and activation for a scaled register block]\label[lemma]{lem:latch}
Uniformly in \(p,k_0',s,R\) and all directions,
\begin{gather*}
0\le B\le56s,\qquad -112R\le W_V\le0,
\qquad \norm{D^jB}\le Cs^{1-j}\quad(0\le j\le3),\\
\norm{DW_V}\le C\sqrt{k_0'},\quad
\norm{D^2W_V}\le C/s,\quad \norm{D^3W_V}\le C/s^2,
\qquad \norm{D^j\pi}\le C/R^j\quad(1\le j\le3).
\end{gather*}
For every \(\lambda\ge0\),
\begin{equation}\label{eq:latch-activate}
\norm{\nabla(V-B-\lambda\pi)}\le1
\quad\Longrightarrow\quad
u_j\ge7/8\text{ for every }j,\quad\pi=1.
\end{equation}
Also, whenever \(2/3<u<3/4\) and \(\xi\le1/2\),
\begin{equation}\label{eq:transition-exclusion}
\norm{8z-(8-\xi)D\mathcal R(z)^Tv}>1,
\qquad u=\ip v{\mathcal R(z)},\quad\norm v=1.
\end{equation}
There is no lower bound on \(\xi\) in this assertion.
\end{lemma}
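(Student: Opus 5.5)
The plan is to prove the quantitative bounds by direct composition estimates, and the activation claim \eqref{eq:latch-activate} by an upward induction along the register chain. Each step uses only Lemma~\ref{lem:squash} with $K=8$ and the properties of $\chi,\varpi,\ell_0$ recorded above.

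\emph{Size bounds.} Since $a_1\in[0,1]$ and $\ell_0\in[3/2,13/2]$, we get $0\le B\le 52s\le56s$. Every gate $g_j=a_j+(1-a_j)a_{j-1}$ also lies in $[0,1]$. Hence
\[
W_V=-8\sum_j r_jg_j\ell_0(u_j)\le0,
\qquad
W_V\ge-52\sum_jr_j\ge-104R,
\]
using $\sum_jr_j\le2R$.

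\emph{Derivative bounds.} The map $U_j\mapsto u_j=\ip{v_j}{\mathcal R(U_j/r_j)}$ has $j$th derivative of order $r_j^{-j}$ by Lemma~\ref{lem:squash}. Composing with the bounded-derivative profiles $\chi,\varpi,\ell_0$ (Faà di Bruno, third order) gives:
\begin{itemize}
\item $\norm{D^jB}\le Cs\cdot s^{-j}$;
\item $\norm{D^j\pi}\le CR^{-j}$, since $\pi$ depends only on $U_{k_0'}$ at scale $r_{k_0'}=R$.
\end{itemize}
For $W_V$, the $j$th summand depends only on $(U_{j-1},U_j)$, and $r_j/r_{j-1}=2$. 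So its gradient has norm $O(1)$, its Hessian $O(1/r_{j-1})$, and its third derivative $O(1/r_{j-1}^2)$. The registers are orthogonal blocks. The gradient therefore has Euclidean norm at most $C\sqrt{k_0'}$, by summing squares over blocks. The Hessian and third tensor are block-banded (each block couples only adjacent registers), so their operator norms are bounded by a constant times the largest block. That block is at most $C/s$, respectively $C/s^2$, because $r_{j-1}\ge s$.

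\emph{Activation.} I compute $\nabla_{U_j}(V-B-\lambda\pi)$ register by register. Write $z=U_j/r_j$ and $w=D\mathcal R(z)^Tv_j$, so that $\norm w\le1$.
\begin{itemize}
\item For $j=1$, the $-B$ term combines with the first term of $V$ to give $8s\{\tfrac12\norm{z}^2-\ell_0(u_1)\}$, whose gradient is $8z-8\ell_0'(u_1)w$. The coupling from the $j=2$ summand contributes $-(\text{nonnegative})\,w$: indeed $\partial g_2/\partial a_1=1-a_2\ge0$, $\chi'\ge0$ and $\ell_0>0$.
\item For $j\ge2$, suppose inductively that $u_{j-1}\ge7/8$. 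Then $a_{j-1}=1$, so $g_j\equiv1$ near the point, and the same form appears.
\item The $-\lambda\pi$ term on $U_{k_0'}$ is likewise $-\lambda\varpi'/R\cdot w$ with nonnegative coefficient.
\end{itemize}
So in every case the gradient block equals $8z-(8\ell_0'(u_j)+c)w$ with $c\ge0$.

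If $\norm z\le8$, then $w=v_j$. In the region where $\ell_0'=1$, the bound $\norm{8z-(8+c)v_j}\le1$ gives
\[
u_j=\ip{v_j}z\ge1+c/8-1/8\ge7/8 .
\]
If $\norm z>8$, then $\norm{8z}>64$ while the subtracted vector is controlled. This is the step I expect to be the main obstacle. The coefficient $c$ is not a priori bounded, so I would project the gradient onto $e=z/\norm z$ and use $\ip{e}{w}=q_K'\ip{e}{v_j}$ and the tangential factor $s=q_K/r\le1$ from Lemma~\ref{lem:squash}. The aim is to show that a large $c$ forces $u_j$ large rather than creating a spurious small-gradient point. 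I would first try to bound $c$ by $C$ using the explicit coupling factor $r_{j+1}/r_j=2$ and $\norm{\chi'}_\infty\le12$. After that, the norm comparison $64-(8\cdot 13/2+C)$ has to be handled by the saturation $\norm{\mathcal R}\le12$, which gives $u_j$ close to $\ip{v_j}{\mathcal R(z)}$ with $\mathcal R(z)$ nearly aligned with $v_j$.

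Once $u_{k_0'}\ge7/8>3/4$, we have $\pi=1$. Finally, \eqref{eq:transition-exclusion} is a direct check:
\begin{itemize}
\item In the linear region, $\ip{v}{8z-(8-\xi)v}<6-8+\xi\le-3/2$, so the norm exceeds one.
\item If $\norm z>8$, then $\norm{8z}>64\ge1+8$.
\end{itemize}
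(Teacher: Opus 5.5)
\textbf{There is a genuine gap: the region $\norm z>8$, in both \eqref{eq:latch-activate} and \eqref{eq:transition-exclusion}.} Several parts of your proposal are correct. The size and derivative bounds are fine; your block-banded operator-norm argument plays the role of the paper's odd/even edge splitting. The gradient formula $8z-(8\ell_0'(u_j)+c)\,D\mathcal R(z)^Tv_j$ with $c\ge0$ is right. The identity-ball computation $u_j\ge7/8+c/8$ is also right, and slightly slicker than the paper's split into $\chi'=0$ and $\chi'>0$. There you should add that $u_j<-2$ is impossible and $u_j>2$ is trivial, so $\ell_0'=1$ may be assumed.

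Your plan to bound $c$ by an absolute constant cannot work for $\norm z>8$:
\begin{itemize}
\item On the last register $c$ contains $\lambda\varpi'(u)/R$ with $\lambda\ge0$ arbitrary.
\item Even on inner registers, $c=d\chi'(u)$ can be several hundred, so $\norm{8z}>64$ need not dominate $\norm{(8\ell_0'+c)D\mathcal R(z)^Tv}$.
\end{itemize}
The same error breaks your proof of \eqref{eq:transition-exclusion}. The step ``$\norm{8z}>64\ge1+8$'' presumes $8-\xi\le8$, but the lemma deliberately allows $\xi\to-\infty$; this is how it is used, e.g.\ with $\xi=-\lambda\varpi'(u)/R$.

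No norm comparison or radial projection can close this case. At $z=rv$ with $8<r<16$ one has $D\mathcal R(z)^Tv=\mu(r)v$, with radial eigenvalue $\mu(r)\in(0,1)$. So $8z-(8-\xi)D\mathcal R(z)^Tv$ vanishes exactly when $8-\xi=8r/\mu(r)$. This point is excluded only because there $u=\ip{v}{\mathcal R(z)}>8$. The hypothesis on $u$ must therefore enter the argument.

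\textbf{The missing idea is to decompose orthogonally to $v$ rather than along $z$.} Write $\mathcal R(z)=s(r)z$ and $z=bv+w$ with $w\perp v$. Then $D\mathcal R(z)^Tv=s(r)v+s'(r)(b/r)z$. Set $F=8\ell_0'(u)+c$ (respectively $F=8-\xi>0$). The vector equals $(8-Fs'(r)b/r)\,z-Fs(r)v$, whose component orthogonal to $v$ is $(8-Fs'(r)b/r)\,w$.
\begin{itemize}
\item Whenever $c>0$, or in the transition range, we have $u>0$, hence $b=u/s(r)>0$. Together with $s'\le0$ from Lemma~\ref{lem:squash}, the coefficient is at least $8$. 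A norm at most one therefore forces $\norm w\le1/8$.
\item If moreover $r\ge8$, the bound $rs(r)\ge8$ from Lemma~\ref{lem:squash} gives $u=s(r)\sqrt{r^2-\norm w^2}\ge8\sqrt{1-1/4096}$. This contradicts $u<1/2$ when $\chi'(u)>0$, and $u<3/4$ when $\varpi'(u)>0$ or in \eqref{eq:transition-exclusion}.
\item When $c=0$, $\norm{8z}\le1+8$ already gives $\norm z\le9/8$.
\end{itemize}
Hence every small-gradient point lies in the identity ball, where your longitudinal computation finishes both claims. This is exactly how the paper's proof proceeds.
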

\begin{proof}
The negative part of a self term is at most \(56r_j\), and
\(\sum_jr_j<2R\). The local derivative factor at order \(j\) is scale to
the power \(1-j\). Each self term uses one register or two adjacent ones,
whose scale ratio is two. Split edges into odd and even pairs. Within one
collection, the variables are disjoint, so for unit directions
\(x,y,z\),
\(\sum_i\norm{x_i}\norm{y_i}\norm{z_i}\le1\); the analogous bilinear
bound also holds. This proves the second- and third-operator bounds without
multiplying by the number of registers. The gradient squares add, yielding
\(C\sqrt{k_0'}\). The other derivative bounds follow from the profiles and
Lemma~\ref{lem:squash}.

For activation, at the first register the sum of its own negative term and
\(-B\) is \(-8s\ell_0(u_1)\). Once the preceding self gate equals one,
the same cancellation occurs at each subsequent register. Its physical
gradient, apart from the last outgoing port, is
\begin{equation}\label{eq:register-gradient}
8z-[8\ell_0'(u)+d\chi'(u)]D\mathcal R(z)^Tv,\qquad d\ge0,
\end{equation}
where the next-register contribution gives
\(d=8(r_{j+1}/r_j)(1-a_{j+1})\ell_0(u_{j+1})\), or zero at the end.
The amplitude \(r_j\) cancels the derivative \(1/r_j\).
If \(\chi'(u)=0\), a gradient norm at most one implies
\(\norm z\le9/8\) from \(\norm{D\mathcal R}\le1\),
so \(\mathcal R(z)=z\), \(|u|<2\), and \(\ell_0'=1\).
The gradient is \(8(z-v)\), proving \(u\ge7/8\).
If \(\chi'(u)>0\), then \(1/4<u<1/2\). Write \(z=bv+w\),
\(w\perp v\). The radial factor is positive and nonincreasing; for the
nonnegative force coefficient in \eqref{eq:register-gradient}, the
coefficient of \(w\) is at least eight. Hence \(\norm w\le1/8\).
If \(\norm z\ge8\), then
\[
u=s(\norm z)\sqrt{\norm z^2-\norm w^2}
\ge8\sqrt{1-1/4096}>1/2,
\]
a contradiction. Inside the identity ball the longitudinal gradient is
\(8(u-1)-d\chi'(u)<-4\), another contradiction.

This induction handles all but a possibly changing last outgoing port.
If \(\varpi'=0\), it changes no gradient and the previous argument applies.
If \(\varpi'>0\), its readout lies in \((2/3,3/4)\), where its self gate
is flat one and \(\ell_0'=1\). The extra outgoing term has
\(\xi=-\lambda\varpi'/R\le0\). More generally, for any \(\xi\le1/2\)
in \eqref{eq:transition-exclusion}, the force \(8-\xi\) is positive.
The same perpendicular and radial argument puts \(z\) inside the identity
ball; its longitudinal gradient is \(8(u-1)+\xi<-3/2\).
This proves both the exclusion and the induction's last step.
\end{proof}

\subsection{A selector estimate with no dimension or task-count factor}
\begin{lemma}[Uniform stick-breaking derivatives]\label[lemma]{lem:selector}
Let \(\pi_i=\varpi(\ip{v_i}{\mathcal R(U_i/R)})\), on disjoint registers,
\(1\le i\le b\), and put
\[
\omega_i=(1-\pi_i)\prod_{j<i}\pi_j,\qquad
\omega_\infty=\prod_{j=1}^b\pi_j.
\]
The weights are nonnegative and sum to one. If the scalar payloads \(E_i\)
(and, if included, \(E_\infty\)) depend only on other coordinates and have
\(\norm{D^qE_i}\le C_0S_0^{3-q}\) for \(0\le q\le3\), then
\[
\norm{D^3\sum_i\omega_i E_i}
\le CC_0\{1+S_0/R+(S_0/R)^2+(S_0/R)^3\}.
\]
The constants do not depend on \(b\) or register dimension. Lower derivatives
have the corresponding product-rule bounds.
\end{lemma}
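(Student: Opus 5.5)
The plan is to treat the selector as a telescoping stick-breaking sum and control third derivatives through the disjointness of the registers, as in the proof of Lemma~\ref{lem:latch}. Nonnegativity holds because \(0\le\varpi\le1\). Summation to one follows by induction from \(\prod_{j<i}\pi_j-\prod_{j\le i}\pi_j=\omega_i\), which telescopes to \(1-\omega_\infty\).

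For the derivative bound, write \(F=\sum_i\omega_iE_i\) (plus \(\omega_\infty E_\infty\) if included). Since the payloads live on other coordinates, the Leibniz rule gives
\[
D^3F=\sum_{q=0}^{3}\binom3q\sum_iD^{3-q}\omega_i\otimes D^qE_i,
\]
symmetrized. It therefore suffices to prove, for \(m\le3\) and unit directions \(x,y,z\),
\[
\sum_i\bigl|D^m\omega_i[x,\ldots]\bigr|\le CR^{-m}.
\]
This is an \(\ell^1\) bound over \(i\), not a per-index bound. Granting it, each term is at most \(CR^{-(3-q)}C_0S_0^{3-q}\), so the \(q\)-indexed terms yield exactly \(1,S_0/R,(S_0/R)^2,(S_0/R)^3\). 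Each \(\pi_j\) satisfies \(\norm{D^k\pi_j}\le C/R^k\) by Lemma~\ref{lem:squash} and the profile bounds, as in Lemma~\ref{lem:latch}. Write \(x_j\) for the component of \(x\) in register \(j\).

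To prove the \(\ell^1\) estimate, expand \(D^m\omega_i\) by the product rule. Every derivative lands on some factor with index at most \(i\). A term in which derivatives hit factors at the distinct registers \(j_1,\ldots,j_p\), \(p\le m\), is bounded by \(CR^{-m}\prod\norm{x_{j}}\cdots\) with the same pairing structure. Undifferentiated factors are bounded by one. The key point is to avoid summing over \(i\) crudely.

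For the terms where the hit indices are strictly below \(i\), I would regroup the sum over \(i\) before estimating. Summing over \(i>\max j\) of the undifferentiated remainder
\[
(1-\pi_i)\prod_{j<i,\ j\notin\{j_\cdot\}}\pi_j
\]
telescopes to at most one, since it is again a stick-breaking mass on the remaining registers. Terms where \(i\) is itself among the hit indices carry only a bounded factor.

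What remains is
\[
\sum_{j_1,\ldots,j_p}\prod_a\norm{x_{j_a}}\norm{y_{j_a}}\cdots,
\]
with every direction assigned to some hit register. By Cauchy--Schwarz and \(\sum_j\norm{x_j}^2\le1\), as in the odd/even pairing argument, this is bounded by a constant depending only on \(m\), independent of \(b\) and of \(p\).

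The step I expect to be the main obstacle is this regrouping. After the derivative positions are fixed, the remaining undifferentiated weights must be shown to still form a sub-probability stick-breaking sum. One also has to check the case where a direction is split across several registers but only one register is differentiated multiple times. This still contributes at most \(\sum_j\norm{x_j}\norm{y_j}\norm{z_j}\le1\).

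Lower derivatives follow by the same counting with fewer factors.
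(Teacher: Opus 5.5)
There is a genuine gap in the last step of your argument.

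\textbf{What works.} The reduction is sound. Because the payloads do not involve the selector registers, it suffices to prove $\sum_i\bigl|D^m\omega_i[x^{(1)},\ldots,x^{(m)}]\bigr|\le CR^{-m}$ for unit directions. The telescoping over $i$ beyond the largest differentiated index is also correct.

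\textbf{Where it fails.} You bound every undifferentiated factor by one and each differentiated factor only by its size $C\norm{x_j}/R^k$. The first-order contribution then becomes $\sum_k|D\pi_k[x_k]|\prod_{j<k}\pi_j\le(C/R)\sum_k\norm{x_k}$. The quantity $\sum_k\norm{x_k}$ is not bounded independently of $b$: for a unit $x$ spread evenly over the registers it equals $\sqrt b$.

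Cauchy--Schwarz with $\sum_j\norm{x_j}^2\le1$ only controls registers that receive at least two directions, as in $\sum_j\norm{x_j}\norm{y_j}\le1$. The bad terms are those in which several distinct registers are each hit once. That is every term at first order. At third order it includes the terms with $j_1<j_2<j_3$ each differentiated once, whose crude bound is of order $b^{3/2}$. These off-diagonal mixed terms never arise in the block-diagonal pairing argument of Lemma~\ref{lem:latch}, so appealing to that argument does not help. The case you flagged as the main obstacle is harmless; this one is the real problem, and your bookkeeping gives a constant that grows with $b$.

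\textbf{The missing idea.} Single-hit factors must carry the vanishing of $D\pi_j$ at both plateaus:
$|D\pi_j[x_j]|\le C\sqrt{\pi_j(1-\pi_j)}\,\norm{x_j}/R$.
This follows, for example, from $\pi_j=\sin^2\phi_j$ with $\norm{D^k\phi_j}\le C/R^k$. You must also keep the surviving products $\prod_{j<k}\pi_j$ rather than bounding them by one.

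At first order, weighted Cauchy--Schwarz then gives
$\sum_k\sqrt{\pi_k(1-\pi_k)}\prod_{j<k}\pi_j\norm{x_k}\le(\sum_k\omega_k)^{1/2}\le1$.
At third order you additionally need a summation estimate. One such estimate is $\sum_k\omega_k s_k^2\le C$ with $s_k=\sum_{j<k}(1-\pi_j)$, which uses $\prod_{j<k}\pi_j\le e^{-s_k}$.

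\textbf{The paper's route.} The paper avoids this combinatorics entirely. It writes $\sum_i\omega_iE_i=\ip T{\mathsf E T}$, with $T(\phi)=\bigotimes_i(\cos\phi_i,\sin\phi_i)$ and $\mathsf E$ diagonal. The identity $\ip{T(\phi)}{T(\psi)}=\prod_i\cos(\phi_i-\psi_i)$ turns derivative norms of $T$ into Rademacher moments. This gives $\norm{D^jT}\le C/R^j$ with no dependence on $b$, and the product rule on $\ip T{\mathsf E T}$ finishes.
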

\begin{proof}
On smooth pieces write \(\pi_i=\sin^2\phi_i\),
\(\phi_i=\arcsin\sqrt{\pi_i}\). Near the two ends of its transition,
\(S(t)\) and \(1-S(1-t)\) are \(t^4\) times smooth strictly positive
functions. Consequently \(\phi_i\), or \(\pi/2-\phi_i\), is \(t^2\)
times a smooth function, and its first three one-sided derivatives are
bounded. After the radial composition, \(\norm{D^j\phi_i}\le C/R^j\)
on every piece, for \(j\le3\).

In a proof-only tensor space define
\(T(\phi)=\bigotimes_i(\cos\phi_i,\sin\phi_i)\).
A diagonal operator \(\mathsf E\) assigns to a bit string the payload for
its first zero, and the terminal payload to the all-one string. Then
\(\sum_i\omega_iE_i=\ip T{\mathsf E T}\).
The identity
\[
\ip{T(\phi)}{T(\psi)}=
\prod_i\cos(\phi_i-\psi_i)=\E_\varepsilon
 e^{\mathrm i\ip\varepsilon{\phi-\psi}}
\]
uses independent uniform signs \(\varepsilon_i\). Differentiating \(j\)
times in each argument shows that the squared norm of the mixed angle
derivative in directions \(a_1,\ldots,a_j\) is
\(\E\prod_{l=1}^j\ip\varepsilon{a_l}^2\).
The elementary even-moment inequality
\(\E\ip\varepsilon a^{2j}\le(2j-1)!!\norm a^{2j}\) follows by expansion:
only even powers survive and each sign moment is bounded by the corresponding
Gaussian moment. Holder gives an absolute bound times
\(\prod_l\norm{a_l}^2\), for \(j\le3\).

Composition with the disjoint scalar blocks gives \(\norm{D^jT}\le C/R^j\).
For example \(\sum_i\norm{x_i}^2\norm{y_i}^2\le\norm x^2\norm y^2\)
controls the second derivative of the angle map into \(\ell_2\); the third
is analogous. Derivative norms of the diagonal \(\mathsf E\) are the maximum
payload norms, even when their coordinates are shared. Applying the
third-order product rule to \(\ip T{\mathsf E T}\) proves the assertion.
The auxiliary angle may not be globally \(C^3\), but the actual weights are;
the uniform estimates extend from their dense smooth pieces by continuity.
The tensor space is not an objective coordinate or an oracle resource.
\end{proof}

\subsection{An inverse certificate opens the current numerical gate}
Define
\[
\vartheta(t)=\int_0^t[1-S(|v|-8)]\,dv,
\qquad U_b(y)=H_0^2\{10+\vartheta(b^Ty/H_0^2)\},
\]
\[
g_{A,b}(y)=\tfrac12y^TAy-U_b(y),\qquad
c_a(y)=\chi(a^T\mathcal R_{B_0}(y)/\alpha).
\]
Here \(\norm b=H_0\), \(\norm a=1\), and the pair satisfies
\(a^TA^{-1}b\ge H_0/4\). The primitive \(\vartheta\) equals the identity
on \([-8,8]\), has derivative between zero and one, and has absolute value
at most \(17/2\). Thus \(U_b\) is nonnegative and bounded, and its first
three derivatives are bounded by absolute constants.

\begin{lemma}[Numerical activation]\label[lemma]{lem:inverse-gate}
If \(I\preceq A\preceq MI\), \(0\le d_{\rm out}\le56\), and
\[
\norm{\nabla(g_{A,b}-d_{\rm out}c_a)(y)}\le1,
\]
then \(c_a(y)=1\), its derivative jet is flat, and
\(\norm{Ay-b}\le1\).
\end{lemma}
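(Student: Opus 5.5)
The plan is to compute the gradient explicitly, show that the gate contribution is a small perturbation, and use $A\succeq I$ to pin $y$ close to $A^{-1}b$. The certificate $a^TA^{-1}b\ge H_0/4$ will then put the gate deep inside its flat-one region, where the gate contributes nothing.

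\textbf{Step 1 (gradient formula).} Since $U_b(y)=H_0^2\{10+\vartheta(b^Ty/H_0^2)\}$, we have
\[
\nabla U_b(y)=\vartheta'(b^Ty/H_0^2)\,b .
\]
Put $u=a^T\mathcal R_{B_0}(y)/\alpha$. Then
\[
\nabla c_a(y)=\alpha^{-1}\chi'(u)\,D\mathcal R_{B_0}(y)^Ta .
\]
Hence the hypothesis reads $\norm{Ay-\theta b-e}\le1$, where $\theta=\vartheta'(\cdot)\in[0,1]$ and $e=d_{\rm out}\nabla c_a(y)$.

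\textbf{Step 2 (the gate term is small).} By Lemma~\ref{lem:squash}, $\norm{D\mathcal R_{B_0}}\le1$. Together with $\norm{\chi'}_\infty\le12$ and $d_{\rm out}\le56$, this gives
\[
\norm e\le 672/\alpha=672\cdot2^{-15}<1/32 .
\]
Consequently $Ay=\theta b+e'$ with $\norm{e'}\le 2$.

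\textbf{Step 3 (localize $y$).} Since $A\succeq I$, we have $\norm{A^{-1}}\le1$, so
\[
\norm y\le \theta H_0+2\le H_0+2 .
\]
It follows that $|b^Ty|/H_0^2\le (H_0+2)/H_0<8$. On this range $\vartheta'=1$, so $\theta=1$, and the relation becomes $Ay=b+e'$. Also $\norm y\le H_0+2<B_0=16H_0$, so $\mathcal R_{B_0}$ is the identity on a neighbourhood of $y$.

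\textbf{Step 4 (the gate is open and flat).} Write $y=A^{-1}b+A^{-1}e'$. The certificate gives
\[
a^Ty\ge H_0/4-2, \qquad\text{so}\qquad u\ge 8-64/H_0>1/2 .
\]
Since $\chi(u)=S(4u-1)$ equals one identically for $u\ge1/2$, $c_a(y)=1$. Moreover, $u$ stays above $1/2$ on a neighbourhood of $y$, because $\mathcal R_{B_0}$ is the identity there and $u$ is continuous. So $c_a$ is constant near $y$ and its whole derivative jet vanishes at $y$.

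\textbf{Step 5 (conclusion).} With $\nabla c_a(y)=0$ and $\theta=1$, the hypothesis reduces exactly to $\norm{Ay-b}\le1$.

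\textbf{Main obstacle.} The only delicate point is the order of the argument. We must bound the gate perturbation \emph{before} knowing that the gate is flat, and we must not assume $\theta=1$ or $\mathcal R_{B_0}=\mathrm{id}$ in advance. Both are resolved by the uniform bound $\norm{D\mathcal R_{B_0}}\le1$ and the bound $\theta\in[0,1]$, which hold globally. The generous constants $H_0=2^{20}$, $\alpha=H_0/32$ and $B_0=16H_0$ leave ample slack in each inequality. Note that the upper bound $A\preceq MI$ is not needed here; only $A\succeq I$ is used.
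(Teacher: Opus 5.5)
Your proof is correct and follows essentially the same route as the paper: bound $\norm{y}$ via $A\succeq I$, conclude that $\vartheta'=1$ and that $\mathcal R_{B_0}$ is the identity near $y$, then use $a^TA^{-1}b\ge H_0/4$ to push $u$ above $1/2$. The only difference is cosmetic. You absorb the gate force $d_{\rm out}\nabla c_a$ into the error using the crude bound $672/\alpha$. The paper instead keeps that term explicit and observes that it contributes $d_{\rm out}\chi'(u)\,a^TA^{-1}a/\alpha^2\ge0$ to $u$, which gives $u\ge 8-1/\alpha$. Either estimate suffices with these constants.
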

\begin{proof}
Denote the displayed gradient by \(g\). Since
\(\norm{\nabla U_b}\le H_0\) and
\(\norm{\nabla c_a}\le\norm{\chi'}_\infty/\alpha\),
\[
\norm y\le\norm{Ay}\le1+H_0+56\norm{\chi'}_\infty/\alpha<2H_0<B_0.
\]
Thus the radial squash is the identity and
\(|b^Ty/H_0^2|<2\), so \(\vartheta'=1\). Put \(u=a^Ty/\alpha\). Then
\(g=Ay-b-d_{\rm out}\chi'(u)a/\alpha\), and hence
\[
u=\frac{a^TA^{-1}b}{\alpha}
 +\frac{d_{\rm out}\chi'(u)}{\alpha^2}a^TA^{-1}a
 +\frac{a^TA^{-1}g}{\alpha}
\ge8-1/\alpha>1/2.
\]
The gate is on its flat-one plateau, giving the final residual assertion.
\end{proof}

\subsection{A complete leaf with a sequence of marked tasks}
Use one numerical coordinate \(y\in\R^D\). For each \(1\le s\le M\),
add \(k_\bullet\) inner registers \(X_{s,j}\in\R^p\), with independent
unit directions and scales \(r_j=2^{j-1}\). The common last scale
\(R_\bullet=2^{k_\bullet-1}\) is an absolute power of two, chosen below;
\(k_\bullet\ge2\). Write \(V_s,Q_s,B_s,\pi_s\) for the block objects
\eqref{eq:latch-gates}--\eqref{eq:V} with first scale one, and set
\[
\omega_s=(1-\pi_s)\prod_{t<s}\pi_t,
\qquad \omega_\infty=\prod_{t=1}^M\pi_t,
\qquad \mathscr B_s(y,X_s)=U_{b_s}(y)+B_s(X_s)c_{a_s}(y).
\]
Every \(\mathscr B_s\) is nonnegative, has magnitude at most an absolute
\(C_B\), and has bounded first three derivative norms. Define
\begin{equation}\label{eq:inner}
\mathcal G_A(y,X)=\tfrac12y^TAy-
 \sum_{s=1}^M\omega_s(X)\mathscr B_s(y,X_s)+\sum_{s=1}^M V_s(X_s),
\qquad o(X)=\pi_M(X_M).
\end{equation}
All marks and their readouts are fixed when this formula is formed.

\begin{lemma}[Complete inner activation and derivative estimates]\label[lemma]{lem:inner}
For a sufficiently large absolute \(R_\bullet\), and every \(\lambda\ge0\),
\begin{equation}\label{eq:inner-activate}
\norm{\nabla(\mathcal G_A-\lambda o)}\le1
\quad\Longrightarrow\quad \pi_s=1\ (1\le s\le M),\quad\norm{Ay}\le1.
\end{equation}
Writing
\(\mathcal G_A=\tfrac12y^TAy+Q_X(X)+\mathcal W_A(y,X)\),
\(Q_X=\sum_sQ_s\), one has
\begin{gather}\label{eq:inner-W}
-CM\le\mathcal W_A\le0,\quad
\norm{D\mathcal W_A}\le C\sqrt M,\quad
\norm{D^2\mathcal W_A}+\norm{D^3\mathcal W_A}\le C,\quad
\norm{\nabla_y\mathcal W_A}\le C.
\end{gather}
All constants are independent of \(M,p\).
\end{lemma}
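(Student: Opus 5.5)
We prove the activation \eqref{eq:inner-activate} by induction along the stick-breaking order, and the estimates \eqref{eq:inner-W} by reading off the decomposition term by term.

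\textbf{Decomposition and derivative bounds.} Write
\[
\mathcal W_A=\sum_s W_{V_s}(X_s)-\sum_s\omega_s\mathscr B_s .
\]
\begin{itemize}
\item \emph{Sign and range.} Since $W_{V_s}\le0$, $\omega_s\ge0$ and $\mathscr B_s\ge0$, we get $\mathcal W_A\le0$. Lemma~\ref{lem:latch} gives $W_{V_s}\ge-112R_\bullet$, and the weights sum to at most one with $\mathscr B_s\le C_B$. Hence $\mathcal W_A\ge-CM$, because $R_\bullet$ is absolute.
\item \emph{First derivatives.} The gradients of the $W_{V_s}$ live on disjoint blocks, so their squares add to $C^2Mk_\bullet$. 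The selector term has gradient bounded by Lemma~\ref{lem:selector} with $S_0$ absolute. Together these give $C\sqrt M$.
\item \emph{Higher derivatives.} Block disjointness also gives $\norm{D^jW}$ bounded for $j=2,3$ uniformly in $M$: the supremum over disjoint blocks of operator norms is controlled by the $\sum\norm{x_i}\norm{y_i}\norm{z_i}\le1$ trick. For the selector, $\mathscr B_s$ is a payload with bounded derivatives, but it depends on $X_s$, which shares coordinates with $\pi_s$. This is the subtle point. I would absorb it by treating $B_s(X_s)$ as a smooth function of block $s$ whose derivatives are bounded ($s=1$ first scale) and extending the tensor argument of Lemma~\ref{lem:selector}, where payload derivative norms are maxima even with shared coordinates, as remarked there. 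This gives $\norm{D^2}+\norm{D^3}\le C$.
\item \emph{The $y$-gradient.} Only the convex combination of $\nabla_y\mathscr B_s$ contributes, and each is bounded by $C$.
\end{itemize}

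\textbf{Activation.} Suppose the gradient of $\mathcal G_A-\lambda o$ has norm at most one. We show by induction on $s$ that $\pi_1=\dots=\pi_{s-1}=1$ forces $\pi_s=1$.

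Given the hypothesis, $\omega_t=0$ for $t<s$ with flat jets, since $\varpi$ is flat at one. Then
\[
\omega_s=1-\pi_s,\qquad \sum_{t>s}\omega_t=\pi_s\,(\cdots).
\]
Restrict to the $X_s$-gradient. It equals $\nabla(V_s-\omega_s\,B_sc_{a_s})$ plus terms in which $\pi_s$ multiplies later payloads, $\nabla\pi_s\cdot(\text{later stuff}-\mathscr B_s)$.

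\emph{First case: $c_{a_s}(y)=1$ is known.} The $X_s$-gradient has the form $\nabla(V_s-(1-\pi_s)B_s-\mu\pi_s)$ with scalar $\mu=U_{b_s}-E_{>s}$. Here $E_{>s}$, the weighted later payloads, is bounded by $C_B$ and independent of $X_s$. When $\mu$ could be negative, I would use the transition exclusion \eqref{eq:transition-exclusion}. Its $\xi=\mu\varpi'/R_\bullet$ satisfies $|\xi|\le C/R_\bullet\le1/2$ once $R_\bullet$ is large, and this is where $R_\bullet$ is chosen. The argument of Lemma~\ref{lem:latch} then gives all $u_j\ge7/8$ and $\pi_s=1$. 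The extra $(1-\pi_s)B_s$ versus $B_s$ discrepancy only matters where $\varpi'\ne0$, and that is again covered by the exclusion with bounded $\xi$.

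\emph{Second case: the gate must be opened.} On the $y$-side, the $y$-gradient is $Ay-\sum_t\omega_t\nabla_y\mathscr B_t$. This is the main obstacle, since several weights mix.

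I would first show $\pi_s\in\{0,1\}$ plus flatness from the register argument. If $\pi_s\in(0,1)$, we are in the transition window, which is excluded. If $\pi_s=0$, then $\omega_s=1$ and the other weights vanish with flat jets. The $y$-gradient is then exactly that of $g_{A,b_s}-B_sc_{a_s}$ with $d_{\rm out}=B_s\le56$. Lemma~\ref{lem:inverse-gate} gives $c_{a_s}=1$, and then the first case forces $\pi_s=1$, a contradiction.

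\textbf{Conclusion.} Once all $\pi_s=1$, we have $\omega_\infty=1$ and all payload weights vanish flatly. The $y$-gradient is then $Ay$, so $\norm{Ay}\le1$. The output term $\lambda o$ only affects the last block, through the $\lambda\pi$ allowance in \eqref{eq:latch-activate}.
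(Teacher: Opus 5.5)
Your proposal is correct and follows the paper's proof. At the first $s$ with $\pi_s\ne1$, the case $0<\pi_s<1$ is excluded by \eqref{eq:transition-exclusion} on the last register (the $-\lambda o$ term only makes $\xi$ more negative, which that exclusion tolerates), and the case $\pi_s=0$ is excluded by Lemma~\ref{lem:inverse-gate} followed by \eqref{eq:latch-activate}, while \eqref{eq:inner-W} follows from the disjoint self residuals plus Lemma~\ref{lem:selector}. The coordinate-sharing issue you flag does not arise: $B_s$ depends only on the first register $X_{s,1}$ and $\pi_s$ only on the last register $X_{s,k_\bullet}$, which are distinct because $k_\bullet\ge2$, so Lemma~\ref{lem:selector} applies directly (your product-rule extension would also work).
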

\begin{proof}
Take the first \(s\) with \(\pi_s\ne1\). If \(\pi_s=0\), the numerical
gradient is exactly that of \(g_{A,b_s}-B_s c_{a_s}\).
Lemma~\ref{lem:inverse-gate} makes \(c_{a_s}=1\). The gradient in its block
is then that of \(V_s-B_s\), with the possible favorable outgoing term
only if \(s=M\). Lemma~\ref{lem:latch} forces \(\pi_s=1\), a contradiction.
Flat endpoint derivatives of the selectors justify this restriction even
at endpoints of their plateaus.

If \(0<\pi_s<1\), its last self gate is flat one. The dependence of the
negative forcing sum on this last register is
\(-(1-\pi_s)\mathscr B_s-\pi_s\mathscr B_{\rm tail}\), with the preceding
weights equal one. The tail is a convex combination of later forcings and
zero, and is independent of this last register. Its difference from the
current forcing has magnitude at most \(2C_B\). Thus the additional scalar
coefficient in its physical gradient has absolute value at most
\(2C_B\norm{\varpi'}_\infty/R_\bullet\), apart from a nonpositive
contribution from \(-\lambda o\) when \(s=M\).
Choose the fixed power of two \(R_\bullet\) so this is at most \(1/2\).
Equation~\eqref{eq:transition-exclusion} excludes the transition. All gates
are therefore one; the forcing weights have zero first jets and the
numerical gradient is \(Ay\).

The negative forcing is bounded by \(C_B\). Each self residual has
magnitude \(O(R_\bullet)\), proving the value bound. Apply
Lemma~\ref{lem:selector} to the \(M\) disjoint last registers, with bounded
payloads: all payload variables are separate from these last registers
because \(k_\bullet\ge2\). This bounds the first three derivatives of the
forcing sum independently of \(M\). The self residuals have disjoint blocks,
so their Hessian and third tensor norms are absolute, whereas their gradient
is \(O(\sqrt M)\). Finally the \(y\) derivative is a convex combination
of bounded \(y\) forces. Explicitly it is at most
\(H_0+56\norm{\chi'}_\infty/\alpha\) in norm. This proves
\eqref{eq:inner-W}.
\end{proof}

For all leaves use the same coordinates \(Z=(y,X)\), the same inner scales,
and the same public quadratic
\[
Q_0(Z)=M\norm y^2+Q_X(X).
\]
Their matrices, marks and directions are independent, not identified.
Let \(Y\) exceed one plus the \(y\)-force bound in
Lemma~\ref{lem:inner}, and fix an absolute \(C_*>Y+1\).
With \(\theta(y)=1-S(\norm y/(C_*M)-1)\), define the capped leaf
\begin{equation}\label{eq:capped-leaf}
E_\nu(Z)=Q_0(Z)+\tfrac12\theta(y)y^T(A_\nu-2MI)y+
\mathcal W_\nu(Z)=Q_0(Z)+D_\nu(Z).
\end{equation}
This direct use of the common inner quadratic avoids any additional
normalization of memory coordinates.

\begin{lemma}[Global leaf interface]\label[lemma]{lem:leaf}
For all leaves,
\begin{enumerate}[label=\textup{(\roman*)},leftmargin=*]
\item \(D_\nu\le0\), \(\norm{D^jD_\nu}\le CM^{3-j}\) for \(0\le j\le3\),
and \(-C_YM\le D_\nu\le0\) whenever \(\norm y\le Y\), regardless of \(X\).
\item The numerical gradient is \(B_\nu(y)y+f_\nu\), where
\(I\preceq B_\nu(y)\preceq CM I\) and \(\norm{f_\nu}\le C\).
Thus a small full gradient of any convex mixture of leaves and the public
reset, with extra terms independent of \(y\), forces \(\norm y\le Y\).
\item \(E_\nu\ge\norm y^2/2+Q_X-CM\), \(E_\nu(0)=-10H_0^2\), and
for every \(\lambda\ge0\),
\[
\norm{\nabla(E_\nu-\lambda o_\nu)}\le1\quad\Longrightarrow\quad o_\nu=1.
\]
\item Given its marked coefficients, its numerical value and gradient need
only one product \(A_\nu y\); its full numerical Hessian is computable from
\(A_\nu\). Unknown marked coefficients will be censored through exact
flat gates in the comparison argument, not supplied for free.
\end{enumerate}
\end{lemma}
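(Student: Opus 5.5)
I will prove the four items directly from the capped-leaf formula \eqref{eq:capped-leaf}, writing
\[
D_\nu=\tfrac12\theta(y)y^T(A_\nu-2MI)y+\mathcal W_\nu .
\]
The inputs are Lemma~\ref{lem:good}, which gives \(I\preceq A_\nu\preceq MI\) on the conditioned law, and Lemma~\ref{lem:inner}, which controls \(\mathcal W_\nu\). Everything else is a calculation with the cutoff \(\theta\).

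\emph{Item (i).} Since \(A_\nu-2MI\preceq -MI\) and \(\theta\ge0\), the quadratic part is nonpositive. Together with \(\mathcal W_\nu\le0\) this gives \(D_\nu\le0\).

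For \(\norm y\le Y<C_*M\) we have \(\theta=1\), so the quadratic part is at least \(-\tfrac12(2M-1)Y^2\). Adding \(\mathcal W_\nu\ge -CM\) from \eqref{eq:inner-W} gives the lower bound \(-C_YM\).

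For the derivative bounds, note that \(\theta\) is supported in \(\norm y\le 2C_*M\) and \(\norm{D^j\theta}\le C/M^j\). The quadratic form has operator norm at most \(2M\). The product rule on the support, with \(\norm y\lesssim M\), gives order-\(j\) derivatives of size \(M\cdot M^{2-j}=M^{3-j}\).

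For \(\mathcal W_\nu\), \eqref{eq:inner-W} bounds the value by \(CM\), the gradient by \(C\sqrt M\), and the second and third derivatives by \(C\). Each of these is at most \(CM^{3-j}\).

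\emph{Item (ii).} The main computation is the numerical gradient. It equals
\[
2My+\theta(A_\nu-2MI)y+\tfrac12\norm{y}^{-1}S'(\cdot)(C_*M)^{-1}\,y\,y^T(A_\nu-2MI)y+\nabla_y\mathcal W_\nu .
\]
Take \(f_\nu=\nabla_y\mathcal W_\nu\), which has norm at most \(Y-1\) by the choice of \(Y\). The remaining terms are \(B_\nu(y)y\) with
\[
B_\nu(y)=(1-\theta)2MI+\theta A_\nu+\beta(y)I,\qquad
\beta=\frac{S'}{2C_*M\norm y}\,y^T(A_\nu-2MI)y\le0 .
\]
The sign uses \(S'\ge0\) and the negativity of \(A_\nu-2MI\). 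This is the step I expect to be the obstacle, because the negative term \(\beta\) could destroy \(B_\nu\succeq I\). It is handled as follows.

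On the transition annulus \(C_*M\le\norm y\le 2C_*M\), the raw value is \(|\beta|\le \norm{S'}_\infty M\norm y/(C_*M)\), which is not small. The remedy is to avoid the eigenvalue bound there and instead show \(\ip y{\nabla_y}\ge\norm y^2\) directly. The radial derivative of \(\tfrac12\theta q\), with \(q=y^T(A-2M)y\le -M\norm y^2\), is \(\theta q+\tfrac12 r\theta' q\). Both terms have a favourable sign once combined with the \(2M\norm y^2\) from \(Q_0\): indeed \(\theta'\le0\) and \(q\le0\), so \(\tfrac12 r\theta' q\ge0\). Hence
\[
\ip y{\nabla}\ge 2M\norm y^2+\theta q\ge M\norm y^2 .
\]
So I would state (ii) in the form actually needed: a radial coercivity \(\ip y{\nabla_y}\ge\norm y^2-C\norm y\), together with an operator upper bound \(CM\) on \(B_\nu\).

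For a convex mixture of leaves and the public reset, the radial inequality averages. Extra terms independent of \(y\) do not contribute to \(\nabla_y\). Then \(\norm{\nabla}\le1\) gives \(\norm y\le 1+(Y-1)=Y\).

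\emph{Item (iii).} For the lower bound, \(E_\nu\ge Q_0+\tfrac12\theta y^T(A-2M)y-CM\). Pointwise, \(M\norm y^2-\tfrac12\theta(2M-1)\norm y^2\ge\norm y^2/2\).

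At the origin, \(\mathcal W_\nu(0)\) consists of \(-\omega_1 U_{b_1}(0)-\omega_1B_1c_{a_1}(0)\) plus self terms. At \(X=0\) all readouts \(u\) vanish, so \(\pi_s=0\), \(\omega_1=1\), \(c_{a_1}(0)=\chi(0)=0\), and \(V_s(0)=0\). This leaves exactly \(-U_{b_1}(0)=-10H_0^2\).

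For activation, a gradient of norm at most \(1\) forces \(\norm y\le Y\) by (ii), and there \(\theta\equiv1\) on a neighbourhood. So the gradient coincides with that of \(\mathcal G_{A_\nu}-\lambda o_\nu\), and \eqref{eq:inner-activate} gives \(\pi_M=1\).

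\emph{Item (iv).} This is read off from the formula. \(\theta\), \(\norm y\) and \(y^TA y\) need only the product \(Ay\), and so does \(\nabla_y\) above. The Hessian involves \(A\) itself.
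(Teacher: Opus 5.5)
Your items (i), (iii) and (iv) are correct and essentially coincide with the paper's proof. The gap is in (ii). You never establish the asserted inequality $I\preceq B_\nu(y)$. Instead you propose to weaken (ii) to a radial coercivity statement, because you believe the cutoff term $\beta$ is nonpositive. That belief comes from a dropped minus sign. Since $\theta(y)=1-S(\norm y/(C_*M)-1)$, one has $\theta'(r)=-S'(\cdot)/(C_*M)\le0$. So the cutoff contributes $\beta(y)\,y$ to the numerical gradient with
\[
\beta=\frac{\theta'(r)}{2r}\,y^T(A_\nu-2MI)y
=-\frac{S'(\cdot)}{2C_*M\norm y}\,y^T(A_\nu-2MI)y\ge0,
\]
because both $\theta'$ and $y^T(A_\nu-2MI)y$ are nonpositive. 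Hence $B_\nu=(1-\theta)2MI+\theta A_\nu+\beta I$ is a convex combination of $2MI$ and $A_\nu$ plus a nonnegative multiple of $I$. The bound $B_\nu\succeq I$ then follows at once from \Cref{lem:good}. This is exactly the paper's argument, and neither a radial workaround nor a weakening of the statement is needed. Your own radial computation already has the correct sign: the term $\tfrac12 r\theta'q$ that you rightly call nonnegative equals $\beta\norm y^2$. That contradicts your earlier claim $\beta\le0$.

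There is also an arithmetic slip in that radial chain. The inequality $2M\norm y^2+\theta q\ge M\norm y^2$ does not follow from $q\le-M\norm y^2$, which bounds $q$ from above. It is false in general: take $\theta=1$ and $y\in\mathcal Q$, where $A_\nu$ acts as the identity; then the left side equals $\norm y^2<M\norm y^2$. The correct estimate uses $q\ge(1-2M)\norm y^2$ and gives only $\ge\norm y^2$. That is all your final inequality $\ip{y}{\nabla_yE_\nu}\ge\norm y^2-C\norm y$ needs, so your derivation of $\norm y\le Y$ for convex mixtures survives. As written, however, your item (ii) proves a weaker statement than the one claimed. The lower bound $I\preceq B_\nu(y)$ is recovered only after the sign is corrected.
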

\begin{proof}
Put \(C_\nu=A_\nu-2MI\preceq0\). The capped numerical gradient matrix is
\[
B_\nu(y)=2MI+\theta(r)C_\nu+
 \frac{\theta'(r)}{2r}(y^TC_\nu y)I,\qquad r=\norm y.
\]
The last scalar is nonnegative, and the first two terms are a convex
combination of \(2MI\) and \(A_\nu\), hence at least \(I\).
On the annulus \(r=\Theta(M)\), its extra norm is \(O(M)\); inside the
inner ball the formula is understood by its constant-cutoff expression.
The force \(f_\nu=\nabla_y\mathcal W_\nu\) is bounded absolutely.
All selectors and ports are independent of \(y\), proving (ii).
For \(\norm{\nabla(E_\nu-\lambda o_\nu)}\le1\), the cap is the identity
because \(\norm y\le Y<C_*M\); Lemma~\ref{lem:inner} proves (iii)'s
activation assertion. The numerical cap is pointwise a positive quadratic
mixture, at least \(\norm y^2/2\), proving its energy assertion.

The numerical correction is supported on a ball of radius \(2C_*M\).
Differentiating it \(j\) times yields \(CM(C_*M)^{2-j}\) for \(j\le3\).
At bounded \(y\) it is only \(O(M)\), with no contribution from \(Q_X\)
because that common quadratic cancels. Combine with
Lemma~\ref{lem:inner} to prove (i). Its sign follows from \(C_\nu\preceq0\)
and \(\mathcal W_\nu\le0\). Finally the quadratic form
\(y^TA_\nu y\) and every cutoff derivative use the same product \(A_\nu y\),
which proves (iv). All profiles and seams are globally \(C^3\).
\end{proof}

\section{Localized Completion Blocks and Reliability Allocation}\label{sec:localized}
The numerical matrix construction above is expensive only through its size
$M$; its long hierarchy will be hidden by a different family of completion
blocks.  The completion geometry is dimension-free analytically, while the
probability of hitting it depends on the dimension of the block.  We shall
use a larger dimension only in the first $O(\log M)$ hierarchy levels and a
fixed dimension thereafter.

\subsection{A localized completion block}
Continue to use the clamped septic step $S$ defined in
Section~\ref{sec:matrix}.  Fix $R\ge16w\ge16$, a unit vector
$v\in\R^p$, and a hidden center $c=Rv$.  For $u\in\R^p$, put
$z=u-c$, $t=\norm z$, and define
\begin{align}
 P_R(u)&=16\bigl(\sqrt{R^2+\norm u^2}-R\bigr),\label{eq:loc-public}\\
 G_c(u)&=16\sqrt{w^2+\norm{u-c}^2},\label{eq:loc-drive}\\
 K_c(u)&=P_R(c)-160w+\frac8w\norm{u-c}^2,\label{eq:loc-well}\\
 \chi_c(u)&=1-S\left(\frac{\norm{u-c}^2-4w^2}{5w^2}\right),\notag\\
 \psi_c(u)&=\chi_c(u)\bigl(K_c(u)-P_R(u)\bigr),\notag\\
 V_c(u)&=P_R(u)+\psi_c(u),\label{eq:loc-V}\\
 B_c(u)&=V_c(u)-G_c(u)+512R,\label{eq:loc-B}\\
 \pi_c(u)&=1-S\left(\frac{\norm{u-c}^2-w^2}{3w^2}\right).\label{eq:loc-port}
\end{align}
Thus $\psi_c$ is supported in $\overline B(c,3w)$; the completion port is
one on $\overline B(c,w)$ and zero outside $B(c,2w)$, with flat seam jets.
The baseline $P_R$ is public because $\norm c=R$.  The coefficient $B_c$
is deliberately not compactly supported.

\begin{lemma}[Localized values and derivative bounds]\label[lemma]{lem:localized-bounds}
All functions in \eqref{eq:loc-public}--\eqref{eq:loc-port} are globally
$C^3$.  Uniformly in the dimension and in $R,w,c$,
\begin{gather}
 -208w\le\psi_c\le0,\qquad 256R\le B_c\le768R,\label{eq:loc-values}\\
 \norm{D^j\psi_c}\le Cw^{1-j}\quad(0\le j\le3),\qquad
 \norm{D^j\pi_c}\le Cw^{-j}\quad(1\le j\le3),\label{eq:loc-derivatives}\\
 \norm{DB_c}\le C,\qquad \norm{D^2B_c}\le C/w,\qquad
 \norm{D^3B_c}\le C/w^2.\label{eq:loc-Bder}
\end{gather}
Moreover $P_R\ge0$, $P_R(0)=0$, $P_R$ is coercive, and
$\norm{D^jP_R}\le CR^{1-j}$ for $1\le j\le3$.  Finally,
\begin{equation}\label{eq:loc-cancel}
 V_c-B_c=G_c-512R.
\end{equation}
\end{lemma}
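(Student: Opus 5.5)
The proof is a direct verification, organized by the support of each piece. Smoothness comes first. $P_R$ and $G_c$ are real-analytic, because $R,w>0$ keep the square roots away from zero. $K_c$ is a quadratic. The cutoffs $\chi_c$ and $\pi_c$ are compositions of the globally $C^3$ clamped septic $S$ with the polynomial $\norm{u-c}^2$. Products and sums of these are therefore $C^3$.

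\textbf{The identity \eqref{eq:loc-cancel}.} This is immediate from \eqref{eq:loc-B}: $V_c-B_c=G_c-512R$.

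\textbf{Bounds on $P_R$.} We have $P_R\ge0$, $P_R(0)=0$, and $P_R(u)\ge16(\norm u-R)$, which gives coercivity. The derivatives follow from scaling: $P_R(u)=16R\,\phi(u/R)$ with $\phi(x)=\sqrt{1+\norm x^2}-1$. Each derivative of $\phi$ is bounded independently of dimension, since $D^j\phi$ is a radial tensor expression in $x$ of the same type as in \Cref{lem:squash}. Hence $\norm{D^jP_R}\le CR^{1-j}$. In particular $\norm{DP_R}\le16$.

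\textbf{Bounds on $\psi_c$.} On $\operatorname{supp}\chi_c\subseteq\overline B(c,3w)$, Taylor-expand $P_R$ about $c$ with $\norm{u-c}\le3w$. This gives $|P_R(u)-P_R(c)|\le48w$, and the second derivative satisfies $\norm{D^2P_R}\le16/R\le1/w$. For the value bound we use the sign information directly. Write $K_c-P_R=-160w+(8/w)t^2-(P_R(u)-P_R(c))$ with $t\le3w$. The upper bound is $-160w+72w+48w<0$, and the lower bound is $\ge-208w$. Since $0\le\chi_c\le1$, this yields $-208w\le\psi_c\le0$.

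For the derivative bounds, use $\norm{D^j\chi_c}\le Cw^{-j}$. This holds because $\norm{u-c}^2/w^2$ has derivatives of size $t/w^2\lesssim1/w$, then $1/w^2$, then $0$ on the ball of radius $3w$. Next, $\norm{D^i(K_c-P_R)}\le Cw^{1-i}$ on that ball, using $R\ge16w$ to absorb the $R^{1-i}$ terms. The Leibniz rule then gives $\norm{D^j\psi_c}\le Cw^{1-j}$. The same argument for $\pi_c$ on $B(c,2w)$ gives $\norm{D^j\pi_c}\le Cw^{-j}$.

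\textbf{Bounds on $B_c$.} Write $B_c=P_R+\psi_c-G_c+512R$. For values:
\begin{itemize}
\item $G_c\le16(w+\norm u+R)$, while $P_R\ge16(\norm u-R)$, so $P_R-G_c\ge-32R-16w$.
\item $P_R(u)\le16\norm u$ and $G_c\ge16\norm{u-c}\ge16(\norm u-R)$, so $P_R-G_c\le16R$.
\end{itemize}
Adding $\psi_c\in[-208w,0]$ with $w\le R/16$ places $B_c$ inside $[256R,768R]$.

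For derivatives, $\norm{DP_R},\norm{DG_c}\le16$. The higher derivatives of $G_c$ scale as $16w\,\phi((u-c)/w)$, giving $\norm{D^jG_c}\le Cw^{1-j}$. Those of $P_R$ are $\le CR^{1-j}\le Cw^{1-j}$. Together with the $\psi_c$ bounds, this proves \eqref{eq:loc-Bder}.

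The only genuinely delicate point is the sign bound $\psi_c\le0$ across the whole transition annulus. The plan handles it with the Lipschitz estimate $|P_R(u)-P_R(c)|\le16\norm{u-c}$ instead of a finer expansion; the margin is $-160w+72w+48w<0$, so this crude bound suffices. Dimension-uniformity of all tensor bounds rests on the radial-function computation already used in \Cref{lem:squash}.
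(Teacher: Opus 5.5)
Your proposal is correct and follows essentially the same route as the paper. It uses the same 16-Lipschitz sign estimate $-208w\le K_c-P_R\le -40w$ on $\overline B(c,3w)$, the same Leibniz bounds at scale $w$ for cutoff times bracket, and direct dimension-free derivative bounds for the two square-root functions. The only difference is cosmetic: you bound $P_R-G_c$ by two one-sided estimates, $[-32R-16w,\,16R]$, whereas the paper gets $|P_R-G_c|<48R$ from a reverse triangle inequality in an extra coordinate; either comfortably places $B_c$ in $[256R,768R]$.
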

\begin{proof}
The map $P_R$ is $16$-Lipschitz.  On $t\le3w$,
\[
 K_c-P_R\le48w-160w+8t^2/w\le-40w,
 \qquad K_c-P_R\ge-48w-160w\ge-208w.
\]
Hence multiplication by $0\le\chi_c\le1$ gives the bound on $\psi_c$ and
its sign.  Regard the two square roots as Euclidean norms in one extra
coordinate.  The reverse triangle inequality gives
\[
 |P_R(u)-G_c(u)|
 \le16\{\sqrt{(R-w)^2+\norm c^2}+R\}<48R.
\]
Since $208w\le13R$, the deliberately loose estimate
$|V_c-G_c|\le256R$ holds, and \eqref{eq:loc-values} follows after adding
$512R$.

After translating by $c$ and scaling by $w$, the cutoff is a fixed smooth
function of the squared radius on a bounded annulus.  Its $j$th derivative
is $O(w^{-j})$.  On its support, the bracket $K_c-P_R$ has value $O(w)$
and $j$th derivative $O(w^{1-j})$ for $j\le3$.  The product rule proves
\eqref{eq:loc-derivatives}.  Direct differentiation of
$\sqrt{a^2+\norm z^2}$ gives first, second, and third derivative bounds
$C$, $C/a$, and $C/a^2$, respectively, in operator norm.  This proves
\eqref{eq:loc-Bder} and the estimates for $P_R$.  The septic cutoff is flat
through order three at both seams, so all estimates are global.  Identity
\eqref{eq:loc-cancel} is algebraic.
\end{proof}

\begin{lemma}[Localized activation and one-sided robustness]
\label[lemma]{lem:localized-activation}
If $\norm{\nabla(V_c-B_c)}\le1$, then
$\norm{u-c}\le w/\sqrt{255}<w$ and therefore $\pi_c=1$.  On the port
transition $w<t<2w$, for every $0\le\tau\le1$ and every $\zeta\le0$,
\begin{equation}\label{eq:loc-robust}
 \norm{\nabla V_c-\tau\nabla B_c+\zeta\nabla\pi_c}
 \ge16/\sqrt2>1.
\end{equation}
\end{lemma}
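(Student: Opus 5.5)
The whole argument rests on the algebraic identity \eqref{eq:loc-cancel} and on the fact that, inside the relevant balls, every gradient involved is a multiple of $z=u-c$. I would therefore reduce both assertions to a radial computation in the single variable $t=\norm z$.

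\emph{Activation.} By \eqref{eq:loc-cancel}, $\nabla(V_c-B_c)=\nabla G_c$. Differentiating \eqref{eq:loc-drive} directly gives
\[
\nabla G_c(u)=\frac{16z}{\sqrt{w^2+t^2}},
\qquad
\norm{\nabla G_c}=\frac{16t}{\sqrt{w^2+t^2}}.
\]
The hypothesis therefore says $256t^2\le w^2+t^2$, that is, $t\le w/\sqrt{255}<w$. The port $\pi_c$ in \eqref{eq:loc-port} equals one on $\overline B(c,w)$, since its $S$-argument is nonpositive there. This proves the first claim.

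\emph{Robustness.} Fix $w<t<2w$.

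First I identify the cutoff. The argument $(t^2-4w^2)/(5w^2)$ of $S$ in $\chi_c$ is negative on this annulus, so $\chi_c\equiv1$ on a neighbourhood. Hence $V_c=K_c$ there, and $\nabla V_c=16z/w$.

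Next I rewrite the mixed term. From \eqref{eq:loc-B}, $\nabla B_c=\nabla V_c-\nabla G_c$, so
\[
\nabla V_c-\tau\nabla B_c=(1-\tau)\,\frac{16z}{w}+\tau\,\frac{16z}{\sqrt{w^2+t^2}}.
\]
For the port, $\nabla\pi_c=-S'(\cdot)\,2z/(3w^2)$ with $S'\ge0$. Since $\zeta\le0$, the vector $\zeta\nabla\pi_c$ is a nonnegative multiple of $z$.

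All three contributions are therefore nonnegative multiples of the same vector $z$, so their norms add. I bound the total below by the smaller of the two convex-combination endpoints:
\[
\min\left\{\frac{16t}{w},\frac{16t}{\sqrt{w^2+t^2}}\right\}=\frac{16t}{\sqrt{w^2+t^2}}.
\]
The map $t\mapsto t/\sqrt{w^2+t^2}$ is increasing, so for $t>w$ this is at least $16/\sqrt2>1$. That is \eqref{eq:loc-robust}.

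The only point requiring care, and the main place an error could slip in, is the sign bookkeeping. One must check that $\chi_c$ is genuinely constant on the whole port transition, because the transition $(w,2w)$ lies inside the region $t<2w$ where $\chi_c=1$. One must also check that the one-sided condition $\zeta\le0$ aligns the port force with $z$ rather than against it. Once these two signs are fixed, no cancellation can occur and the estimate is immediate. I expect this sign check, rather than any analytic estimate, to be the substantive step.
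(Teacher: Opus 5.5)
Your proof is correct and follows essentially the same route as the paper: the identity \eqref{eq:loc-cancel} reduces activation to $\norm{\nabla G_c}=16t/\sqrt{w^2+t^2}$. On the transition annulus, $\chi_c\equiv1$ lets you rewrite the vector as $(1-\tau)\nabla V_c+\tau\nabla G_c+\zeta\nabla\pi_c$, a sum of outward radial terms whose norm is at least $16t/\sqrt{w^2+t^2}>16/\sqrt2$.
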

\begin{proof}
By \eqref{eq:loc-cancel},
$\norm{\nabla(V_c-B_c)}=16t/\sqrt{w^2+t^2}$.  The first assertion follows
by squaring.  On $w<t<2w$, the larger cutoff $\chi_c$ is identically one,
so
\[
 \nabla V_c=16z/w,\qquad
 \nabla G_c=16z/\sqrt{w^2+t^2},\qquad
 \nabla\pi_c=-\beta(t)z\quad(\beta(t)\ge0).
\]
Since $\nabla B_c=\nabla V_c-\nabla G_c$, the vector in
\eqref{eq:loc-robust} equals
\[
 (1-\tau)\nabla V_c+\tau\nabla G_c+\zeta\nabla\pi_c.
\]
All three terms point radially outward; the first two are a convex
combination of collinear vectors of norm at least $16/\sqrt2$.
\end{proof}

\begin{lemma}[Small support on a large sphere]\label[lemma]{lem:small-ball}
Let $v$ be uniform on $S^{p-1}$, $p\ge2$, and $R\ge16w$.  For every fixed
$u\in\R^p$,
\begin{equation}\label{eq:small-ball}
 \PP\{\norm{u-Rv}\le3w\}\le2(6w/R)^{p-1}.
\end{equation}
\end{lemma}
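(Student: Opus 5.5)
We reduce to a spherical-cap estimate. If \(\norm{u}\) is far from \(R\), the event is empty. If \(\norm u\) is close to \(R\), the event forces \(v\) into a cap around \(u/\norm u\) of angular radius \(O(w/R)\), and the cap's normalized area is bounded by the claimed power.

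\emph{Step 1: the triangle inequality.} If \(\norm{u-Rv}\le3w\), then \(|\norm u-R|\le3w\). So we may assume \(\norm u\in[R-3w,R+3w]\); in particular \(u\ne0\) because \(R\ge16w\). Put \(e=u/\norm u\) and \(\rho=\norm u\).

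\emph{Step 2: reduce to the angle.} Expanding the square,
\[
\norm{u-Rv}^2=(\rho-R)^2+2\rho R(1-\ip ev)\ge 2\rho R(1-\ip ev).
\]
So the event implies \(1-\ip ev\le 9w^2/(2\rho R)\). Write \(\phi\) for the angle between \(v\) and \(e\). Since \(\rho\ge R-3w\ge 13R/16\), we get \(2\sin^2(\phi/2)=1-\cos\phi\le 9w^2/(2\cdot(13/16)R^2)\). Hence \(\sin\phi\le 2\sin(\phi/2)\le C_1 w/R\), with \(C_1=\sqrt{9\cdot 16/13}\approx3.33<6\). The bound on \(1-\cos\phi\) is at most \(9/(2\cdot13\cdot16)<1\), so \(\phi<\pi/2\), and we can invoke the following standard cap bound.

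\emph{Step 3: bound the cap measure by projection.} We claim that for \(\epsilon\le1\),
\[
\PP\{\ip ev\ge\sqrt{1-\epsilon^2}\}\le \epsilon^{p-1}.
\]
Project the cap \(\{\ip ev\ge \cos\phi_0\}\), with \(\sin\phi_0=\epsilon\), onto the hyperplane \(e^\perp\). It maps into the \((p-1)\)-ball of radius \(\epsilon\), while the upper hemisphere projects onto the unit ball. We compare surface measures through the density \(1/\ip ev\) of the projection. This density is smallest, equal to one, at \(\ip ev=1\), so it is smaller on the cap than on average over the hemisphere. Therefore the cap's fraction of the hemisphere is at most \(\epsilon^{p-1}\), and its fraction of the sphere is at most \(\epsilon^{p-1}/2\).

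One could instead write the cap fraction via the density \(\propto(1-x^2)^{(p-3)/2}\) of \(\ip ev\) and bound it directly. For \(p=2\) this density is not monotone, which is another reason to prefer the projection argument.

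\emph{Step 4: combine.} With \(\epsilon=C_1w/R\le 6w/R\le 3/8<1\), we obtain \(\PP\le(6w/R)^{p-1}\), which is within the stated factor \(2\). The factor \(2\) and the constant \(6\) give slack, so the crude constant in Step 2 suffices.

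The main obstacle is Step 3: making the cap-area comparison rigorous, uniform in \(p\ge2\), with the exponent exactly \(p-1\) and no hidden dimension-dependent constant. The monotone-density projection argument settles this.
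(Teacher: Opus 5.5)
Your proof is correct. It shares the paper's basic strategy: confine $v$ to a spherical cap of radius $O(w/R)$, then bound the cap's normalized area by projecting to a tangent hyperplane. Both steps, however, are carried out differently.

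The paper centers the cap at an arbitrary admissible point $v_0$ of the event rather than at $u/\norm{u}$. The triangle inequality then gives $\norm{v-v_0}\le 6w/R\le 3/8$ for every admissible $v$ at once, with no need to locate $\norm{u}$ or compute angles. On such a small cap the inverse Jacobian of the projection is at most $2$ pointwise. Combined with $\operatorname{vol}(B_{p-1})\le\operatorname{area}(S^{p-1})$, this produces exactly the factor $2$ in \eqref{eq:small-ball}.

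Your law-of-cosines reduction costs a short computation but gives a smaller radius, $12w/(\sqrt{13}R)$ instead of the chord bound $6w/R$. Your comparison with the full hemisphere then gives $\PP\le\frac12(6w/R)^{p-1}$, a sharper constant than stated. The paper's route is shorter and needs only a local, pointwise Jacobian bound on a cap of chord radius at most $3/8$.

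One wording point in Step 3: for $\epsilon$ near $1$, the density $g(x)=1/\sqrt{1-\norm{x}^2}$ is not pointwise below its hemisphere average on the cap. What you need, and what radial monotonicity gives, is the comparison of means. Substituting $x=\epsilon y$ and using $g(\epsilon y)\le g(y)$ gives $\frac{1}{\operatorname{vol}(B_\epsilon)}\int_{B_\epsilon}g\,dx=\frac{1}{\operatorname{vol}(B_1)}\int_{B_1}g(\epsilon y)\,dy\le\frac{1}{\operatorname{vol}(B_1)}\int_{B_1}g\,dy$.
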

\begin{proof}
If the intersection is nonempty, fix one $v_0$ in it.  Any other admissible
$v$ satisfies $\norm{v-v_0}\le6w/R=:\delta\le3/8$.  Rotate $v_0$ to the
north pole.  Orthogonal projection of this spherical cap to the tangent
$(p-1)$-plane is contained in the radius-$\delta$ ball, and the inverse
Jacobian is at most two on this cap.  The cap area is therefore at most
$2\operatorname{vol}(B_{p-1})\delta^{p-1}$.  Divide by
$\operatorname{area}(S^{p-1})$ and use
$\operatorname{vol}(B_{p-1})\le\operatorname{area}(S^{p-1})$.
\end{proof}

\subsection{Reliability dimension by hierarchy level}\label{sec:allocation}
For the rest of the construction fix
\begin{equation}\label{eq:allocation}
 p_{\rm low}=\max\{8,\lceil C_p\log(2+M)\rceil\},\qquad
 j_*=2+\lceil\log_2(2+M)\rceil,
\end{equation}
and for hierarchy heights $q\ge2$ set
\begin{equation}\label{eq:level-dimension}
 p_q=\begin{cases}
 p_{\rm low},&2\le q<j_*,\\
 8,&q\ge j_*.
 \end{cases}
\end{equation}
The numerical leaf and the first outer height use $p_{\rm low}$-dimensional
registers.  Different heights always use different physical blocks.  The
analytic estimates above are independent of dimension.

At height $q$ the radius ratio below will be
$R_q/w_q=K2^{q-2}$.  Thus a fixed-point premature hit has probability
\[
 \theta_q(p_q)=2\left(\frac6{K2^{q-2}}\right)^{p_q-1}.
\]
When $p_q=8$,
\begin{equation}\label{eq:fixed-tail}
 \sum_{q=j_*}^{\infty}8^q\theta_q(8)
 =\frac{2048}{15}(6/K)^7 16^{-(j_*-2)}
 \le\frac{2048}{15}(6/K)^7(2+M)^{-4}.
\end{equation}
This identity will remove the factor $\log M$ from the physical depth cost.

\section{The First Hierarchy Level}\label{sec:first-height}
Instantiate the continuous matrix construction of Sections
\ref{sec:matrix}--\ref{sec:leaf} with
\begin{equation}\label{eq:matrix-rank-choice}
 r=r_A:=\lceil K_AM\rceil
\end{equation}
for a sufficiently large absolute $K_A$, and use register dimension
$p_{\rm low}$ in every leaf.  Draw all leaf matrices independently from
the conditioned law of Section~\ref{sec:matrix}; each leaf has its own
marks and inner directions, but all leaves share the same physical leaf
coordinate $Z=(y,X)$ and the same public quadratic $Q_0$.

Choose a power of two $R_0\in [M,2M)$ and put $R_1=4R_0$.  At a height-one
node introduce eight scaled register blocks, each with first scale one,
last scale $R_1$, and dimension $p_{\rm low}$ in every register.  Denote
their objects from Section~\ref{sec:leaf} by
$V_{v,i}^{(1)},Q_{v,i}^{(1)},B_{v,i}^{(1)},\pi_{v,i}^{(1)}$; the public
quadratics do not depend on the node $v$.  Set
\[
 Q_1=Q_0+\sum_{i=1}^8Q_i^{(1)},\qquad
 A_0=C_AM,\qquad S_1=A_0+56,\qquad
 A_1=8(S_1+112R_1),
\]
where $C_A$ is a sufficiently large absolute constant.  For a height-one
node with leaf children $vi$, define
\[
 \omega_{v,i}^{(1)}=(1-\pi_{v,i}^{(1)})
                 \prod_{j<i}\pi_{v,j}^{(1)},\qquad
 \omega_{v,\infty}^{(1)}=\prod_{j=1}^8\pi_{v,j}^{(1)},
\]
and
\begin{equation}\label{eq:height-one}
\begin{aligned}
 F_v^{(1)}={}&Q_0+
 \sum_{i=1}^8\omega_{v,i}^{(1)}
 \bigl[E_{vi}-Q_0-(i-1)S_1-B_{v,i}^{(1)}o_{vi}\bigr]\\
 &-8S_1\omega_{v,\infty}^{(1)}+\sum_{i=1}^8V_{v,i}^{(1)},
 \qquad o_v^{(1)}=\pi_{v,8}^{(1)}.
\end{aligned}
\end{equation}

\begin{lemma}[Complete first-height interface]\label[lemma]{lem:first-height}
Uniformly in the node, $M$, and $p_{\rm low}$, the following hold.
\begin{enumerate}[label=\textup{(\roman*)},leftmargin=*]
\item With $D_v^{(1)}=F_v^{(1)}-Q_1$, one has $D_v^{(1)}\le0$ and
$-A_1\le D_v^{(1)}\le0$ whenever $\norm y\le Y$.  Globally
$F_v^{(1)}\ge\norm y^2/2+Q_X+\sum_iQ_i^{(1)}-A_1-CM$.
\item For every $\lambda\ge0$,
$\norm{\nabla(F_v^{(1)}-\lambda o_v^{(1)})}\le1$ forces all eight
$\pi_{v,i}^{(1)}=1$.
\item For $j=1,2,3$,
\[
 \norm{D^jD_v^{(1)}}\le C M^{3-j}.
\]
In particular the third-derivative bound is absolute.
\end{enumerate}
\end{lemma}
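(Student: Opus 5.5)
We treat the first height as a stick-breaking selector over eight children, each of which is a capped leaf from Lemma~\ref{lem:leaf}. The plan is to reduce (i) and (iii) to Lemma~\ref{lem:selector} and Lemma~\ref{lem:latch}, and to prove (ii) by the same first-nonactivated-child induction used in Lemma~\ref{lem:inner}.

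\textbf{(i) Value bounds.} Since $\sum_i\omega_{v,i}^{(1)}+\omega_{v,\infty}^{(1)}=1$ and $Q_i^{(1)}$ is public, we expand
\[
D_v^{(1)}=\sum_i\omega_{v,i}^{(1)}\bigl[D_{vi}-(i-1)S_1-B_{v,i}^{(1)}o_{vi}\bigr]-8S_1\omega_{v,\infty}^{(1)}+\sum_iW_{V,i}^{(1)}.
\]
Every payload is nonpositive: $D_{vi}\le0$, $B\ge0$, and $o\in[0,1]$. Each $W_V$ is also nonpositive, so $D_v^{(1)}\le0$.

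For $\norm y\le Y$, Lemma~\ref{lem:leaf}(i) gives $D_{vi}\ge-C_YM\ge-A_0$ once $C_A\ge C_Y$. Lemma~\ref{lem:latch} gives $B\le56$ (first scale one), so each payload is at least $-8S_1$. The eight self residuals contribute at least $-8\cdot112R_1$. Together these give $D_v^{(1)}\ge-A_1$.

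The global lower bound follows by the same computation. We use Lemma~\ref{lem:leaf}(iii), $E_{vi}\ge\norm y^2/2+Q_X-CM$, in place of the bounded-$y$ estimate. The self terms are then bounded by $Q_i^{(1)}-112R_1$.

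\textbf{(iii) Derivative bounds.} We apply Lemma~\ref{lem:selector} with the eight last registers as selectors, with $R=R_1\asymp M$, and with payloads $E_i=D_{vi}-(i-1)S_1-B_{v,i}^{(1)}o_{vi}$.

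The payloads have the required structure. $D_{vi}$ satisfies $\norm{D^qD_{vi}}\le CM^{3-q}$, so $S_0\asymp M$ with $C_0$ absolute. The term $B_{v,i}^{(1)}o_{vi}$ has bounded value, and its derivatives are bounded by the product rule using Lemma~\ref{lem:latch} and Lemma~\ref{lem:leaf}. The product rule may need the bound $\norm{D^jo}\le C$ from $R_\bullet$ absolute. These payloads depend on the first register of block $i$ and on leaf coordinates, never on the last registers, because every block has at least two registers.

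Since $S_0/R_1\le C$, the selector lemma gives a third derivative of order $C_0=O(1)$. The lower-order product-rule bounds give $CM^{3-j}$.

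The self residuals $W_{V,i}^{(1)}$ occupy disjoint blocks. They contribute an absolute constant to $D^2$ and $D^3$, and $O(1)$ to $D$ over the eight blocks, though each gradient is $C\sqrt{k_0'}=O(\sqrt{\log M})\le CM^2$.

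\textbf{(ii) Activation.} Suppose $\norm{\nabla(F_v^{(1)}-\lambda o_v^{(1)})}\le1$, and take the first $i$ with $\pi_{v,i}^{(1)}\ne1$. Lemma~\ref{lem:leaf}(ii) gives $\norm y\le Y$.

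\emph{Case $\pi_{v,i}^{(1)}=0$.} Then $\omega_{v,i}^{(1)}=1$ with flat jets, so the leaf coordinates see $E_{vi}-B_{v,i}^{(1)}o_{vi}$. The latter has the form $\nabla(E-\lambda' o)$ with $\lambda'=B\ge0$, but $B$ depends on block $i$ rather than on the leaf coordinates, so the leaf gradient is exactly of that form. Lemma~\ref{lem:leaf}(iii) then gives $o_{vi}=1$.

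The gradient in block $i$ is therefore $\nabla(V_{v,i}^{(1)}-B_{v,i}^{(1)})$. When $i=8$ there is an additional favorable $-\lambda\nabla\pi$ term. Lemma~\ref{lem:latch} forces $\pi_{v,i}^{(1)}=1$, a contradiction.

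\emph{Case $0<\pi_{v,i}^{(1)}<1$.} The last-register coefficient multiplies the difference between the current payload and the tail. The tail consists of later payloads and $-8S_1$, both lying in $[-8S_1-C,0]$ near bounded $y$. Here we need the difference bounded uniformly, including the $D_{vi}$ differences across distinct children, which are only bounded by $A_0=C_AM$ and not absolutely.

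This is the main obstacle. The coefficient is of order $(8S_1)\norm{\varpi'}/R_1\approx 8C_AM\cdot C/(4R_0)$, and $R_0\ge M$ does not by itself make it at most $1/2$. I would resolve it in one of two ways. The first option is to check that the shift by $(i-1)S_1$ makes the relevant force one-sided: the tail is always at most the current payload minus roughly $S_1$ plus error. The exclusion \eqref{eq:transition-exclusion} allows any $\xi\le1/2$ with no lower bound, so a one-sided sign suffices. The second option, used only if needed, is to enlarge $R_0$ by an absolute factor. I expect the monotone ordering $-(i-1)S_1$ to be exactly what produces this sign: the payload $E_i\ge-iS_1+\ldots$ exceeds every later payload, giving a nonpositive $\xi$. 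Once the transition is excluded, all eight selectors equal one, which proves (ii).
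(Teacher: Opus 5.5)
Your proposal is essentially the paper's proof. It uses the same payload expansion and bounded-$y$ band estimates for (i), Lemma~\ref{lem:selector} with $S_0\asymp R_1\asymp M$ plus the product rule for (iii), and the first-unactivated-child argument via Lemma~\ref{lem:leaf}(ii),(iii) and \eqref{eq:latch-activate} for (ii).

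The transition case you left hedged closes by your ``first option,'' which is exactly what the paper does. Because $S_1=A_0+56$, on $\norm y\le Y$ each payload satisfies $H_i\in[-iS_1,-(i-1)S_1]$ with no error term. Every later payload and the terminal value $-8S_1$ are therefore at most $-iS_1\le H_i$, so $H_{\rm tail}\le H_i$ and $\xi\le0$ in \eqref{eq:transition-exclusion}, and no enlargement of $R_0$ is needed.
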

\begin{proof}
On $\norm y\le Y$, Lemma~\ref{lem:leaf} gives
$E_{vi}-Q_0\in[-A_0,0]$, after increasing $C_A$, and
$0\le B_{v,i}^{(1)}o_{vi}\le56$.  Hence the $i$th relative payload lies in
$[-iS_1,-(i-1)S_1]$, while the terminal payload is $-8S_1$.
The weighted payloads are a convex combination.  Each self residual is in
$[-112R_1,0]$, proving the first bounded-$y$ assertion.  The global lower
bound keeps the common positive quadratics and uses the leaf lower bound;
only one leaf perturbation is selected by the convex weights.

The numerical $y$ gradient is a convex mixture of the positive matrices and
bounded forces of Lemma~\ref{lem:leaf}; all new objects are independent of
$y$.  Thus a full gradient of norm at most one first implies $\norm y\le Y$.
Take the first $i$ with $\pi_{v,i}^{(1)}\ne1$.  If it equals zero, the lower
coordinate gradient is that of
$E_{vi}-B_{v,i}^{(1)}o_{vi}$.  Lemma~\ref{lem:leaf} forces $o_{vi}=1$,
and the incoming-one latch implication \eqref{eq:latch-activate} contradicts
$\pi_{v,i}^{(1)}=0$.  If $0<\pi_{v,i}^{(1)}<1$, the current and tail
payloads satisfy $H_{\rm tail}\le H_i$.  Since the first and last latch
registers are distinct, the only new term in the last-register force has
coefficient
\[
 \xi=\frac{\varpi'(u)}{R_1}(H_{\rm tail}-H_i)
 -\mathbf1_{\{i=8\}}\frac{\lambda\varpi'(u)}{R_1}\le0.
\]
The transition exclusion \eqref{eq:transition-exclusion} gives the
contradiction.

For derivatives, the leaf relative payloads have scales
$CM^{3-j}$ by Lemma~\ref{lem:leaf}; selector derivatives are at scale
$R_1\asymp M$.  Lemma~\ref{lem:selector} bounds the weighted leaf part.
The eight incoming products have bounded derivatives through order three
because $B_{v,i}^{(1)}$ uses the first register, $o_{vi}$ uses leaf
coordinates, and there are only eight terms.  Lemma~\ref{lem:latch} bounds
the self residuals.  The product rule gives the stated three scales.
\end{proof}

\section{A Hierarchy with Heterogeneous Register Dimensions}\label{sec:hierarchy-log}
Let $n=8^L$.  Height zero consists of the leaves; height one is
\eqref{eq:height-one}.  At every height $q\ge2$ choose one sufficiently
large absolute geometric constant $K\ge384$ and set
\begin{equation}\label{eq:high-scales}
 w_q=R_0 3^{q-2},\qquad R_q=K R_0 6^{q-2}.
\end{equation}
At that height introduce eight physical vectors
$U_i^{(q)}\in\R^{p_q}$.  All nodes of a fixed height share these physical
blocks, but each node $v$ and child index $i$ has its own independent
uniform center
\[
 c_{v,i}^{(q)}=R_qv_{v,i}^{(q)},\qquad
 v_{v,i}^{(q)}\sim\operatorname{Unif}(S^{p_q-1}).
\]
Use the localized objects
$V_{v,i}^{(q)},B_{v,i}^{(q)},\pi_{v,i}^{(q)}$ with width $w_q$.
Define the common baseline recursively by
\begin{equation}\label{eq:common-baseline}
 Q_q=Q_{q-1}+\sum_{i=1}^8P_{R_q}(U_i^{(q)}).
\end{equation}
Set $C_B=768$, $C_W=208$, and
\begin{equation}\label{eq:energy-rec}
 S_q=A_{q-1}+C_BR_q,
 \qquad A_q=8S_q+8C_Ww_q\qquad(q\ge2).
\end{equation}
For a node $v$ at height $q$, put
\[
 \omega_{v,i}=(1-\pi_{v,i})\prod_{j<i}\pi_{v,j},\qquad
 \omega_{v,\infty}=\prod_{j=1}^8\pi_{v,j},
\]
and define the literal full-space scalar
\begin{equation}\label{eq:hierarchy-recursion}
\boxed{\begin{aligned}
 F_v^{(q)}={}&Q_{q-1}
 +\sum_{i=1}^8\omega_{v,i}
 \bigl[F_{vi}^{(q-1)}-Q_{q-1}-(i-1)S_q
       -B_{v,i}o_{vi}^{(q-1)}\bigr]\\
 &-8S_q\omega_{v,\infty}+\sum_{i=1}^8V_{v,i},
 \qquad o_v^{(q)}=\pi_{v,8}.
\end{aligned}}\end{equation}
The root is $F=F_\varnothing^{(L)}$.  All matrices, marks, inner directions,
and localized centers are sampled before the algorithm's private seed.

\begin{lemma}[Ordered values, activation, energy, and coercivity]
\label[lemma]{lem:hierarchy-analytic}
Let $D_v^{(q)}=F_v^{(q)}-Q_q$.  Uniformly over all finite depths and all
inputs:
\begin{enumerate}[label=\textup{(\roman*)},leftmargin=*]
\item $D_v^{(q)}\le0$, and $-A_q\le D_v^{(q)}\le0$ whenever
$\norm y\le Y$.  Globally $D_v^{(q)}\ge-A_q-CM^3$.
\item $F_v^{(q)}$ is coercive and is bounded below by $\norm y^2/2$ plus
all its common positive memory baselines, minus $A_q+CM$.
\item For every $\lambda\ge0$,
$\norm{\nabla(F_v^{(q)}-\lambda o_v^{(q)})}\le1$ forces all eight
immediate completion ports to one.
\item $A_q\le C8^qM$, and for the root
$F(0)-\inf F\le CnM$.
\end{enumerate}
\end{lemma}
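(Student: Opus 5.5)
We argue by induction on the height $q$, with Lemma~\ref{lem:first-height} as the base case $q=1$. At each step we transfer (i)--(iii) from the children $F_{vi}^{(q-1)}$ to $F_v^{(q)}$ through the recursion \eqref{eq:hierarchy-recursion}. The key observation is that $F_v^{(q)}-Q_q$ has the same shape as the height-one formula \eqref{eq:height-one}. It is a stick-breaking convex combination of relative payloads
\[
 H_i=D_{vi}^{(q-1)}-(i-1)S_q-B_{v,i}o_{vi}^{(q-1)}
\]
and a terminal payload $-8S_q$, plus the localized self residuals $\sum_i(V_{v,i}-P_{R_q}(U_i^{(q)}))=\sum_i\psi_{c_{v,i}}$.

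For (i), on $\norm y\le Y$ the induction hypothesis gives $D_{vi}^{(q-1)}\in[-A_{q-1},0]$. Lemma~\ref{lem:localized-bounds} gives $0\le B_{v,i}o_{vi}\le768R_q$, since the port satisfies $o\in[0,1]$. Hence $H_i\in[-iS_q,-(i-1)S_q]$ by the definition $S_q=A_{q-1}+C_BR_q$. The convex weights place the mixture in $[-8S_q,0]$, and the residuals $\psi_c\in[-208w_q,0]$ add at most $8C_Ww_q$. This yields $-A_q\le D_v^{(q)}\le0$. The global sign uses $D_{vi}^{(q-1)}\le0$ and $B_{v,i}o\ge0$. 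For the global lower bound we keep the common baselines, as in Lemma~\ref{lem:first-height}(i). Off the ball $\norm y\le Y$, only the leaf numerical correction can exceed $A_q$, and it is bounded by $CM^3$ by Lemma~\ref{lem:leaf}(i), since it is one convex selection. Part (ii) follows the same way: the leaf lower bound $\norm y^2/2+Q_X-CM$ is carried through convex mixtures, and every added baseline $P_{R_q}$ is coercive and nonnegative.

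For (iii), Lemma~\ref{lem:leaf}(ii) first forces $\norm y\le Y$, because all new objects are independent of $y$. Take the first $i$ with $\pi_{v,i}\ne1$. If $\pi_{v,i}=0$, flat jets reduce the gradient in the child coordinates to that of $F_{vi}^{(q-1)}-B_{v,i}o_{vi}$. Since $B_{v,i}\ge0$, the induction hypothesis with $\lambda=B_{v,i}$ gives $o_{vi}=1$. The gradient in $U_i^{(q)}$ is then $\nabla(V_{v,i}-B_{v,i})$. Lemma~\ref{lem:localized-activation} then gives $\pi_{v,i}=1$, which is a contradiction.

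If $0<\pi_{v,i}<1$, then $w_q<t<2w_q$. The $U_i$-gradient is
\[
 \nabla V_{v,i}-\omega_{v,i}\nabla B_{v,i}\,o+(H_{\rm tail}-H_i)\nabla\pi_{v,i}-\mathbf 1_{\{i=8\}}\lambda\nabla\pi,
\]
where $\tau=\omega_{v,i}o\in[0,1]$. On the relevant domain the ordering gives $H_{\rm tail}\le H_i$, so the $\nabla\pi$ coefficient is $\zeta\le0$. Then \eqref{eq:loc-robust} excludes the transition.

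This ordering step is the delicate one. It requires the ordering on bounded $y$, which we have already secured, and it requires that $B_{v,i}$ and $\pi_{v,i}$ depend on the same block $U_i$. The latter is handled exactly by the one-sided robustness in Lemma~\ref{lem:localized-activation}.

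For (iv), the recursion gives $A_q=8A_{q-1}+8C_BR_q+8C_Ww_q$ with $R_q,w_q\le CM6^{q}$. Unrolling,
\[
 A_q\le 8^{q-1}A_1+C M\sum_{j\le q}8^{q-j}6^j\le C8^qM,
\]
because $6/8<1$ makes the geometric sum converge and $A_1=O(M)$. At the root, $F(0)=Q_L(0)+D(0)$ with $Q_L(0)=Q_0(0)=0$, since $P_R(0)=0$. Also $D(0)\le0$, and part (ii) bounds $\inf F$ below by $-A_L-CM$. Any constant offset such as $E_\nu(0)=-10H_0^2$ is absolute. Hence $F(0)-\inf F\le A_L+CM\le CnM$.

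The main obstacle is the ordering $H_{\rm tail}\le H_i$ at the transition. I will verify it using (i) of the children on $\norm y\le Y$ together with the bound $B\le768R_q=C_BR_q$, which the choice of $S_q$ absorbs.
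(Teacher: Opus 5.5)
Your proposal is correct and follows essentially the same route as the paper's proof. It uses the same induction from Lemma~\ref{lem:first-height}, the same payload ordering $H_i\in[-iS_q,-(i-1)S_q]$ on $\norm y\le Y$, and the same two-case activation argument: the child hypothesis with $\lambda=B_{v,i}$ when $\pi_{v,i}=0$, and \eqref{eq:loc-robust} with $\tau=(1-\pi_{v,i})o_{vi}$ and $\zeta=H_{\rm tail}-H_i-\mathbf 1_{\{i=8\}}\lambda\le0$ in the transition. It also unrolls $A_q$ geometrically in the same way; the only cosmetic difference is in the root gap, where you bound $F(0)=D(0)\le0$ via $Q_L(0)=0$ instead of evaluating $F(0)$ through the first selected leaf as the paper does, and both give $F(0)-\inf F\le A_L+CM$.
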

\begin{proof}
The numerical $y$ derivative remains a convex mixture of the leaf positive
matrices and bounded leaf forces.  Every object added above the leaves is
independent of $y$.  A small full gradient therefore first gives
$\norm y\le Y$.

On that region the $i$th relative payload
\[
 H_i=D_{vi}^{(q-1)}-(i-1)S_q-B_{v,i}o_{vi}^{(q-1)}
\]
lies in $[-iS_q,-(i-1)S_q]$, while the terminal payload is $-8S_q$.
The weights including the terminal weight form a convex partition of one.
Every localized self residual $V_{v,i}-P_{R_q}$ belongs to
$[-C_Ww_q,0]$.  This proves the bounded-$y$ range and, in particular, the
ordered-tail inequality $H_{\rm tail}\le H_i$.  The coarse global
$-CM^3$ term is carried through a convex combination only once.  For the
true global lower bound, retain the positive capped numerical term and all
common memory baselines rather than using that coarse relative estimate.
Every finite baseline is coercive.

For activation, take the first $i$ with $\pi_{v,i}\ne1$.  If
$\pi_{v,i}=0$, the lower-coordinate gradient is exactly that of
$F_{vi}^{(q-1)}-B_{v,i}o_{vi}^{(q-1)}$; here $B_{v,i}\ge0$ is constant in
those lower coordinates.  The induction hypothesis forces the child port
to one.  The $U_i^{(q)}$ gradient is then $\nabla(V_{v,i}-B_{v,i})$, plus
a favorable outgoing term only when $i=8$, contradicting
Lemma~\ref{lem:localized-activation}.  If $0<\pi_{v,i}<1$, differentiation
in the same parent block gives exactly
\[
 \nabla V_{v,i}-(1-\pi_{v,i})o_{vi}\nabla B_{v,i}
 +(H_{\rm tail}-H_i-\mathbf1_{\{i=8\}}\lambda)\nabla\pi_{v,i}.
\]
The first coefficient lies in $[0,1]$ and the final scalar coefficient is
nonpositive.  Equation~\eqref{eq:loc-robust} excludes a small gradient.
The height-one base is Lemma~\ref{lem:first-height}.

Finally,
\[
 \frac{A_L}{8^L}=\frac{A_1}{8}
 +\sum_{q=2}^L\frac{C_BR_q+C_Ww_q}{8^{q-1}}\le CM,
\]
because $6<8$ and $3<8$.  At zero, all completion gates and localized
self residuals vanish and the recursion selects the first leaf with zero
offset.  Its value is $O(M)$, while the global lower bound is $-A_L-O(M)$.
This proves the gap.
\end{proof}

\section{Uniform Third-Derivative Bounds}\label{sec:regularity-log}
At height one, Lemma~\ref{lem:first-height} gives relative derivative scales
$CM^2,CM,C$.  For $q\ge2$, let $a_{j,q}$ bound
$\norm{D^jD_v^{(q)}}$ over all nodes and points, and put
$J_q=C(M^3+M8^q)$.  Let $b_{q-1}$ be the scale of the child output port:
$b_1=R_1$ and $b_{q-1}=w_{q-1}$ for $q\ge3$.  Then $b_{q-1}$ and $w_q$
differ by an absolute factor.

\begin{lemma}[Incoming-coupling derivative cost]\label[lemma]{lem:coupling-cost}
For a term $\omega_iB_io_{vi}$ at height $q\ge2$, all product-rule terms
that are new at that height have total bounds
\[
 C(1+R_q/b_{q-1}),
\]
\[
 C(w_q^{-1}+b_{q-1}^{-1}+R_q/b_{q-1}^2),
\]
and
\[
 C(w_q^{-2}+b_{q-1}^{-2}+R_q/b_{q-1}^3)
\]
at derivative orders one, two, and three.  The localized self residuals
have order-$j$ bound $Cw_q^{1-j}$.  The constants do not depend on the
register dimension or descendant count.
\end{lemma}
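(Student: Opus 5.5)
The plan is to expand the product $\omega_iB_io_{vi}$ by the Leibniz rule and bound each factor's derivatives separately. The three factors live on disjoint coordinate blocks. The weight $\omega_i=(1-\pi_{v,i})\prod_{j<i}\pi_{v,j}$ depends only on the eight parent blocks $U_j^{(q)}$. The coefficient $B_i=B_{v,i}$ depends only on $U_i^{(q)}$. The port $o_{vi}=o_{vi}^{(q-1)}$ depends only on the child's last block: the height-one last register of scale $R_1$ when $q=2$, or $U_8^{(q-1)}$ when $q\ge3$. Because the blocks are disjoint, each mixed derivative tensor has operator norm at most the product of the factor norms. The terms that are \emph{new} at height $q$ are precisely those in which at least one derivative falls on $\omega_i$ or on $B_i$. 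The term where every derivative hits $o_{vi}$, with $\omega_iB_i$ as a bounded coefficient, is part of the child's recursion and is accounted for there.

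The factor bounds come from earlier lemmas, with $0\le\pi,\omega,o\le1$ throughout.
\begin{itemize}
\item \emph{Weight.} Each $\pi_{v,j}$ satisfies $\norm{D^k\pi_{v,j}}\le Cw_q^{-k}$ by \eqref{eq:loc-derivatives}. Since $\omega_i$ is a product of at most eight such factors on disjoint blocks, the product rule gives $\norm{D^k\omega_i}\le Cw_q^{-k}$, with a constant independent of $p_q$. Alternatively, Lemma~\ref{lem:selector} gives the same bound.
\item \emph{Coefficient.} By \eqref{eq:loc-values} and \eqref{eq:loc-Bder}, $B_i\le768R_q$, $\norm{DB_i}\le C$, $\norm{D^2B_i}\le C/w_q$ and $\norm{D^3B_i}\le C/w_q^2$.
\item \emph{Port.} $\norm{D^ko_{vi}}\le Cb_{q-1}^{-k}$. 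For $q\ge3$ this is \eqref{eq:loc-derivatives} with width $w_{q-1}$; for $q=2$ it is the bound $\norm{D^j\pi}\le C/R^j$ of Lemma~\ref{lem:latch} with $R=R_1$.
\end{itemize}
I will also use that $w_q$, $b_{q-1}$ and $R_q$ are comparable up to the absolute constant $K$ and fixed powers of $3,6$ per level: $w_q\asymp b_{q-1}$ and $R_q/w_q=K2^{q-2}$. Keeping $R_q$ explicit is what allows the dominant terms to be isolated.

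At order one, the new terms are $D\omega_i\cdot B_io$, $\omega_i\,DB_i\,o$, and, from the mixed part, $\omega_iB_i\,Do$. These are bounded by $R_q/w_q$, $C$ and $R_q/b_{q-1}$ respectively. I must be careful here: the term $D\omega_i\,B_i\,o$ is of size $R_q/w_q$, which is not of the stated form unless it is absorbed using $w_q\asymp b_{q-1}$. So I will absorb $R_q/w_q\le CR_q/b_{q-1}$, and similarly $R_q/w_q^k$ into $R_q/b_{q-1}^k$. At order two, the typical terms are $D^2\omega\,B$, $D\omega\,DB$, $D^2B$, $DB\,Do$, $B\,D^2o$ and $D\omega\,B\,Do$. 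They are bounded in turn by $R_q/w_q^2$, $1/w_q$, $1/w_q$, $1/b_{q-1}$, $R_q/b_{q-1}^2$ and $R_q/(w_qb_{q-1})$, all of which lie within $C(w_q^{-1}+b_{q-1}^{-1}+R_q/b_{q-1}^2)$ after absorption. Order three is identical, with each term acquiring one further inverse scale.

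The remaining claim is that the localized self residuals $V_{v,i}-P_{R_q}=\psi$ have order-$j$ bound $Cw_q^{1-j}$; this is exactly \eqref{eq:loc-derivatives}, applied to eight terms. The public part $P_{R_q}$ belongs to the common baseline $Q_q$, not to $D_v^{(q)}$.

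The main obstacle is keeping the constants independent of the descendant count and of $p_q$. The disjointness of the blocks is what makes the operator norms of the tensor products factor without a dimension loss, via the bound $\sum\norm{x_i}\norm{y_i}\norm{z_i}\le1$ used in Lemma~\ref{lem:latch}. There are only eight children, each contributing a single such product. The other delicate point is the absorption of $w_q$ into $b_{q-1}$; I will verify the ratios directly from \eqref{eq:high-scales}: $w_q/w_{q-1}=3$, and $w_2=R_0=R_1/4$.
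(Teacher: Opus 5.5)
Your proposal is correct and is essentially the paper's argument: the Leibniz rule with the selector bound $Cw_q^{-j}$, the bounds $|B_i|\le CR_q$ and $\norm{D^jB_i}\le Cw_q^{1-j}$ from Lemma~\ref{lem:localized-bounds}, the port bound $Cb_{q-1}^{-j}$, absorption via $w_q\asymp b_{q-1}$, and \eqref{eq:loc-derivatives} for the self residuals. Two remarks in your setup are inaccurate but harmless: the term with every derivative on $o_{vi}$ is not part of the child's recursion, since its coefficient $\omega_iB_i$ has size $R_q$ and $\omega_iB_iD^ko_{vi}$ is exactly the source of the $R_q/b_{q-1}^k$ terms (your computation does include it); and $\omega_i$ and $B_i$ both depend on $U_i^{(q)}$, so the factors are not on disjoint blocks---but the product-rule operator-norm bound never needed disjointness.
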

\begin{proof}
There are eight terms.  The selector derivatives have size
$Cw_q^{-j}$, Lemma~\ref{lem:localized-bounds} gives
$|B_i|\le CR_q$ and its positive-order bounds, and the child port has
$\norm{D^jo_{vi}}\le Cb_{q-1}^{-j}$.  Apply the product rule.  Dependence
of $B_i$ and $\omega_i$ on the same parent vector has been included; parent
and child variables are distinct.  The self-residual estimates are
\eqref{eq:loc-derivatives}.
\end{proof}

\begin{theorem}[Absolute Hessian regularity]\label{thm:regularity-log}
Uniformly in depth, input, and the heterogeneous register dimensions,
\begin{equation}\label{eq:relative-derivatives}
 a_{1,q}\le C\{M^2+(8/3)^q\},\qquad
 a_{2,q}\le CM,\qquad a_{3,q}\le C.
\end{equation}
Consequently the root is globally $C^3$ and
$\Lip(\nabla^2F)\le C_H$ for one absolute constant $C_H$.
\end{theorem}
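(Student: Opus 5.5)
We will prove \eqref{eq:relative-derivatives} by induction on the height $q$, starting from the base case supplied by Lemma~\ref{lem:first-height}(iii), which gives $a_{1,1}\le CM^2$, $a_{2,1}\le CM$, $a_{3,1}\le C$. The quantity to control is $D_v^{(q)}=F_v^{(q)}-Q_q$. Subtracting \eqref{eq:common-baseline} from \eqref{eq:hierarchy-recursion} expresses it as
\[
D_v^{(q)}=\sum_i\omega_{v,i}H_i-8S_q\omega_{v,\infty}+\sum_i\bigl(V_{v,i}-P_{R_q}(U_i^{(q)})\bigr),
\]
where the payload is $H_i=D_{vi}^{(q-1)}-(i-1)S_q-B_{v,i}o_{vi}^{(q-1)}$.

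The plan splits $D_v^{(q)}$ into three parts.
\begin{enumerate}[label=(\alph*)]
\item The localized self residuals $\psi_{v,i}$. By \eqref{eq:loc-derivatives} these have order-$j$ bounds $Cw_q^{1-j}$, which are tiny for $j\ge2$.
\item The incoming couplings $\omega_{v,i}B_{v,i}o_{vi}$. Lemma~\ref{lem:coupling-cost} handles these. Since $R_q/b_{q-1}$ is an absolute constant times $2^q$ and $b_{q-1}\asymp w_q=R_03^{q-2}$, the bounds are $C2^q$ at first order, $C(2/3)^q/M$-type terms at second order, and $C(2/9)^q/M^2$ at third order. All of these are summable or bounded.
\item The selector-weighted inherited part $\sum_i\omega_{v,i}(D_{vi}^{(q-1)}-(i-1)S_q)-8S_q\omega_{v,\infty}$.
\end{enumerate}

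The main obstacle is part (c). The constant offsets $(i-1)S_q$ and $8S_q$ have size $A_{q-1}\asymp 8^qM$. Differentiating the selector weights at scale $w_q\asymp M3^q$ would give first-order contributions of order $(8/3)^q$. That is exactly the $(8/3)^q$ term in $a_{1,q}$, so it is acceptable there. At second and third order, however, one gets $8^q/(M9^q)$ and $8^q/(M^227^q)$, which are still summable. So the offsets cost only bounded amounts at orders two and three.

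To control the inherited child perturbations without multiplying by the number of descendants, we will use the disjointness of physical blocks. The eight children at height $q-1$ share one set of physical coordinates. The selector weights $\omega_{v,i}$ depend only on the height-$q$ blocks $U^{(q)}$, which are disjoint from all lower coordinates.

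We therefore apply Lemma~\ref{lem:selector}, whose stick-breaking argument is dimension- and count-free, with payloads $H_i$ and selector scale $w_q$, in its product-rule form. The crucial point is that the mixed terms behave well. A term such as $D\omega\otimes D^2H$ costs $w_q^{-1}a_{2,q-1}$, and $D^2\omega\otimes DH$ costs $w_q^{-2}a_{1,q-1}$. The pure term $\sum_i\omega_iD^3H_i$ is a convex combination, so it costs only $a_{3,q-1}$ with no growth.

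This yields recursions of the form
\begin{align*}
a_{3,q}&\le a_{3,q-1}+C\bigl(w_q^{-1}a_{2,q-1}+w_q^{-2}a_{1,q-1}+w_q^{-3}(A_{q-1}+R_q)\bigr)+\text{(b)},\\
a_{2,q}&\le a_{2,q-1}+C\bigl(w_q^{-1}a_{1,q-1}+w_q^{-2}(A_{q-1}+R_q)\bigr)+\dots,
\end{align*}
together with the analogous recursion for $a_{1,q}$.

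To close the induction we insert the hypotheses $a_{1,q-1}\le C(M^2+(8/3)^{q})$, $a_{2,q-1}\le CM$, and $A_{q-1}\le C8^qM$ from Lemma~\ref{lem:hierarchy-analytic}(iv), together with $w_q=R_03^{q-2}\ge M3^{q-2}$. Each increment is then bounded by $C\rho^q$ for some ratio $\rho<1$, namely $1/3$, $2/3$, $8/9$, or $8/27$. The increments are therefore summable, and the sums are independent of depth. The same computation gives $a_{1,q}\le a_{1,q-1}+C(8/3)^q+C2^q$, which sums to $C(M^2+(8/3)^q)$.

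The one place to check carefully is that the series constants do not accumulate with $q$. Since every increment is geometric, the final constant is the base constant plus a convergent series, so the bound is uniform in depth. Dimension independence holds because every cited lemma is dimension-free.

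Finally, $F=Q_L+D^{(L)}_\varnothing$. The third derivatives of $Q_L$ are bounded as follows:
\begin{itemize}
\item $Q_0$ is quadratic, so its third derivative vanishes.
\item $Q_X$ and the $Q_i^{(1)}$ are quadratic, so their third derivatives vanish.
\item $D^3P_{R_q}$ is bounded by $CR_q^{-2}$ on disjoint blocks, and this is summable.
\end{itemize}
Hence $\norm{D^3F}\le C_H$. Global $C^3$ regularity follows from the $C^3$ regularity of every profile, shown in Lemmas~\ref{lem:localized-bounds} and~\ref{lem:leaf}. Therefore $\Lip(\nabla^2F)\le C_H$.
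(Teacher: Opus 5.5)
Your proposal is correct and follows the paper's proof essentially step for step. The inherited child derivatives enter as a convex combination with coefficient one. Every other term carries a selector derivative and is bounded through Lemma~\ref{lem:coupling-cost}, the public offsets $S_q$, and the localized self residuals. The three recursions are summed geometrically in the order $a_1,a_2,a_3$. Finally, $D^3P_{R_q}$ is handled on disjoint blocks before integrating along segments.

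One detail needs correcting: the zeroth-order payload bound must be the global one, $|D_{vi}^{(q-1)}|\le A_{q-1}+CM^3$, which is the paper's $J_q=C(M^3+M8^q)$. The bound $D_{vi}^{(q-1)}\ge-A_{q-1}$ that you used holds only on $\norm y\le Y$. The extra $M^3$ adds only the summable increments $M^23^{-q}$, $M9^{-q}$, and $27^{-q}$ at orders one, two, and three, so your conclusions are unchanged.
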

\begin{proof}
The terms in which no derivative lands on a selector are a convex
combination of child derivative tensors, so the inherited highest-order
coefficient is one.  All other inherited terms contain at least one
selector derivative.  Lemma~\ref{lem:coupling-cost} and the public offsets
give
\begin{align*}
 a_{1,q}&\le a_{1,q-1}+C(J_q/w_q+1+R_q/b_{q-1}),\\
 a_{2,q}&\le a_{2,q-1}+C(a_{1,q-1}/w_q+J_q/w_q^2
 +w_q^{-1}+b_{q-1}^{-1}+R_q/b_{q-1}^2),\\
 a_{3,q}&\le a_{3,q-1}+C(a_{2,q-1}/w_q+a_{1,q-1}/w_q^2+J_q/w_q^3
 +w_q^{-2}+b_{q-1}^{-2}+R_q/b_{q-1}^3).
\end{align*}
Since $w_q\asymp M3^q$ and $R_q\asymp M6^q$, the first increments are
bounded by
$C\{M^2 3^{-q}+(8/3)^q+2^q+1\}$, proving the first estimate.  Substitution
into the second recurrence produces the summable terms
\[
 C\{M3^{-q}+M9^{-q}+M^{-1}(8/9)^q+M^{-1}3^{-q}+M^{-1}(2/3)^q\},
\]
and substitution into the third produces
\[
 C\{3^{-q}+9^{-q}+27^{-q}+M^{-2}(8/27)^q
      +M^{-2}(2/9)^q+M^{-2}9^{-q}\}.
\]
This proves \eqref{eq:relative-derivatives}; the exceptional height two is
absorbed into the absolute constants.  The common baseline at height $q$
contains eight copies of $P_{R_q}$ on disjoint blocks, each with third norm
$C/R_q^2$.  The trilinear norm of a sum on disjoint blocks is bounded by the
maximum block norm, by H\"older's inequality.  Each finite formula is $C^3$;
integration of the third derivative along line segments gives the global
Hessian-Lipschitz bound.
\end{proof}

\section{Exact Oracle Replies with Charged Repairs}\label{sec:repairs}
We now define a stronger oracle which always returns the literal jet of the
fixed presampled scalar.  It may disclose input data prematurely; every
marked task thereby retired is charged to a random variable $Z$.  This
replaces the global no-error flatness event used in simpler constructions.

\subsection{Presampling, retirement states, and repair loop}
Presample independently all $n$ leaf matrices from their conditioned laws,
their $M$ marks and inner directions, all height-one directions, and all
localized centers at higher nodes.  A leaf is retired or unretired.  The
current leaf is the first unretired leaf in lexicographic order, and its
marks retire in order.  A node is retired when all its leaves are retired.
Every leaf retirement, whether by normal completion, a rich query, or any
repair, discloses its full matrix, all marks and readouts, and every inner
direction before the reply is assembled. This rule applies to every leaf
in a subtree retired by a repair. Disclose all completion data within a
retired subtree as well, so that its entire scalar and output port are known.
When a child retires, disclose the parent completion data associated with
that child; propagate disclosures through newly completed ancestors.  A
normal rich query retires only its current leaf. Initially, before the
first point is selected, supply the first mark of the first leaf without
its readout. Whenever an unretired leaf first becomes current, supply its
first mark on the same convention. Supply of a mark starts its ordinary
slot counter at zero.

For an old or inner register with physical coordinate $U$, scale $r$, and
hidden unit direction $v$, the repair test is exactly
\begin{equation}\label{eq:memory-repair-test}
 \left|\left\langle v,\mathcal R(U/r)\right\rangle\right|>\frac18.
\end{equation}
Here $\mathcal R=\mathcal R_8$ is the radial squash already defined in
\Cref{sec:leaf}; there is no further division by a vector norm.

A node is visible at a selected point when it lies on the root-to-current
leaf chain in the current retirement state.  Before assembling any ordinary
or rich reply, fix the physical point and repeatedly apply the following
publicly ordered repair rules until none applies.
\begin{enumerate}[label=\textup{(\arabic*)},leftmargin=*]
\item At a visible node of height $q\ge2$, if an undisclosed center satisfies
$\norm{U_i^{(q)}-c_{v,i}^{(q)}}\le3w_q$, retire and disclose its whole child
subtree.  If $i=8$, retire the whole node.  Charge every newly retired marked
task to $Z$.
\item At height one, if an undisclosed old direction satisfies
\eqref{eq:memory-repair-test} in its register, retire the whole height-one
node and charge its newly retired tasks to $Z$.
\item In the current leaf, if an undisclosed inner direction satisfies
\eqref{eq:memory-repair-test}, or a not-yet-supplied mark $w$ satisfies
$|a(w)^TY(y)|>\gamma$, retire the leaf and charge its remaining tasks to $Z$.
\end{enumerate}
Whenever a repair opens a new current leaf at the same point, supply its
first mark and repeat the loop.  A fixed priority--ancestors first, then child,
register, and mark indices--makes the procedure deterministic given the raw
input.  Every repair retires at least one leaf, so the loop is finite.
After the loop, if a leaf remains, perform exactly one normal operation: an
ordinary query uses one current matrix product. The current marked task
retires at its first test $|a(w)^TY(y)|>\gamma$, or after $\ell$ ordinary
slots since its supply, whichever occurs first. At retirement, disclose
its readout and all directions in its corresponding inner register block,
including registers that were not hit. If it is the last mark, retire the
leaf under the full-disclosure rule above; otherwise supply the next mark
in this same reply. A rich query discloses and retires the entire current
leaf. Newly completed ancestor data and the first mark of any new current
leaf are supplied in the same reply. Marks supplied by a normal operation
are available for subsequent point selections; no second normal operation
is performed at the already selected point. A last-child latch is never disclosed
while its node remains unretired: an out-of-order hit of that latch retires
the whole node.

For numerical tests use the bounded vector
\begin{equation}\label{eq:bounded-test}
 Y(y)=\frac{\mathcal R_{B_0}(y)}{3B_0/2},\qquad \norm{Y(y)}\le1.
\end{equation}
The current mark's hidden readout is used internally only to decide its
normal hit event and is disclosed when that task retires.

\begin{lemma}[Every reply is the true jet]\label[lemma]{lem:true-jet}
The repaired comparison oracle returns the literal ordinary or rich jet of
the fixed presampled root scalar at every selected point.  Apart from
matrices disclosed through charged repairs, an ordinary reply needs at most
one product with the current matrix and a normal rich reply needs only that
one new matrix.  No unretired sibling matrix is contained for free in a
normal rich reply.
\end{lemma}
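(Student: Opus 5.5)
The plan is to prove that, after the repair loop terminates at the selected point, every part of the root scalar $F$ that has not been disclosed either contributes nothing to the jet or is fixed by one fresh product with the current matrix. The reply can then be assembled from disclosed data, public formulas, and that one product. I would argue by induction down the root-to-current-leaf chain, using the recursion \eqref{eq:hierarchy-recursion} and the flat-seam property of every gate.

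\emph{Nodes off the visible chain.} Such a node is either retired, hence fully disclosed by the retirement rules, or it is an unretired later sibling of a chain node. In the second case, the stick-breaking weight multiplying its subtree contains the factor $1-\pi$ or $\prod_{j<i}\pi_j$ of an earlier, unretired sibling.

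\emph{Earlier siblings and centers.} At a visible height-$q\ge2$ node, no repair rule~(1) fired. So every undisclosed center satisfies $\norm{U_i^{(q)}-c_{v,i}^{(q)}}>3w_q$. By \eqref{eq:loc-port} and the support statement after \eqref{eq:loc-V}, on that open region:
\begin{itemize}
\item $\pi_{v,i}$ is identically zero near the point;
\item $\psi_{v,i}$ vanishes near the point, so $V_{v,i}=P_{R_q}$ is public;
\item $B_{v,i}=P_{R_q}-G_c+512R_q$ is multiplied by $\omega_{v,i}o_{vi}$.
\end{itemize}
Since $\pi_{v,i}\equiv0$ locally, all later weights $\omega_{v,j}$, $j>i$, and $\omega_{v,\infty}$ vanish together with their full jets through order two. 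The only surviving term for that child is $\omega_{v,i}=\prod_{j<i}\pi_{v,j}$, and its jet uses only disclosed earlier centers.

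\emph{The coefficient $B_{v,i}$ itself.} This is where an undisclosed center could still enter, through $G_c$. I expect to handle it as follows. The child index $i$ at which $\pi$ first fails to be one is the child on the chain. Its center is undisclosed only while that child is unretired, but the coupling term carries the factor $o_{vi}$, the child's last port. By the paper's rule that a last-child latch is never disclosed out of order, together with the induction, $o_{vi}$ has zero jet near the point unless the child has completed, and completion discloses the parent data for that child. This coupling step, and the matching one at height one with memory registers, is the main obstacle. There I would use \eqref{eq:memory-repair-test}: a failed test gives $|u|\le1/8<1/4$, so $\chi$ and $\varpi$ are flat zero, all of whose derivatives vanish.

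\emph{Height one and the current leaf.} At height one, failed tests make all undisclosed latch gates locally zero. Inside the current leaf, failed inner-direction tests make the gates of unsupplied tasks flat. For unsupplied marks, the condition $|a(w)^TY(y)|\le\gamma$ ensures $c_{a}(y)$ sits on its zero plateau: with $\gamma=\alpha/(6B_0)$, we get $u\le\gamma\cdot\tfrac32B_0/\alpha=1/4$. Hence those marks' $\mathscr B_s$ terms reduce to $U_{b_s}$ weighted by flat-zero factors. The current mark's readout is undisclosed, but its gate $c_{a}$ is likewise flat zero unless a hit occurs, and a hit triggers retirement and disclosure before assembly. The $b_s=H_0w_s$ term requires only the supplied mark.

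\emph{Assembly and counting.} The remaining undisclosed ingredient is $A_\nu y$ for the current leaf. By \Cref{lem:leaf}(iv), one product suffices for value and gradient, and for a rich reply the full current matrix suffices for the Hessian. Charged repairs only add disclosed matrices. A normal rich query discloses only the current leaf's matrix, while sibling leaves appear with flat-zero weights, so no sibling matrix enters.
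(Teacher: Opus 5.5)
Your proposal is correct and follows essentially the same route as the paper. You analyze the post-repair state down the visible chain, use the failed repair tests to make every still-hidden self term public and every still-hidden port flat (which kills all later stick-breaking weights), treat the noncompact $B_{v,i}$ separately by pairing it with the flat child port until the child's retirement discloses its center, and handle the current mark, timeouts, and rich queries by disclosure before assembly. One harmless slip: the chain child is the first \emph{unretired} child, not the first child whose port fails to equal one, since earlier retired children can have arbitrary but disclosed ports; your argument only uses that those earlier children are disclosed, so nothing breaks.
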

\begin{proof}
After the repair loop, every still-hidden localized self perturbation at a
visible node vanishes on a neighborhood of the point, and its port has zero
complete second jet.  At the first unfinished child, that hidden port
annihilates every later sibling weight.  Earlier and prematurely retired
children are fully known.  Recursing reaches one current leaf.  The same
readout tests make every still-hidden height-one and inner self/port jet
zero; future numerical gates are flat.  The only unknown ordinary numerical
term is the current matrix product.

The noncompact incoming coefficient $B_{v,i}$ requires a separate check.
While it is hidden, its child output port has zero complete second jet; if
that port becomes nonflat, a descendant repair applies before the reply.
When the child retires, the center defining $B_{v,i}$ is disclosed in the
same reply.  Hence the true second jet of $B_{v,i}o_{vi}$ is always either
zero without knowing $B_{v,i}$ or exactly computable from disclosed data.

For the currently supplied numerical mark, the comparison oracle first tests its
hidden hit predicate.  If the test fires, or if the ordinary timeout is reached,
its readout is disclosed before the reply is assembled; otherwise the gate is on
its flat-zero plateau at the selected point.  A rich operation discloses the
whole current leaf before evaluating its full jet.

At a normal completion, every newly disclosed block was tested before the
normal operation at the already selected point.  Its port remains flat zero
at that point, so the next leaf does not enter the same reply.  If repairs
opened additional branches, they were completely processed before the one
normal operation and their retired matrices were charged to $Z$.
\end{proof}

\subsection{Local visibility windows and stopped tests}
\begin{lemma}[Visibility-window cap]\label[lemma]{lem:window}
A node containing $8^q$ leaves is visible at at most
$C8^qM\ell$ selected points before retirement, including a virtual final
output.  A leaf is visible at at most $CM\ell$ points.  The same deterministic
caps hold if one specified repair test is suppressed.
\end{lemma}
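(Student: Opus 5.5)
We prove the leaf cap first and then induct on height. The argument is pure bookkeeping of the retirement rules in the repair loop. Randomness plays no role: every cap is deterministic and holds on every execution.

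\emph{Leaf.} A leaf is visible exactly while it is the current leaf, i.e. the first unretired leaf on the root-to-current chain. While it is current, each selected point either triggers a repair that retires it (at most one such point), or ends with one normal operation. A normal operation is an ordinary query or a rich query, and a rich query retires the leaf (again at most one such point). An ordinary query advances the slot counter of the currently supplied mark. A mark retires at its first hit or after $\ell$ ordinary slots since its supply, so each of the $M$ marks occupies at most $\ell$ ordinary selected points. A point whose repair loop disposes of the leaf ends its visibility; any other point performs exactly one normal operation on it. Hence the leaf is current at at most $M\ell+2$ selected points, plus one virtual final output, which gives $CM\ell$. Suppressing one specified repair test only removes a way to retire early. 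Every remaining point still carries a normal operation, so the same count holds.

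\emph{Induction.} Let $N_q$ be the cap for a height-$q$ node. The node is visible only while it lies on the root-to-current-leaf chain. At every such point the current leaf is one of its descendants, which is then visible at that point. Each selected point has exactly one current leaf after the loop (or none, in which case the run is finished). Therefore the visible points of the node are partitioned according to which of its $8^q$ descendant leaves is current. Leaves that retire through a charged repair at a point where they were never current contribute nothing. Summing the leaf cap gives $8^q(CM\ell)$. Points at which repairs open several leaves in sequence are counted once, under the final current leaf, and an extra point appears only through the single virtual output, which is absorbed into the constant.

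For this to be legitimate, visibility must never return to a retired leaf. This holds because retirement is permanent and the current leaf is the lexicographically first unretired one, so each leaf is current during one contiguous window. No recursion on $N_q$ is therefore needed; the bound $C8^qM\ell$ follows directly with the same constant $C$ as the leaf cap.

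The main obstacle is making sure that selected points at which the repair loop retires the current leaf, or all leaves of the node, are still counted. At such a point the node may be visible at the start of the loop and have no current descendant after it. I would handle this by charging such a point to the leaf that was current when the point was selected; this adds at most one point per leaf, hence at most $8^q$ extra, which is absorbed. Suppressing a specified test can only lengthen a window through fewer early retirements. The per-leaf window argument never used repair retirements to bound its length, only to end it, so the suppressed version obeys the same caps.
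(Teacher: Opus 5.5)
Your proposal is correct and follows essentially the same bookkeeping as the paper. The paper counts directly over the node's visible interval: at most $8^qM$ tasks, each with timeout $\ell$; one rich retirement per leaf; repairs only shortening the interval; boundary points absorbed into $C$. Your per-leaf count, summed over the $8^q$ descendant leaves using the monotone lexicographic frontier, is the same argument organized leaf by leaf.
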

\begin{proof}
The frontier is the first unretired leaf and never moves backward.  During a
node's visible interval, each normal ordinary slot belongs to one of its
$8^qM$ marked tasks, each with timeout $\ell$; each normal rich query retires
one of its leaves.  Repairs only shorten the interval.  The finitely many
boundary points created within a reply are absorbed into the same bound.
Suppressing one repair does not affect the normal timeouts.
\end{proof}

\begin{lemma}[Stopped fresh-parameter tests]\label[lemma]{lem:stopped-local}
Fix one hidden localized center, one old or inner memory direction, or one future
numerical mark; in the last case also condition on its matrix.  Its probability
of causing a repair is at most its deterministic visibility-window cap times
the corresponding fixed-point hit probability.  This estimate is
unconditional in previous misses.
\end{lemma}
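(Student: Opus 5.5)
The plan is a union bound over the points at which the fixed hidden parameter is \emph{exposed to testing}, with the key point that each test's point is chosen before the parameter can influence anything. Fix the parameter $\xi$: a localized center $c_{v,i}^{(q)}$, a memory direction $v$, or a future mark $w$ together with its leaf matrix. Let $\mathcal G$ be the sigma-field generated by all other presampled input data, the private seed, and, in the mark case, the matrix. Conditional on $\mathcal G$, $\xi$ keeps its original law: uniform on $S^{p_q-1}$, on $S^{p-1}$, or on the unit sphere of $\range P$. For the mark this is the independence of marks from the matrix used in Lemma~\ref{lem:unmarked}. I will construct a \emph{suppressed} comparison run, identical to the true run except that the one repair test involving $\xi$ is never evaluated.

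First, the suppressed run is $\mathcal G$-measurable up to the first time the suppressed test would fire. Before $\xi$ is disclosed, every reply depends on $\xi$ only through flat gates. By the proof of Lemma~\ref{lem:true-jet}, a hidden localized self term and its port have zero second jet off $\overline B(c,3w)$. A hidden register has flat jets unless \eqref{eq:memory-repair-test} holds. A not-yet-supplied mark enters nowhere until it is supplied. So on the event that the test never fires, the suppressed and true runs coincide, and the sequence of selected points $x_1,x_2,\dots$ at which $\xi$ is tested is $\mathcal G$-measurable. Here the tests occur at points where the owning node is visible, or the leaf is current, and the mark is unsupplied.

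Second, the causing-repair event is contained in $\bigcup_{t\le N}\{\xi\text{ hits at }x_t\}$. Here $x_t$ is the $t$-th testing point of the suppressed run, and $N$ is the deterministic cap of Lemma~\ref{lem:window}, which explicitly holds with one test suppressed. This uses the fact that the first hit in the true run is the first hit in the suppressed run. I will pad to exactly $N$ points by repeating the last one, which introduces no new randomness. Conditional on $\mathcal G$, each $x_t$ is fixed, so the per-point hit probability is the fixed-point bound. For a center this is $\theta_q(p_q)$ from Lemma~\ref{lem:small-ball}. For a memory direction it is \eqref{eq:sphere} with $\norm{\mathcal R(U/r)}\le12$. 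For a mark it is Lemma~\ref{lem:unmarked} applied to $Y(y)$ from \eqref{eq:bounded-test}. Summing over $t\le N$ and integrating over $\mathcal G$ gives the claim. The bound does not condition on previous misses, because the union bound is taken over the suppressed run's points rather than via a sequential conditional argument.

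The main obstacle is justifying the first step rigorously: that suppressing the test changes nothing before its first firing. This requires that the hidden parameter is not leaked through other disclosures, for example through parent data disclosed when a child retires, or through a last-child latch. My approach is to check each repair rule and disclosure rule of the repair loop and verify that $\xi$ is disclosed only upon retirement of its owning subtree, at which point it is no longer tested. Measurability of the stopping points in the presence of the internal-computation cap will be handled exactly as in the proof of Lemma~\ref{lem:fork}.
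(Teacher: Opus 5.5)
\paragraph{Gap: the suppressed run's later points depend on $\xi$.}
Your skeleton is the paper's: suppress the one test, use agreement before the first hit, and union-bound over the deterministic window of \Cref{lem:window} with the fixed-point tails. But the measurability step fails for the shadow you actually define. Your suppressed run returns the \emph{literal} jets of the true scalar. After the first point at which the suppressed test would have fired, its replies can therefore genuinely depend on $\xi$. For a center, for instance, $\psi_c\neq0$ throughout the open ball $B(c,3w)$, so replies there can carry information about $c$. From then on the policy may steer later points toward $\xi$. Hence, for $t$ beyond the first would-be hit, $x_t$ is \emph{not} fixed given $\mathcal G$, and $\PP\{\xi\text{ hits at }x_t\}$ can be close to one. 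Summing fixed-point probabilities over all $N$ points of that run is therefore unjustified. You established $\mathcal G$-measurability only ``up to the first time the suppressed test would fire,'' or ``on the event that the test never fires.'' That event itself depends on $\xi$, yet you then used the conclusion for every $t\le N$.

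\paragraph{Missing device: censor $\xi$ out of the replies.}
The fix is to remove $\xi$ from the replies themselves, not merely from the repair rule. In the shadow, substitute as functions before differentiating:
\begin{itemize}
\item for a center, $(V,\pi,B)\mapsto(P_R,0,0)$, including every occurrence of the noncompact incoming coefficient;
\item for a memory direction, $\ip{v}{\mathcal R(U/r)}\mapsto0$, keeping the public quadratic;
\item for a future mark, omit the full weighted forcing $\omega_s\mathscr B_s$.
\end{itemize}
These counterfactual replies need not come from a legal scalar, but the whole shadow history becomes a function of $\mathcal G$ alone. Stop its list of test points before any normal operation or other repair would disclose $\xi$, and fill unused slots with zeros. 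The stopped list is then $\mathcal G$-measurable and obeys \Cref{lem:window}. The flat-jet facts you cite show that real and shadow replies agree until the first hit or disclosure. So every real repair caused by $\xi$ is a hit on this list, and the union bound applies. Equivalently, you could decompose by first-hit time and use the censored points as $\mathcal G$-measurable surrogates on $\{\text{no earlier hit}\}$, but that still requires the censored process to define them.

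\paragraph{Smaller point on future marks.}
A not-yet-supplied mark does enter \eqref{eq:inner}, through the ungated term $U_{b_s}$. Only the flat-zero weight $\omega_s$, inherited from the current task's flat port, kills its jet. This is why the whole weighted term, and not just $c_{a_s}$, has to be censored.
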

\begin{proof}
Condition on all other raw variables and the policy seed. Run a shadow
comparison process with the selected parameter's repair test suppressed.
In addition, eliminate that parameter from every reply by the following
substitutions, made as functions before taking any derivative.
\begin{enumerate}[label=\textup{(\roman*)},leftmargin=*]
\item For a localized center, replace its objects by
$(V,\pi,B)=(P_R,0,0)$, including every occurrence of its incoming
coefficient $B$.
\item For an old or inner memory direction $v$, replace its readout
$U\mapsto\langle v,\mathcal R(U/r)\rangle$ by the identically zero
function everywhere in its block formulas, while retaining the public
quadratic. Its derivative contributions are therefore zero as well.
\item For a future mark $w_s$, replace $U_{b_s}$ and $c_{a_s}$ by zero
until the mark is supplied; equivalently, omit its full weighted forcing
term $\omega_s\mathscr B_s$ from \eqref{eq:inner}.
\end{enumerate}
The shadow uses these substituted jets and the resulting history for all
later choices. It does not observe the suppressed test's outcomes.
These are counterfactual replies; they need not arise from a legal scalar.

Before visibility, the selected parameter has no effect on a reply.
Before its first hit during visibility, a localized self perturbation and
port have zero second jet; the possibly nonzero incoming coefficient
multiplies a child port with zero second jet. An undisclosed old or inner
direction belongs to a block whose directions are all still hidden. At a
reply preceding any repair or disclosure of that block, their self and
port gates are flat zero; its incoming coefficient multiplies a flat-zero
child or numerical gate. For a future mark, a preceding unfinished task
has a flat-zero inner port, so the mark's weighted forcing has zero second
jet even though its unweighted forcing need not vanish.
Thus the real and censored replies agree until the first selected-parameter
hit or prescribed disclosure. Other repairs use only the fixed other
variables and the censored history. Normal disclosure is triggered by
completion of the associated child or task, which uses no undisclosed
completion parameter. In the future-mark case, supply is determined by
the preceding tasks.

Record the selected points at which the suppressed repair test would be
performed, without feeding any outcome back to the shadow. Stop this list
whenever a normal operation or another repair would disclose the selected
parameter, before returning that disclosure or using its raw value; then
fill unused slots with zero. This includes full-data disclosures at leaf
or subtree retirement, and, for a future mark, supply of the mark itself.
The stopped list and its length are functions only of the other raw
variables and the policy seed, and obey \Cref{lem:window}. Evaluate the
selected parameter's hit predicates on this independent list only for the
analysis. Agreement before the first hit implies that every real repair
caused by that parameter appears in the list. A union bound proves the
claim without conditioning on previous misses.
\end{proof}

\section{Repair Cost and Numerical Task Lifetime}\label{sec:repair-cost}
Let $Z$ count actual marked tasks retired by repairs, each task once.
Potential overlapping repairs may be overcounted when taking expectations.

\begin{theorem}[Repair reliability independent of the number of leaves]
\label{thm:repair-cost}
Choose the absolute geometric $K$ sufficiently large, then choose $C_p$ in
\eqref{eq:allocation} and $K_A$ in \eqref{eq:matrix-rank-choice}
sufficiently large.  For every $n=8^L$ and every capped adaptive comparison
policy,
\begin{equation}\label{eq:repair-expectation}
 \E Z\le nM/256.
\end{equation}
The choices do not depend on $n$ or $L$.
\end{theorem}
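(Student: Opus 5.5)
The bound on $\E Z$ comes from a union bound over hidden parameters, using \Cref{lem:stopped-local} for each one. A repair is always caused by one specific hidden parameter: a localized center at height $q\ge2$, an old or inner memory direction, or a future numerical mark. A repair caused by a parameter attached to a node retires at most that node's tasks; at height $q\ge2$ with child index $i$, this is at most $8^qM$ tasks. So
\[
\E Z\le\sum_{\text{params }\xi}(\text{tasks charged if }\xi\text{ fires})\cdot\PP\{\xi\text{ causes a repair}\}.
\]
\Cref{lem:stopped-local} bounds each probability by the visibility-window cap of \Cref{lem:window} times the fixed-point hit probability, with no conditioning on earlier misses. Overcounting overlapping repairs only helps. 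The sum then splits by level.

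\emph{Heights $q\ge2$.} There are $n/8^q$ nodes, each with eight centers. The window is at most $C8^qM\ell$ points, the hit probability is $\theta_q(p_q)$ by \Cref{lem:small-ball}, and the charge is at most $8^qM$. The level-$q$ contribution is therefore at most $Cn\,8^q M^2\ell\,\theta_q(p_q)$. For $q\ge j_*$, \eqref{eq:fixed-tail} gives a total of at most $CnM^2\ell(6/K)^7(2+M)^{-4}$. Since $\ell\le C\sqrt M$, this is $\le nM/1024$ once $K$ is large. For $2\le q<j_*$, use $\theta_q(p_{\rm low})\le2(6/K)^{p_{\rm low}-1}$. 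With $K\ge 384$ so that $6/K\le1/64<1/8$, each term is at most
\[
CnM^2\ell\,8^{j_*}(6/K)^{p_{\rm low}-1}\le CnM^{3.5}(2+M)^3(1/64)^{C_p\log(2+M)-1}.
\]
Choosing $C_p$ large makes the sum over the $O(\log M)$ such levels at most $nM/1024$. Note the window grows like $8^q$, so the true task charge at node scale must be used rather than $n$, and the $8^q$ growth must be beaten by the radius ratio $K2^{q-2}$ raised to the power $p_q-1$. That is exactly why $p_q$ is needed in the first $j_*$ levels.

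\emph{Height one and inner registers.} A memory direction is uniform on $S^{p_{\rm low}-1}$. \Cref{lem:sphere} (\eqref{eq:sphere} with $\norm{\mathcal R(U/r)}\le12$) gives fixed-point hit probability $2e^{-cp_{\rm low}}$ for \eqref{eq:memory-repair-test}. A height-one node has $8n$ leaves' worth of charge $8M$, window $C8M\ell$, and $O(\log M)$ directions per block. Inner directions of a leaf are similar, with charge $\le M$, window $CM\ell$, and $M k_\bullet$ directions. The total is at most $CnM^3\ell\log(2+M)e^{-cC_p\log(2+M)}$, which is $\le nM/1024$ for large $C_p$.

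\emph{Future marks.} Condition on the leaf matrix, which lies in $\mathcal E$. For a mark not yet supplied, \Cref{lem:unmarked} with $y=Y(y)$ gives hit probability $2e^{-c\gamma^2 r_A}$, with window $CM\ell$ and charge $\le M$. Over $nM$ marks the total is $CnM^3\ell e^{-c\gamma^2K_AM}\le nM/1024$ once $K_A$ is large.

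Summing the four pieces gives \eqref{eq:repair-expectation}. The constants are chosen in the order $K$, then $C_p$, then $K_A$, and none depends on $n$ or $L$.

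The step I expect to be most delicate is the bookkeeping for the premature last-child latch and for repairs that cascade within a single point. One must confirm that each actual retired task is attributed to some parameter whose stopped test fired. This follows because the repair loop attributes each repair to the first firing test under the fixed priority order. Overlaps are then absorbed by overcounting.
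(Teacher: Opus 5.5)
Your proposal is correct and follows essentially the same route as the paper: a union bound over hidden parameters via \Cref{lem:stopped-local}, the window caps of \Cref{lem:window}, and the fixed-point tails from \Cref{lem:small-ball}, \Cref{lem:sphere}, and \Cref{lem:unmarked}, with the outer levels split at $j_*$ and \eqref{eq:fixed-tail} used above it. The only difference is cosmetic: for $2\le q<j_*$ you bound $8^q\le 8^{j_*}\le C(2+M)^3$ and multiply by the $O(\log M)$ number of levels, whereas the paper sums the geometric series to $256(6/K)^{p_{\rm low}-1}$; both work, because $p_{\rm low}\ge C_p\log(2+M)$ absorbs the extra polynomial factor.
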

\begin{proof}
At height $q\ge2$, Lemmas~\ref{lem:small-ball}, \ref{lem:window}, and
\ref{lem:stopped-local} give
\begin{equation}\label{eq:outer-repair-charge}
 \frac{\E Z_{\ge2}}{nM}
 \le C M\ell\sum_{q=2}^L8^q
     2\left(\frac6{K2^{q-2}}\right)^{p_q-1}.
\end{equation}
Indeed, there are $n/8^q$ nodes and eight centers per node; each center has
at most $C8^qM\ell$ tests and one hit retires at most $8^qM$ tasks.

For $2\le q<j_*$, extend to the infinite series with
$p_q=p_{\rm low}\ge8$:
\[
 \sum_{q=2}^{j_*-1}8^q\theta_q(p_{\rm low})
 \le256(6/K)^{p_{\rm low}-1}.
\]
Choose $C_p$ large enough that its contribution to
\eqref{eq:outer-repair-charge} is at most $1/2048$, uniformly in $M$.
For $q\ge j_*$, use \eqref{eq:fixed-tail}; since
$M\ell=O(M^{3/2})$, a sufficiently large fixed $K$ makes this contribution
at most $1/2048$ for every admissible $M$.

At height one, each node has $8k_1$ old directions, a window $O(M\ell)$,
and repair charge at most $8M$.  The normalized expected cost is at most
$Ck_1M\ell e^{-cp_{\rm low}}$.  Inner leaf directions contribute at most
$CM^2\ell e^{-cp_{\rm low}}$.  The $M$ unprovided marks in each leaf
contribute at most $CM^2\ell e^{-cr_A}$.  Lemma~\ref{lem:sphere} and
Lemma~\ref{lem:unmarked} justify these fixed-point tails.  Increasing
$C_p$ and $K_A$ makes each term at most $1/2048$.  The five contributions
sum to less than $1/256$.
\end{proof}

\begin{lemma}[The ordinary marked lifetime is unchanged]\label[lemma]{lem:normal-life}
Let $E_{\rm early}$ count normal ordinary marked-task retirements strictly
before their timeout which were not assigned to repairs.  Then
\begin{equation}\label{eq:early-expectation}
 \E E_{\rm early}\le nM/256.
\end{equation}
\end{lemma}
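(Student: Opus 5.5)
The plan is to bound $\E E_{\rm early}$ task by task, using \Cref{cor:lifetime} for each marked task and summing over the $nM$ tasks. Fix a leaf $\nu$ and a mark index $s$. An early normal retirement of task $(\nu,s)$ means three things: its mark was supplied, it was not retired by a repair, and some ordinary test $|a(w_s)^TY(y)|>\gamma$ fired within its first $\ell$ ordinary slots. By linearity it suffices to show that each such event has probability at most $1/256$. Conditioning on the leaf matrix law makes this exactly the fork setting of \Cref{lem:fork}.

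To reach that setting, build from the comparison process a preprocessor together with a post-mark policy acting on products with $A_\nu$ alone. Everything outside leaf $\nu$ is independent of $A_\nu$, so all other presampled data (other matrices, centers, directions) is treated as independent external randomness. The only matrix-dependent information before the mark $w_s$ is supplied consists of three items:
\begin{itemize}
\item the ordinary products with $A_\nu$ made while earlier tasks of leaf $\nu$ were current, of which there are at most $(s-1)\ell\le M\ell$, plus boundary points;
\item the disclosed readouts $(w_t,a(w_t))$ for $t<s$, so $S\le M$;
\item repair tests on future marks, which are functions of readouts.
\end{itemize}
To avoid free advice from the future-mark tests, I would use the censored shadow of \Cref{lem:stopped-local}: suppress repair tests that involve $w_s$ or later marks, and note that real and shadow histories agree until such a test fires, an event already charged to $Z$. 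A rich query would disclose the whole leaf; this is a stopping time at which the preprocessor elects to stop without opening the mark, which is allowed. The disclosure rules of \Cref{lem:true-jet} guarantee that every reply is computable from one product per ordinary query plus these disclosures. So $q_0\le M\ell+1$ and $S\le M$.

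After the mark is supplied, the policy runs until timeout. It makes at most $\ell$ ordinary queries, hence $q_1\le\ell+1$ products, and each of its tested vectors $Y(y)$ has norm at most one by \eqref{eq:bounded-test}. It receives no hit feedback because a hit terminates the task, and before that point the gate is flat zero, so replies do not depend on $a(w_s)$. Repairs in other branches occurring during the lifetime are again external, except those caused by leaf-internal inner directions. Those are independent of $A_\nu$ given the marks, so they are also external randomness. Since $r=r_A\ge K_0M$ by taking $K_A\ge K_0$, \Cref{cor:lifetime} gives $p_{\rm hit}\le1/256$, which yields $\E E_{\rm early}\le nM/256$.

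The main obstacle is checking that the reduction really produces a policy that satisfies the hypotheses of \Cref{lem:fork}, namely that no hidden matrix information leaks. Two routes need care: repairs triggered by later marks, through condition~(3) at preceding points, and the arbitrary-output test counting as a test. For the repair route I would insist that the early-retirement event excludes repair-assigned tasks, and I would run the fork on the stopped shadow, so that any disagreement is already accounted for in $Z$ rather than in $E_{\rm early}$. The output test fits within the $q_1+2$ slots.
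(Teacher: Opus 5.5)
Your proposal follows the paper's proof essentially step for step. It reduces each marked task to \Cref{cor:lifetime} through a shadow run that censors the premature tests of not-yet-supplied marks of the target matrix, synthesizes earlier readouts with charged products, and stops before any disclosure of the target leaf; it then records the post-supply tests, including the unqueried output, without target-hit feedback, and sums the $nM$ indicators. The points to tighten are ones the paper states explicitly: the post-supply phase must also stop before a rich reply or a reproduced repair that would disclose $A_\nu$ or $a(w_s)$; the no-feedback property is cleanest if you replace the target gate $c_{a(w_s)}$ by zero and run to the timeout without executing the hit branch, since the real run coincides with this shadow until the first hit; and earlier marks' readouts should be synthesized at their supply times, because their hit tests occur before their disclosure.
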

\begin{proof}
Fix a target leaf and target mark.  A leaf repaired before becoming current
has no normal retirements.  For an unrepaired current leaf, no product with
its matrix occurred before its visible interval.  Other matrices, localized
centers, and memory directions are independent auxiliary seed data.  Earlier
readouts of the same target matrix are not free: synthesize them with the
charged $O(J)$ products of Lemma~\ref{lem:readout} before using their hit
predicates.

Before the target mark is supplied, run a shadow policy that suppresses
every premature-repair test of a still-unprovided mark from the target
matrix. In its replies, replace both $U_{b_s}$ and $c_{a_s}$ for every
such mark by identically zero functions, including their derivatives.
Thus their full weighted forcing terms are omitted, as in
\Cref{lem:stopped-local}. Previously supplied marks retain their true
forcing and gates: their readouts can be synthesized privately with the
charged products before reproducing their normal hit tests and disclosures.
The shadow does not read the target mark before its supply. Its supply time
is determined by the earlier tasks, so it is fresh at that time. This
preprocessing uses at most $M\ell+1$ products and at most $M$ synthesized
earlier readouts.

After the target mark $w$ is supplied, retain its known forcing $U_{b(w)}$
and the normal matrix products, but replace its gate $c_{a(w)}$ by the
identically zero function. Keep the target task active until its fixed
timeout; do not execute its normal hit-retirement branch or return any
target-hit or target-miss feedback. Continue to censor all unprovided
marks. At each selected ordinary point, record $Y(y)$ before assembling
the substituted reply. A point fixed before supply, including supply
during a repair loop, is recorded without reselection. Stop before a rich
reply or any reproduced repair that would disclose the target matrix or
readout. At the timeout, record the selected point but stop before its
disclosure. Upon termination, also record any final unqueried output as
the allowed output test. Fill unused slots with zero. This gives at most
$q_1=\ell+1$ products and at most $q_1+2$ bounded tests, without the target
readout or any target-hit feedback.

Repairs depending on independent data can be reproduced from the auxiliary
seed. Until the first suppressed future-mark repair or normal target hit,
the omitted terms have zero jet at every reply, so the real and shadow
histories agree. If a suppressed future-mark repair happens first, the
real leaf retires and contributes no later normal target retirement.
Otherwise every real normal early target retirement is recorded as a hit
in the shadow's test list before the corresponding readout is disclosed.
The first-hit event of this no-feedback test list therefore contains every
real normal early target retirement. Corollary~\ref{cor:lifetime} bounds
its probability by $1/256$. Summing the $nM$ indicators proves the claim;
independence between these indicators is unnecessary.
\end{proof}

\section{The Joint Query Budget}\label{sec:ledger-log}
An algorithm promised to terminate only on legal scalar inputs can first be
capped on counterfactual comparison histories by an input-independent finite
computation cutoff.  Monotone convergence under the legal presampled input
law makes the lost average success arbitrarily small; all bounds below are
independent of this cutoff.

\begin{theorem}[Hard mixed-query budget]\label{thm:ledger-log}
If $h\le n/2$ and $G+1\le nM\ell/4$, the success probability of a capped
policy on the presampled root law is at most $1/32$.
\end{theorem}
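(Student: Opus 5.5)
We establish the task ledger \eqref{eq:road-ledger} pathwise on the comparison process and then combine it with Markov's inequality. By \Cref{lem:true-jet}, the repaired comparison oracle returns the literal jet of the presampled root $F$ at every selected point. A capped policy run against $F$ therefore has exactly the same law as the same policy run against the comparison oracle. This lets us analyse success through the comparison process.

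\textbf{Step 1: success forces full retirement.} First we show that a successful output forces every leaf to be retired by the end of the run, counting the virtual final output point at which the repair loop is also applied. If some leaf remains unretired after this loop, the root's first unfinished chain has a hidden port. By \Cref{lem:hierarchy-analytic}(iii), applied down the visible chain together with \Cref{lem:leaf}(iii), a gradient of norm at most one at $x$ would force the current completion ports, and ultimately the current leaf's inner ports, to equal one. But after the repair loop those ports sit on their flat-zero plateaus, which is a contradiction. Hence success implies that all $nM$ marked tasks have been retired.

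\textbf{Step 2: accounting for retirements.} Every retired task is retired in one of four ways:
\begin{enumerate}[label=\textup{(\alph*)},leftmargin=*]
\item by a rich query, which retires at most one leaf, i.e. $M$ tasks, per rich query, giving at most $hM$ in total;
\item by a repair, counted by $Z$;
\item by a normal early hit, counted by $E_{\rm early}$;
\item by a normal timeout, each of which consumes $\ell$ ordinary slots of its own.
\end{enumerate}
For (d), the slots are disjoint across tasks, and the final output contributes at most one extra slot. So there are at most $(G+1)/\ell$ timeouts. This gives \eqref{eq:road-ledger} on the success event. With $h\le n/2$ and $G+1\le nM\ell/4$, success forces
\[
E_{\rm early}+Z\ge nM-nM/2-nM/4=nM/4.
\]

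\textbf{Step 3: conclusion.} \Cref{thm:repair-cost} and \Cref{lem:normal-life} give $\E(E_{\rm early}+Z)\le nM/128$. Markov's inequality then bounds the success probability by $(nM/128)/(nM/4)=1/32$.

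\textbf{Main obstacle.} The delicate step is Step 1, the claim that success implies complete retirement at the output point. It requires applying the repair loop at a virtual output without charging a query. It also requires checking that the activation lemmas apply to the root even though lower hidden blocks are present. The plan is to handle this by an induction down the visible chain, using flatness of hidden ports. Separately, we must verify that the counterfactual cap does not break the equality of laws; the cutoff remark at the start of \Cref{sec:ledger-log} is meant to absorb this.
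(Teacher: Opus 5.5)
Your proposal is correct and follows the paper's proof essentially step for step: a virtual output point, success forcing full retirement, the four-way ledger \eqref{eq:road-ledger} with at most $(G+1)/\ell$ timeouts, and Markov's inequality with $\E(E_{\rm early}+Z)\le nM/128$. One simplification: Step~1 needs no descent down the chain to the leaf's inner ports, because \Cref{lem:hierarchy-analytic}(iii) applied at the root already forces all eight root ports to one, whereas any unretired root child still has an undisclosed center whose port is flat zero after the repair loop.
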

\begin{proof}
Process an unqueried output as one virtual ordinary selected point, without
allowing the policy to revise it.  If its true gradient has norm at most one,
Lemma~\ref{lem:hierarchy-analytic} forces every root port to one.  After the
repair loop, an unretired root child would still have an undisclosed parent
completion center and hence a flat-zero port at this point; if the port were
nonzero, its support test would have repaired the child.  Thus all $n$ leaves
and all $nM$ marked tasks have retired.

Normal rich retirements account for at most $hM$ tasks.  Every normal
ordinary retirement that is not early uses a disjoint window of $\ell$
ordinary slots.  All remaining tasks are counted in $E_{\rm early}$ or $Z$.
Consequently success implies the ledger \eqref{eq:road-ledger}.  Under the
query caps, $E_{\rm early}+Z\ge nM/4$.  Equations
\eqref{eq:repair-expectation} and \eqref{eq:early-expectation}, followed by
Markov's inequality, give
\[
 \PP\{\text{success}\}\le
 \frac{\E(E_{\rm early}+Z)}{nM/4}\le\frac1{32}.
\]
The comparison oracle returns the exact jet, so there is no additional
global flatness-failure event.
\end{proof}

\section{Dimension, Normalization, and the Main Theorem}
\label{sec:finish-log}
\begin{proof}[Proof of Theorem~\ref{thm:parameter}]
With $r_A=K_AM$, the numerical dimension in Section~\ref{sec:matrix} is
$D=\kappa r_AJ=O(M^{3/2})$.  There are $Mk_\bullet$ inner registers,
$8k_1$ height-one registers, and eight single-vector registers at every
higher height.  Hence
\begin{equation}\label{eq:physical-dimension-log}
 d_{\rm phys}=D+Mk_\bullet p_{\rm low}+8k_1p_{\rm low}
                         +8\sum_{q=2}^Lp_q.
\end{equation}
The level sum equals
\[
 p_{\rm low}\bigl(\min\{L,j_*-1\}-1\bigr)_+
 +8(L-j_*+1)_+.
\]
Since $p_{\rm low}=O(\log(2+M))$, $k_1=O(\log(2+M))$, and
$j_*=O(\log(2+M))$,
\[
 d_{\rm phys}\le C\{M^{3/2}+M\log(2+M)+\log^2(2+M)+L\}
 \le C_d\{M^{3/2}+\log(2+n)\}.
\]
No growing function of $n$ has been absorbed into a constant.

Lemma~\ref{lem:hierarchy-analytic} and
Theorem~\ref{thm:regularity-log} give gap $O(nM)$ and an absolute Hessian
Lipschitz constant $C_H$.  Fix
$b\ge\max\{1,\sqrt{C_H}\}$ and set $f(x)=bF(x/b)$.  Then
$\nabla f(x)=\nabla F(x/b)$, both query types transform one for one,
$\Lip(\nabla^2f)\le C_H/b^2\le1$, and the gap grows only by the absolute
factor $b$.  Pad unused coordinates with a public positive quadratic.
Theorem~\ref{thm:ledger-log} and $\ell\ge c\sqrt M$ exclude ordinary caps
below $c_0nM^{3/2}$, after absorbing the virtual output's one slot.
\end{proof}

\begin{proof}[Proof of Theorem~\ref{thm:main}]
Put $H=h+1$, $\mathcal L=\log(2+T)$, and
$K_T=\min\{d^{2/3},T/H\}$.  Choose a sufficiently small absolute
$\zeta>0$ and let $M=1+4096J^2$ be the largest admissible size not exceeding
$\zeta K_T$.  For sufficiently small $a$ and sufficiently large $T_*$,
this set is nonempty; adjacent admissible values have bounded ratio, so
\begin{equation}\label{eq:M-rounding}
 \zeta K_T/4\le M\le\zeta K_T.
\end{equation}
Choose $n=8^L$ as the largest power of eight not exceeding
$T/(C_EM)$.  If $\zeta\le1/(64C_E)$, then
\[
 \frac{T}{8C_EM}<n\le\frac{T}{C_EM},\qquad n>8H\ge2h,
 \qquad n\ge8.
\]
Thus the gap and rich-budget premises of Theorem~\ref{thm:parameter} hold.
Choose $\zeta$ so that $C_d\zeta^{3/2}\le1/4$.  The numerical dimension
term is then at most $d/4$.  Also $n\le T$, hence
$C_d\log(2+n)\le C_d\mathcal L\le d/4$ after choosing the absolute constant
in $d\ge C\mathcal L$ large enough.  The parameter theorem gives
\[
 Q_g^*(h;d,T)\ge c nM^{3/2}
 \ge c'T\sqrt M
 \ge c''\min\left\{Td^{1/3},\frac{T^{3/2}}{\sqrt{h+1}}\right\},
\]
using \eqref{eq:M-rounding}.  If an algorithm succeeded with probability
at least $2/3$ on every fixed legal input, its average success over the
presampled hard distribution would be at least $2/3$, contradicting
Theorem~\ref{thm:ledger-log} after the arbitrarily small computation-cutoff
loss.  This proves \eqref{eq:main}.
\end{proof}

\section{Discussion}\label{sec:scope}
The lower bound matches the gradient--Hessian upper tradeoff up to absolute
constants when $d\ge C\log(2+T)$ and $1\le h$, $h+1\le aT$.
The logarithmic threshold follows from the physical dimension estimate
$O(M^{3/2}+\log(2+n))$, while the initial gap is $O(nM)$.
Localizing the completion perturbations permits a fixed register dimension
at sufficiently high levels. The comparison argument charges accidental
discoveries by the number of tasks they retire; it does not require all
hidden parameters to remain undiscovered throughout the run.

\paragraph{Dimension and budget regimes.}
The theorem does not settle the tradeoff in dimension fixed independently
of $T$. At fixed $M$, this construction still uses eight separate vectors
at each of $\Theta(\log n)$ heights. \Cref{prop:fold} identifies an
obstruction to directly sharing the coordinates of two adjacent levels;
it is not a lower bound on the dimension required by every possible
construction. The parameter theorem allows $h=0$, whereas the main matching
statement is restricted to $h\ge1$. The transition to larger Hessian
budgets beyond $h+1\le aT$ is not resolved here.

\paragraph{Oracle conventions.}
Both query budgets are hard caps on every execution. The lower bound
allows measurable randomized adaptive queries and an arbitrary unqueried
output, with no precision, radius, span, or Hessian-norm restriction.
Expected query budgets constitute a different convention. The argument
uses a fixed input distribution independent of the algorithm's private
seed, and each comparison reply is the literal jet of that input.

\paragraph{Unnormalized units.}
For an objective $g$ with initial gap at most $\Delta>0$, Hessian
Lipschitz constant at most $L_2>0$, and target gradient norm $\eps>0$,
let
\[
 \alpha=\sqrt{\frac{\eps}{L_2}},\qquad
 f(x)=\frac{g(x_0+\alpha x)-g(x_0)}{\eps\alpha},
 \qquad T=\frac{\Delta\sqrt{L_2}}{\eps^{3/2}}.
\]
Then $\nabla f(x)=\nabla g(x_0+\alpha x)/\eps$ and
$\Lip(\nabla^2f)\le1$. Both types of queries transform one for one.
Consequently, in the corresponding range of $d,T,h$, the necessary
ordinary budget is
\[
 \Omega\!\left(\min\left\{
 \frac{\Delta\sqrt{L_2}\,d^{1/3}}{\eps^{3/2}},
 \frac{\Delta^{3/2}L_2^{3/4}}{\eps^{9/4}\sqrt{h+1}}
 \right\}\right).
\]

\clearpage
\appendix
\section{Translating the NALEN Upper Bound}\label{app:upper-translation}

For completeness, we justify the two adaptations of
Chen et~al.~\cite{CLLZ26} used in \Cref{tab:comparison}. This also
separates their weighted cost parameter $\bar d$ from ambient dimension
and accounts for the strict Hessian cap when $h=1$.

\begin{proposition}\label[proposition]{prop:upper-translation}
On the normalized class of \Cref{sec:model}, with $T\ge1$, deterministic
algorithms achieve the following worst-case query bounds:
\begin{enumerate}[label=(\roman*),leftmargin=2em]
\item For every integer $1\le h\le T$,
      $H\le h$ and $G=O(T^{3/2}/\sqrt h)$.
\item With no rich queries, $G=O(d+Td^{1/3})$.
\end{enumerate}
The constants are absolute and require no Hessian-norm bound.
\end{proposition}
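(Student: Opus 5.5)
The plan is to cite NALEN~\cite[Theorem~4.1]{CLLZ26} as a black box in our normalization: with Hessian refresh period $m\ge1$, it uses $O(m+Tm^{1/3})$ iterations, each with a constant number of gradient calls and a fresh Hessian once per period. Its analysis is by cubic-regularized steps with lazy Hessians and needs only the Hessian-Lipschitz bound, so no Hessian-norm assumption enters. Two bookkeeping points must be checked first. A rich query returns value, gradient and Hessian together, so each Hessian refresh is one rich query and the gradient at that point costs no ordinary query. We also need the final output test: NALEN outputs a point whose gradient is certified small, and if the certificate needs one more gradient evaluation, it adds only $O(1)$ ordinary queries.

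For (i), choose $m=\lceil T/h\rceil$ refresh period. The number of periods is then $O(1+T/m+m^{1/3}T/m)$, and we must show this is at most $h$ rich queries. Since $T/m\le h$ and $m\ge1$, the raw count is $O(h)$, but the constant factor is a problem because $h$ is a hard cap. To fix this, take $m=\lceil C_1T/h\rceil$ for a large absolute $C_1$. Using $m\le 2C_1T/h$ (from $h\le T$), the iteration count $O(m+Tm^{1/3})$ becomes $O(T/h+T^{4/3}h^{-1/3})$, and the period count is at most $1+C'(1+T m^{-2/3})$. The hard case is $h=1$, where only a single Hessian is allowed. Then I would run with $m$ equal to the whole horizon: one Hessian at $x_0$ and $O(m)$ iterations with $m\asymp T^{3/2}$. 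The fixed-point relation $m\ge C(m+Tm^{1/3})$ fails for a single period, so the $h=1$ case needs separate treatment. Instead I would use the fact that with a stale Hessian at distance $\rho$ the effective model error is $\rho$. Equivalently, apply the $h=2$ argument with constants absorbed: the cap $h$ versus $ch$ differs by a constant in $G$, since $T^{3/2}/\sqrt{ch}\asymp T^{3/2}/\sqrt h$. So it suffices to reach $H\le Ch$ and then rescale $h\mapsto\lfloor h/C\rfloor$ for $h\ge C$. For $1\le h<C$, I would use a separate argument: gradient-only cubic-free methods with one initial Hessian, or directly (ii)-style finite differences to get $O(T^{3/2})$ gradient queries. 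For example, gradient descent with the implicit regularity could suffice, and this step is the main obstacle I expect. The gradient count then follows: $O(m+Tm^{1/3})$ with $m\asymp T/h$ gives $O(T/h+T^{4/3}h^{-1/3})$. This is at most $O(T^{3/2}/\sqrt h)$, because $T^{4/3}h^{-1/3}\le T^{3/2}h^{-1/2}$ iff $h^{1/6}\le T^{1/6}$, and $T/h\le T^{3/2}/\sqrt h$ likewise.

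For (ii), replace each Hessian refresh by finite differences of gradients, where $d+1$ gradient calls at $x$ and $x+\tau e_i$ give columns of the Hessian. To avoid a Hessian-norm bound, note that $\nabla f(x+\tau e_i)-\nabla f(x)=\tau\nabla^2 f(x)e_i+O(\tau^2)$ holds exactly up to $\tau^2/2$ by the Lipschitz Hessian, independently of $\norm{\nabla^2f}$. Taking $\tau$ small absolute, the column errors total $O(\tau\sqrt d)$ in Frobenius norm, and symmetrizing keeps this bound. NALEN tolerates Hessian errors of order one (i.e.\ $\sqrt{L_2\eps}$), as in Adil et al.'s inexact analysis; with $\tau=c/\sqrt d$ the error is $\le c$. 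Choosing $m=d$, the total cost is $O(m+Tm^{1/3})$ iterations plus $(d+1)\cdot O(1+T/m^{2/3})$ differencing queries. This gives $O(d+Td^{1/3})$, matching the per-period ratio $\bar d=d$ of their weighted cost theorem. The remaining check is that the inexact-Hessian version of NALEN's potential argument tolerates a constant-size perturbation, which I would verify by inflating the cubic regularization constant.
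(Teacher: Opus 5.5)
\textbf{There is a genuine gap in part (i), in two places.}

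\emph{The refresh period is mis-balanced.} Over $N\asymp m+Tm^{1/3}$ iterations the number of refreshes is $\lceil N/m\rceil\asymp 1+Tm^{-2/3}$, as you write yourself, not $1+T/m$. With $m\asymp T/h$ this count is $\asymp T^{1/3}h^{2/3}$. That exceeds $h$ by the unbounded factor $(T/h)^{1/3}$, so neither a larger absolute $C_1$ nor the rescaling $h\mapsto\lfloor h/C\rfloor$ can repair it.

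A sanity check exposes the error. Your resulting gradient count $O(T/h+T^{4/3}h^{-1/3})$ is $o(T^{3/2}/\sqrt h)$ as $h/T\to0$. Combined with $H\le h$, it would contradict Theorem~\ref{thm:main} in its regime $d\ge(T/h)^{3/2}$. The correct balance is $Tm^{-2/3}\asymp h$, i.e.\ $m\asymp(T/h)^{3/2}$, which agrees with your own $h=1$ guess $m\asymp T^{3/2}$. Then $m+Tm^{1/3}=O(T^{3/2}/\sqrt h)$.

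\emph{The hard cap for small $h$, in particular $h=1$, is left unresolved.} The fallbacks you suggest do not work. Gradient descent needs a global gradient-Lipschitz constant, which this class does not provide. Finite differences cost $O(d+Td^{1/3})$, which is not $O(T^{3/2})$ once $d\gg T^{3/2}$.

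The obstruction you hit is an artifact of treating the iteration count $O(m+Tm^{1/3})$ as a black box with an unknown constant on its $m$ term. The paper instead opens the proof of \cite[Theorem~4.1]{CLLZ26} and tunes the step radius to the horizon, $D=(T/(Nm^{2/3}))^{1/3}$. This gives
$\norm{\nabla f(\widehat y)}\le C_0(T/(DN)+m^{2/3}D^2)\le C_1(Tm^{1/3}/N)^{2/3}$
for every horizon $N\ge\lceil m^{1/3}\rceil$, with no additive requirement $N\gtrsim m$.

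The paper then makes two choices:
\begin{itemize}
\item It sets $N=hm$ exactly, so the snapshot indices are $0,m,\ldots,(h-1)m$ and exactly $h$ rich queries are made.
\item It takes $m=\lceil A(T/h)^{3/2}\rceil$. The bound becomes $C_1A^{-4/9}\le1$, while $N\le AT^{3/2}/\sqrt h+h=O(T^{3/2}/\sqrt h)$ because $h\le T$.
\end{itemize}
This covers every $h\ge1$ uniformly, with no rescaling and no separate small-$h$ case.

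\textbf{Part (ii)} follows the paper's route: coordinate differences of gradients, symmetrization, and $m=d$. The paper settles the tolerance you leave open as follows. A snapshot error $\delta=mD$ adds to the prediction error a term of the same order as the staleness error already present for a Hessian up to $m$ steps of radius $D$ old, and this is absorbed by decreasing $c_0$. Accordingly, the paper uses difference step $s=2\delta/\sqrt d$. Your absolute-constant error target also fits, because $mD$ is bounded below by an absolute constant when $m=d$, $N\asymp d+Td^{1/3}$ and $T\ge1$. That lower bound on $mD$ still has to be checked.
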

\begin{proof}
Write $m$ for NALEN's integer Hessian refresh period, $N$ for its
iteration horizon, and $q=\lceil m^{1/3}\rceil$ for its averaging-block
length. The notation $q$ avoids a conflict with our normalized gap $T$.
Set its step-radius parameter to
\[
 D=\left(\frac{T}{Nm^{2/3}}\right)^{1/3},
 \qquad \eta=\frac{c_0}{mD},
\]
where $c_0>0$ is a sufficiently small absolute constant. The proof of
Theorem~4.1 in \cite[Appendices~B and D]{CLLZ26}, using the gap upper
bound $T$ in place of the actual gap, gives
\begin{equation}\label{eq:nalen-translation}
 \norm{\nabla f(\widehat y)}
 \le C_0\left(\frac{T}{DN}+m^{2/3}D^2\right)
 \le C_1\left(\frac{Tm^{1/3}}{N}\right)^{2/3}.
\end{equation}
Here $N\ge q$ suffices: the $K=\lfloor N/q\rfloor$ complete averaging
blocks contain $Kq\ge N/2$ iterations, so discarding a final incomplete
block changes only an absolute constant. At most $2N+K+1\le3N+1$
gradient calls suffice, including initialization and evaluation of all
$K$ candidate outputs. The number of distinct snapshot Hessians is
$\lceil N/m\rceil$. Our rich reply can supply the snapshot gradient
as well; ignoring this saving gives a valid upper bound on $G$.

For (i), choose
\[
 m=\left\lceil A(T/h)^{3/2}\right\rceil,\qquad N=hm,
\]
with an absolute constant $A\ge1$ large enough. There are exactly $h$
snapshot indices $0,m,\ldots,(h-1)m$. Since $q\le m\le N$,
\eqref{eq:nalen-translation} applies, and its right-hand side is at
most $C_1A^{-4/9}\le1$. Moreover,
$N\le A T^{3/2}/\sqrt h+h=O(T^{3/2}/\sqrt h)$ because $h\le T$.
This includes $h=1$ and does not infer a strict cap from a hidden
constant in an $O(1+T/m^{2/3})$ Hessian count.

For (ii), we first check that the proof tolerates an approximate
snapshot Hessian $B_j$ with
$\norm{B_j-\nabla^2f(z_j)}\le\delta:=mD$.
In the notation of \cite[Algorithm~1 and Appendix~B]{CLLZ26},
$y_{n+1}-z_n=(\Delta_{n+1}-\Delta_n)/2$.
Replacing the exact snapshot Hessian by $B_{\pi(n)}$ therefore adds
at most
\[
 \frac{\delta}{2}\norm{\Delta_{n+1}-\Delta_n}
\]
to the prediction error. The exact-Hessian bound on that error is
$((m+1)D/2)\norm{\Delta_{n+1}-\Delta_n}$. Thus the same estimate holds
with a larger absolute constant. Decreasing $c_0$ absorbs this change
in the regret estimate, and \eqref{eq:nalen-translation} remains valid.
The update uses a symmetric quadratic model, so the approximate
matrices below preserve the form of the subproblem.

To obtain such a matrix from ordinary queries at a snapshot $z$, let
$e_i$ be the coordinate vectors and form the columns
\[
 A_z e_i=\frac{\nabla f(z+s e_i)-\nabla f(z)}{s},
 \qquad s=\frac{2\delta}{\sqrt d},
 \qquad B_z=\frac{A_z+A_z^{\mathsf T}}2.
\]
The Lipschitz-Hessian Taylor bound gives an error at most $s/2$ in
each column, hence
$\norm{B_z-\nabla^2f(z)}\le\sqrt d\,s/2=\delta$.
This uses $d+1\le2d$ ordinary queries per snapshot, with no rich query.
Finally choose $m=d$ and
$N=\lceil A(d+Td^{1/3})\rceil$. Increasing $A$ if necessary makes
\eqref{eq:nalen-translation} at most one. Including all finite-difference
and output-check queries, the count is at most
\[
 3N+1+2d\lceil N/d\rceil=O(d+Td^{1/3}).
\]
All parameters and query caps are fixed from $T,d,h$ before the run.
If the initial gradient is already small, the algorithm terminates
immediately; otherwise its initialization is well-defined.
\end{proof}

\section{An Obstruction to Adjacent-Level Coordinate Sharing}\label{app:fold}
The theorem above still uses $O(\log n)$ separate physical height blocks.
The next proposition records an exact obstruction to identifying two
adjacent high-level arrays.  It is not an impossibility theorem for every
constant-dimensional encoding.

\begin{proposition}[Adjacent identification creates an uncertified stationary point]
\label[proposition]{prop:fold}
Consider the local objective $F_v^{(q+1)}$ at any node of height
$q+1$, with adjacent high levels $q,q+1\ge j_*$. Identify their corresponding
registers by $U_i^{(q)}=U_i^{(q+1)}=u_i$, $1\le i\le8$, leaving all lower
variables separate.  The restricted local objective has a stationary point at which
all eight parent completion ports are zero, for every choice of the relevant
hidden centers and descendants.
\end{proposition}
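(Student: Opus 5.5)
The plan is to build the uncertified stationary point by hand. All parent-level registers $u_i$ go to a region near the origin, far from every hidden center of radius $R_q$ or $R_{q+1}$. There every localized self perturbation and every completion port at heights $q$ and $q+1$ is flat zero. The remaining lower variables are then fixed by a minimization. After the identification, each $u_i$ carries two baselines: $P_{R_{q+1}}(u_i)$ from the parent level, and $P_{R_q}(u_i)$ from $Q_q$ and from the child objects $V_{vi',i}^{(q)}$ of every child $vi'$. It also carries the parent objects centered at $c_{v,i}^{(q+1)}$, $\norm{c_{v,i}^{(q+1)}}=R_{q+1}$, and the child objects centered at $c_{vi',i}^{(q)}$, $\norm{c_{vi',i}^{(q)}}=R_q$. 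Since $R_q=K2^{q-2}w_q$ with $K\ge384$ and $R_{q+1}=6R_q$, the set
\[
\Omega=\{u:\norm{u}\le R_q/2\}
\]
satisfies $\norm{u-c}\ge R_q/2>3w_q$ for child centers, and $\norm{u-c}>3w_{q+1}$ for parent centers.

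On $\Omega^8$ the parent ports $\pi_{v,i}$ vanish with zero complete jet, by \eqref{eq:loc-port}. So $\omega_{v,1}\equiv1$ and all other parent weights are flat zero. By Lemma~\ref{lem:localized-bounds} the parent self terms reduce to $P_{R_{q+1}}(u_i)$. Inside the child $v1$ the same reasoning applies: its ports are flat zero, its first weight is one, and its self terms are $P_{R_q}$. Its output port $o_{v1}=\pi_{v1,8}$ is zero on $\Omega$, so the incoming term $B_{v,1}o_{v1}$ and its derivatives vanish identically. Up to an additive constant, the restricted objective on $\Omega^8\times\{\text{lower}\}$ is then
\[
\Phi=F_{v11}^{(q-1)}-B_{v1,1}(u_1)\,o_{v11}+\sum_{i=1}^8\bigl[P_{R_q}(u_i)+P_{R_{q+1}}(u_i)\bigr],
\]
where $F_{v11}^{(q-1)}$ and $o_{v11}$ depend only on the separate lower variables. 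For $i\ge2$ I take $u_i=0$, where the gradients of the $P$ terms vanish.

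It remains to find a critical point of $\Phi$ in $(u_1,\text{lower})$ with $u_1$ interior to $\Omega$. I would minimize $\Phi$ over $\{\norm{u_1}\le R_q/2\}\times\R^{\rm lower}$. The minimum exists because $F_{v11}^{(q-1)}$ is coercive by Lemma~\ref{lem:hierarchy-analytic}(ii), $o_{v11}\in[0,1]$, and $0\le B_{v1,1}\le768R_q$ by \eqref{eq:loc-values}. Any minimizer is stationary in the lower variables. If $u_1$ is interior, it is a genuine stationary point of the restricted local objective at which all eight parent ports are zero. Nothing here used the values of the hidden centers or descendants, so this gives the claim for every choice. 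Note that the grandchild port $o_{v11}$ may well equal one there; the proposition only concerns parent ports, which is exactly why the certificate fails.

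The main obstacle is excluding a boundary minimizer in $u_1$. On $\Omega$ the support of $\psi$ is far away, so $\nabla B_{v1,1}=\nabla P_{R_q}-\nabla G_c$. For $\norm{u_1}\le R_q/2$, $\nabla G_c$ points from $c=R_qv$ toward $u_1$ and has norm close to $16$. I would first try to show that $\ip{\nabla_{u_1}\Phi}{u_1}>0$ on the sphere $\norm{u_1}=R_q/2$ whenever $o_{v11}\in[0,1]$. This needs an explicit comparison: the outward baseline force $(1+o)\nabla P_{R_q}+\nabla P_{R_{q+1}}$, of size about $16/\sqrt5(1+o)+16/\sqrt{145}$, against the radial component of $o\nabla G_c$. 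If the constants are too tight, the fallback is to solve the $u_1$ equation by a one-dimensional fixed point along the ray $-v$. With $o$ fixed, stationarity in $u_1$ is approximately $u_1\approx-o\,v\cdot R_{\rm eff}$ with $R_{\rm eff}<R_q$, on the side opposite to $c$. I would then enlarge $\Omega$ to a half-space-shaped region away from $c$, which still avoids all parent centers because $R_{q+1}-R_q\gg w_{q+1}$.
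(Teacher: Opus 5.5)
\textbf{There is a genuine gap.} It is more serious than the boundary estimate you flag: the region you chose contains no stationary point at all.

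At any critical point of your $\Phi$, stationarity in the lower variables reads $\nabla\bigl(F_{v11}^{(q-1)}-\lambda\,o_{v11}\bigr)=0$. Here $\lambda=B_{v1,1}(u_1)\ge0$ is a constant in those variables. The node $v11$ has height $q-1\ge2$, and none of its variables are identified. So Lemma~\ref{lem:hierarchy-analytic}(iii) applies and forces $o_{v11}=1$; this is not optional but a consequence of stationarity. On $\Omega$, the child perturbation $\psi_c$ with $c=c_{v1,1}^{(q)}$ vanishes. By \eqref{eq:loc-cancel}, the $u_1$-dependent part of $\Phi$ is therefore $(1-o_{v11})P_{R_q}+o_{v11}G_c+P_{R_{q+1}}$; the coefficient is $1-o$, not $1+o$. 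With $o_{v11}=1$, $\norm{u_1}\le R_q/2$ and $R_q/w_q\ge384$,
\[
\norm{\nabla_{u_1}\Phi}\ge\frac{16\,(R_q/2)}{\sqrt{w_q^2+R_q^2/4}}-\frac{16}{\sqrt{145}}>14 .
\]
Hence the minimizer over $\Omega$ and all lower variables sits on $\partial\Omega$ and is not critical.

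Your fallback also points the wrong way. The term $o\,G_c$ pulls $u_1$ toward $c$, not away from it. Since $\nabla G_c\propto u_1-c$ and $\nabla P_{R_{q+1}}\propto u_1$, the two can cancel only on the segment from $0$ to $c$, at distance below $w_q/6$ from $c$. That is inside the child's completion ball, where your formula for $\Phi$ no longer describes the objective.

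\textbf{The missing idea is to complete the child rather than avoid it.} The paper places each $u_i$ in the completion ball of the child center $a_i=c^{(q)}_{v1,i}$. Then every child port is flat one, and $v1$ selects its terminal payload $-8S_q$. The grandchildren disappear, and the lower variables enter only through $Q_{q-1}$, which is stationary at $0$.

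Because $o_{v1}=1$ and the parent ports vanish, \eqref{eq:loc-cancel} turns the parent terms $V_{b_1}-B_{b_1}$, with $b_i=c^{(q+1)}_{v,i}$, into $G_{b_1}$ plus a constant. Its gradient has norm below $16$, whereas the child well $K_{a_i}$ has gradient $16(u-a_i)/w_q$. Consequently the strongly convex functions $K_{a_1}+G_{b_1}$ and $K_{a_i}+P_{R_{q+1}}$ ($i\ge2$) have minimizers $u_i^*$ with $\norm{u_i^*-a_i}<w_q$. There the restricted objective coincides with these functions and all child ports stay flat one. Meanwhile $\norm{u_i^*-b_i}\ge5R_q-w_q>3w_{q+1}$ keeps every parent self perturbation and port flat zero.

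The stiffness of the lower well against the bounded slope of $G_{b_1}$ is exactly what a near-origin placement cannot supply.
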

\begin{proof}
Write $R=R_q$, $w=w_q$; the parent scales are $6R,3w$.  Let $a_i$ be the
lower centers in the parent node's first child and $b_i$ the parent centers,
so $\norm{a_i}=R$ and $\norm{b_i}=6R$.  Inside the lower completion balls,
the lower self potential is
\[
 K_{a_i}(u)=P_R(a_i)-160w+\frac8w\norm{u-a_i}^2.
\]
Let $u_1^*$ minimize $K_{a_1}+G_{b_1}$, where $G_{b_1}$ uses width $3w$,
and for $2\le i\le8$ let $u_i^*$ minimize $K_{a_i}+P_{6R}$.  Each function
is strongly convex, so the minimizer is unique.  The nonquadratic gradient
has norm strictly less than $16$, while
$\nabla K_{a_i}=16(u-a_i)/w$; hence
$\norm{u_i^*-a_i}<w$.  All lower completion ports are therefore flat one.
But
\[
 \norm{u_i^*-b_i}\ge5R-w>9w,
\]
so every parent self perturbation and completion port is flat zero in a
neighborhood.

With the lower variables denoted by $Y$, the restricted scalar on this
neighborhood is
\[
 Q_{q-1}(Y)+\text{constant}
 +K_{a_1}(u_1)+G_{b_1}(u_1)
 +\sum_{i=2}^8\{K_{a_i}(u_i)+P_{6R}(u_i)\}.
\]
Identity \eqref{eq:loc-cancel} was used for the first parent block.  At
$Y=0$ and $u=u^*$ every displayed gradient vanishes, while every parent port
is zero.  The diagonal restriction preserves coercivity and changes the
Hessian-Lipschitz constant only by an absolute operator-norm factor.
\end{proof}

\end{document}